\documentclass{amsart}

\usepackage{amssymb}
\usepackage{graphicx}
\usepackage{amsthm}
\usepackage{parskip}
\usepackage{float}
\usepackage{tikz}

\newcommand\ZZ{\mathbb{Z}}
\newcommand\NN{\mathbb{N}}
\newcommand\CC{\mathcal{C}}

\newcommand\QQ{\mathbb{Q}}
\newcommand\QQP{\mathbb{Q}(\sqrt{-p})}

\newcommand\Anz{A_{2^{n}}}
\newcommand\Ano{A_{2^{n-1}}}
\newcommand\Ant{A_{2^{n-2}}}
\newcommand\Hnz{H_{2^{n}}}
\newcommand\Hno{H_{2^{n-1}}}
\newcommand\Hnt{H_{2^{n-2}}}

\newcommand\DD{\mathcal{D}}

\newcommand\ow{\overline{w}}
\newcommand\oz{\overline{z}}

\newcommand\mm{\mathfrak{m}}
\newcommand\pp{\mathfrak{p}}

\newcommand\pt{\mathfrak{t}}
\newcommand\zz{\mathfrak{Z}}
\newcommand\aaa{\mathfrak{a}}

\newcommand\Gal{\text{Gal}}

\newcommand\Imag{\text{Im}}
\newcommand\Real{\text{Re}}

\theoremstyle{theorem} \newtheorem{theorem}{Theorem}
\theoremstyle{theorem} \newtheorem{lemma}{Lemma}
\theoremstyle{theorem} \newtheorem{corollary}{Corollary}
\theoremstyle{theorem} \newtheorem{conjecture}{Conjecture}
\theoremstyle{theorem} \newtheorem*{hypoR}{Hypothesis (R)}
\theoremstyle{theorem} \newtheorem*{hypoB}{Hypothesis (B)}
\theoremstyle{theorem}\newtheorem{prop}{Proposition}
\theoremstyle{remark} \newtheorem*{remark}{Remark}
\theoremstyle{remark} 
\theoremstyle{remark} 
\theoremstyle{remark} \newtheorem*{example}{Example}

\DeclareMathOperator*{\sumsum}{\sum\sum}

\title{The infinitude of $\QQP$ with class number divisible by $16$}
\author{Djordjo Milovic}
\address{D{\a'e}partement de Math{\a'e}matiques \newline B{\a^a}timent 425 \newline Facult{\a'e} des Sciences d'Orsay \newline Universit{\a'e} Paris-Sud \newline F-91405 Orsay Cedex \newline France
 \newline\newline Mathematisch Instituut\newline Universiteit Leiden \newline Niels Bohrweg 1\newline 2333 CA Leiden \newline The Netherlands}

\begin{document}
\maketitle
\let\thefootnote\relax\footnote{Partially supported by an ALGANT Erasmus Mundus Scholarship.}

\begin{abstract}
The density of primes $p$ such that the class number $h$ of $\QQP$ is divisible by $2^k$ is conjectured to be $2^{-k}$ for all positive integers $k$. The conjecture is true for $1\leq k\leq 3$ but still open for $k\geq 4$. For primes $p$ of the form $p = a^2 + c^4$ with $c$ even, we describe the 8-Hilbert class field of $\QQP$ in terms of $a$ and $c$. We then adapt a theorem of Friedlander and Iwaniec to show that there are infinitely many primes $p$ for which $h$ is divisible by $16$, and also infinitely many primes $p$ for which $h$ is divisible by $8$ but not by $16$.
\end{abstract}

\section{Introduction}
Let $p$ be a prime number, and let $\CC$ and $h$ be the class group and the class number of $\QQP$, respectively. Since the discriminant of this field is either $-p$ or $-4p$, Gauss's genus theory implies that the $2$-part of $\CC$ is cyclic, and so the structure of the $2$-part of the class group is entirely determined by the highest power of $2$ dividing $h$. More precisely, Gauss's genus theory implies that   
$$2|h \Longleftrightarrow p \equiv 1 \bmod{4}.$$
The criterion
$$4|h \Longleftrightarrow p \equiv 1 \bmod{8}$$
can be deduced easily from R\'{e}dei's work on the $4$-rank of quadratic number fields \cite{Redei}. In \cite{BarCohn}, Barrucand and Cohn gave an explicit criterion for divisibility by $8$ by successively extracting square roots of the class of order two. It states that
$$8|h\Longleftrightarrow p = x^2 + 32y^2\text{ for some integers } x \text{ and }y.$$
This can be restated as
\begin{equation}\label{div8}
8|h\Longleftrightarrow p \equiv 1 \bmod{8} \text{ and } 1+i \text{ is a square modulo }p
\end{equation}
where $i$ is a square root of $-1$ modulo $p$ (see \cite[(10), p.68]{BarCohn}). In \cite{Ste1}, Stevenhagen also obtained the criterion \eqref{div8}, albeit by a more abstract argument using class field theory over the field $\QQ(i)$.
\\\\
Given a subset $S$ of the prime numbers, and a  real number $X\geq 2$, define 
$$R(S, X) := \frac{\#\{p\leq X\text{ prime }: p\in S\}}{\#\{p\leq X\text{ prime }\}}.$$
If the limit $\lim_{X\rightarrow\infty}R(S, X)$ exists, we denote it by $\rho(S)$ and call it the \textit{natural density of} $S$. Let
$$S(n) = \left\{p\text{ prime }:\ n|h(-p) \right\};$$
here we write $h(-p)$ for the class number of $\QQP$ to emphasize its dependence on $p$. From the above, it is clear that $\rho(S(2)) = 1/2$ and $\rho(S(4)) = 1/4$. From \eqref{div8}, we see that $8$ divides $h$ if and only if $p$ splits completely in $\QQ(\zeta_8, \sqrt{1+i})$, where $\zeta_8$ is a primitive $8^{\text{th}}$ root of unity. Since this is a degree $8$ extension of $\QQ$, \v{C}ebotarev's density theorem implies that $\rho(S(8)) = 1/8$. For a discussion of these and similar density results, see \cite[p.16-19]{Ste2}.
\\\\
The Cohen-Lenstra heuristics \cite{CohenLenstra} can be adapted to this situation to predict the density of primes $p$ such that $2^k$ divides $h$ for $k\geq 1$. Cohen and Lenstra stipulate that an abelian group $G$ occurs as the class group of an imaginary quadratic field with probability proportional to the inverse of the size of the automorphism group of $G$. Under this assumption, the cyclic group of order $2^{k-1}$ would occur as the $2$-part of the class group of an imaginary quadratic number field twice as often as the cyclic group of order $2^k$. As we just saw above, $\rho(S(2^k)) = \frac{1}{2}\rho(S(2^{k-1}))$ for $k\leq 3$, so we are led to conjecture
\begin{conjecture}
For all $k\geq 1$, $\lim_{X\rightarrow \infty}R(S(2^k), X)$ exists and is equal to $2^{-k}$.
\end{conjecture}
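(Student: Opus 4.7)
The plan is to generalize the \v{C}ebotarev-based arguments that succeed for $k \leq 3$. For those $k$, the criterion $2^k \mid h(-p)$ reduces to complete splitting of $p$ in a fixed finite Galois extension $L_k/\QQ$, namely $L_1 = \QQ(i)$, $L_2 = \QQ(\zeta_8)$, and $L_3 = \QQ(\zeta_8, \sqrt{1+i})$, each of degree $2^k$ over $\QQ$, and then \v{C}ebotarev immediately yields the density $2^{-k}$. The obvious first attempt is to construct analogous ``governing fields'' $L_k$ of degree $2^k$ for all $k \geq 4$, after which the same argument would close the conjecture.

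This first attempt is believed not to work: for $k \geq 4$ the condition $2^k \mid h(-p)$ should be genuinely $p$-dependent, since the relevant square roots must be extracted inside the $8$-Hilbert class field of $\QQP$, whose structure varies with $p$. A more realistic route is sieve-theoretic and mirrors the infinitude result obtained in this paper. Using the explicit description of the $8$-Hilbert class field for primes of the form $p = a^2 + c^4$ with $c$ even (promised in the abstract), one would rephrase $16 \mid h(-p)$ as congruence plus character conditions on $a$ and $c$. A Friedlander--Iwaniec type asymptotic for primes $a^2 + c^4$ subject to these auxiliary conditions then handles this thin family, and one would need to show that the conditions cut its contribution down by the correct factor of $2$, rather than merely leaving it positive.

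The main obstacle, and the reason the conjecture remains open past $k = 3$, is to pass from such sparse families to all primes and to iterate the argument in $k$. Concretely, one needs an equidistribution statement of the form: among primes $p \leq X$ with $2^{k-1} \mid h(-p)$, exactly half (in the limit) should also satisfy $2^k \mid h(-p)$, for each $k \geq 4$. This translates into nontrivial cancellation in bilinear forms of Jacobi symbols with prime variables, uniform over arbitrarily many twists --- a problem far beyond the Friedlander--Iwaniec identity as used in this paper, which suffices only to produce a positive lower bound. I expect a full proof of the conjecture will require Smith-type equidistribution results for $2$-Selmer structures, which seem to be the only presently available tool for controlling higher $2$-power torsion statistics of class groups in the absence of a finite governing field.
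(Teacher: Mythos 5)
The statement you were asked to prove is a \emph{conjecture}, and the paper does not prove it --- the abstract and the surrounding discussion state explicitly that it is open for $k \geq 4$, and the paper's actual theorems (Theorem 1, Proposition 1, and their corollaries) give only a lower bound of order $X^{3/4}/\log X$ for the counts of primes $p$ with $16 \mid h_{-4p}$ and with $8 \| h_{-4p}$, restricted to the thin family $p = a^2 + c^4$ with $c$ even. There is therefore no proof of the conjecture in the paper to compare your attempt against.

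With that caveat, your discussion is accurate and matches the paper's framing. You correctly identify the \v{C}ebotarev mechanism behind the $k \leq 3$ cases (via the governing fields $\QQ(i)$, $\QQ(\zeta_8)$, $\QQ(\zeta_8, \sqrt{1+i})$), correctly note that for $k \geq 4$ no fixed governing field over $\QQ$ is available and that the relevant square roots must be taken in a field varying with $p$, and you accurately summarize the paper's strategy of restricting to $p = a^2 + c^4$ where the $8$-Hilbert class field can be written explicitly and Friedlander--Iwaniec applies. You also correctly observe that this sieve argument yields only a positive lower bound within a density-zero subfamily and cannot give the density $2^{-k}$ over all primes; the paper makes the same observation (``our methods cannot be used to tackle Conjecture 1''). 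Your remark that Smith-type equidistribution for $2$-Selmer structures is the most promising route to the full conjecture is a reasonable forward-looking comment, though it postdates this paper and does not appear in it; the paper itself offers only the Cohen--Lenstra heuristic and the within-family density ($c(16,4) = 1/32$, each of the eight admissible classes of $a$ splitting evenly across the four cases of Theorem 1) as evidence for $k = 4$. In short, you correctly recognized that no proof exists and described the obstruction and the partial progress faithfully.
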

While Conjecture 1 is true for $k\leq 3$, it has not been proven for any $k\geq 4$, and a proof along the lines of the arguments for $k\leq 3$ seems out of reach (see \cite[p. 16]{Ste2}). Although several criteria for divisibility by $16$ have been found already (see \cite{Kaplan77}, \cite{W81}, and \cite{LW82}), none of them appear to be sufficient to produce even infinitely many primes $p$ for which the class number of $\QQP$ is divisible by $16$. This is precisely our aim in this paper -- we will show that there is an infinite number of primes $p$ for which $16|h$ and also an infinite number of primes $p$ for which $8|h$ but $16\nmid h$. We also derive some consequences for the fundamental unit $\epsilon_p$ of the real quadratic number field $\QQ(\sqrt{p})$. 
\\\\
We tackle the question of infinitude not by developing a new criterion for divisibility by $16$ which handles all primes, but by focusing on a very special subset of primes. These are the primes of the form 
\begin{equation}\label{pcForm}
p = a^2 + c^4,\ \ \ \ c \text{  even}.
\end{equation}
The main theorem that we prove gives a new and very explicit criterion for divisibility by $16$ of class numbers of $\QQP$ for $p$ of the form \eqref{pcForm}.
\begin{theorem}
Suppose $p$ is a prime of the form $a^2+c^4$, where $a$ and $c$ are integers. Let $h_{-4p}$ denote the class number of $\QQ(\sqrt{-p})$.
\\\\
(i) If $a \equiv \pm 1 \bmod{16}$ and $c \equiv 0 \bmod{4}$, then $h_{-4p} \equiv 0 \bmod{16}$.
\\\\
(ii) If $a \equiv \pm 3 \bmod{16}$ and $c \equiv 2 \bmod{4}$, then $h_{-4p} \equiv 0 \bmod{16}$.
\\\\
(iii) If $a \equiv \pm 7 \bmod{16}$ and $c \equiv 0 \bmod{4}$, then $h_{-4p} \equiv 8 \bmod{16}$.
\\\\
(iv) If $a \equiv \pm 5 \bmod{16}$ and $c \equiv 2 \bmod{4}$, then $h_{-4p} \equiv 8 \bmod{16}$.
\end{theorem}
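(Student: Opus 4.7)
The plan is to construct the $8$-Hilbert class field $H_8$ of $K=\QQP$ explicitly as a tower of square-root extensions, and then translate the existence of a further unramified quadratic extension of $H_8$ inside the Hilbert class field of $K$---which, because the $2$-part of $\CC$ is cyclic, is exactly the condition $16 \mid h_{-4p}$---into a congruence condition on $a$ and $c$. The starting point is the factorization $p=(a+ic^2)(a-ic^2)=\pi\bar\pi$ in $\ZZ[i]$, exploited inside the biquadratic field $L=K(i)=\QQ(i,\sqrt{p})$.

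First I would verify that $8\mid h_{-4p}$ already holds in all four cases, via \eqref{div8}. Since $c$ is even and $a$ is odd, $p\equiv 1 \pmod 8$. Moreover $(a/c^2)^2\equiv -1\pmod p$, so $i\equiv\pm a/c^2\pmod p$, and hence $1+i$ is a quadratic residue mod $p$ iff $c^2+a$ is, which by quadratic reciprocity reduces to the condition $c^2+a\equiv\pm 1\pmod 8$; this is verified directly in each of (i)--(iv). Thus $H_8$ exists as a cyclic unramified degree-$8$ extension of $K$ in every case under consideration.

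Next I would build $H_8$ explicitly as a tower $K\subset L\subset H_4\subset H_8$, where $L$ is the genus field. In $\OO_L$ one has $(\pi)=\pp^2$ and $(\bar\pi)=\bar\pp^2$ for primes $\pp,\bar\pp$ above $p$, so $L(\sqrt{\pi})/L$ is unramified outside $2$. A local computation at the prime of $L$ above $2$---using that $\pi\equiv a\pmod{16\ZZ[i]}$ when $c\equiv 0\pmod 4$ and $\pi\equiv a+4i\pmod{16\ZZ[i]}$ when $c\equiv 2\pmod 4$---should show that $L(\sqrt{\pi})/L$ is in fact everywhere unramified and that $L(\sqrt{\pi})/K$ is cyclic of degree $4$, so $H_4=L(\sqrt{\pi})$. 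Iterating this by extracting a square root of a suitable unit multiple of $\pi$ inside $H_4$, which is possible precisely because $8\mid h$, produces $H_8$ and a distinguished generator $\alpha \in H_4$ with $H_8=H_4(\sqrt{\alpha})$.

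The heart of the argument is then the Kummer-theoretic test for $16\mid h_{-4p}$: this holds iff $\alpha$ is a square in the completion of $H_8$ at every prime---equivalently, iff a specific octic residue symbol involving $\pi$ and an auxiliary prime of $L$ above $2$ takes the trivial value. I expect the main obstacle to lie here: expressing this symbol purely in terms of $a \bmod 16$ and $c \bmod 4$ will require both the supplementary laws of biquadratic reciprocity in $\ZZ[i]$ applied to $\pi=a+ic^2$, and a careful $2$-adic analysis separating the cases $c\equiv 0\pmod 4$ and $c\equiv 2\pmod 4$. Carrying out this reduction should yield exactly the four-case dichotomy stated in the theorem.
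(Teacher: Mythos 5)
Your high-level plan---build $H_8$ explicitly and reduce $16\mid h$ to a local condition---matches the paper's, but the reduction you state is wrong, and the construction of $H_8$ is left at a point where it would not go through. The test for $16\mid h_{-4p}$ is not ``$\alpha$ is a square in every completion of $H_8$'': since $H_8=H_4(\sqrt{\alpha})$, the element $\alpha$ is trivially a square in $H_8$ and hence in all its completions, so that condition is vacuous. The structural fact the paper uses is that genus theory makes $[\pt]$, with $\pt=(2,1+\sqrt{-p})$, the unique element of order $2$ in $\CC$, and since the $2$-part of $\CC$ is cyclic, $2^{n+1}\mid h$ iff $[\pt]\in\CC^{2^n}$ iff $\pt$ splits completely in $H_{2^n}$. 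Thus $16\mid h$ is a local condition at the single prime $(1+i)$ of $\QQ(i)$: namely whether $\alpha$ is a square in $\QQ_2(i)$, which by Lemma~\ref{ptunram} is just the congruence $\alpha\equiv\pm1\pmod{\mm^5}$. No biquadratic or octic reciprocity enters anywhere; once the criterion is stated correctly the whole theorem reduces to elementary $2$-adic congruences modulo powers of $(1+i)$, and the ``main obstacle'' you anticipate dissolves.

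The second gap is the construction of $H_8$ itself. ``Extracting a square root of a suitable unit multiple of $\pi$ inside $H_4$'' is not the right move: the correct generator is neither a unit multiple of $\pi$ nor of $\sqrt{\pi}$. The paper takes $\alpha=\varpi_0=c(1+i)+\sqrt{\pi}\in H_4$, a prime element above $p$, and the identity $\varpi_0\cdot\sigma^2(\varpi_0)=(c(1+i))^2-\pi=2ic^2-(a+ic^2)=-\overline{\pi}$ is what makes $H_4(\sqrt{\varpi_0})$ normal, cyclic of degree $8$ over $\QQP$, and unramified at $p$ (checked against the general passage-lemma, Lemma~\ref{lemH2n}). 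This identity hinges on $b=c^2$, i.e.\ on the special form $p=a^2+c^4$; for a general $p$ with $8\mid h$ one cannot write down such an element explicitly, which is precisely why the paper restricts to this thin family. Without identifying $\varpi_0$---or at least specifying an element of $H_4$ whose norm to $\QQ(i)$ is $\pm\overline{\pi}$---your plan cannot reach the four congruence conditions of the theorem.
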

Once we prove Theorem 1, the infinitude of primes $p$ of the form as in the statements $(i)-(iv)$ of the theorem follows from the following generalization of a powerful theorem of Friedlander and Iwaniec (see \cite[Theorem 1]{FI1}):
\begin{prop}
Let $a_0\in\{1, 3, 5, 7, 9, 11, 13, 15\}$ and $c_0\in\{0, 2\}$. Then, uniformly for $X\geq 3$, we have the equality
\begin{equation}\label{prop1eq}
\sumsum_{\substack{a^2+c^4\leq X \\ a\equiv a_0\bmod{16} \\ c\equiv c_0\bmod{4} \\ a^2+c^4\text{ prime}}}1 = \frac{\kappa}{2\pi} \frac{X^{3/4}}{\log X}\left(1+O\left(\frac{\log\log X}{\log X}\right)\right),\end{equation}
where $a$ and $c$ run over all integers and 
$$\kappa = \int_0^1(1-t^4)^{\frac{1}{2}}dt\approx 0.874\ldots.$$
In particular, there exist infinitely many primes of the form $a^2+c^4$ with $a\equiv a_0\bmod{16}$ and $c\equiv c_0\bmod{4}$.
\end{prop}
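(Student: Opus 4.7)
The plan is to adapt Friedlander and Iwaniec's proof \cite{FI1} of the infinitude of primes of the form $a^2+b^4$, keeping track of the congruence conditions $a \equiv a_0 \bmod 16$ and $c \equiv c_0 \bmod 4$ throughout. The crucial observation is that these two conditions combine into a single congruence on the Gaussian integer $\alpha = a + ic^2 \in \ZZ[i]$, because the residue of $c^2$ modulo $16$ is completely determined by $c \bmod 4$ (namely $c^2 \equiv 0 \bmod 16$ when $c \equiv 0 \bmod 4$, and $c^2 \equiv 4 \bmod 16$ when $c \equiv 2 \bmod 4$). Thus we are restricting to a single residue class $\alpha \equiv \alpha_0 \bmod 16$ in $\ZZ[i]/16\ZZ[i]$, with representative $\alpha_0 = a_0 + ic_0^2$. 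Because this modulus is a fixed constant independent of $X$, such a restriction should cost only a bounded factor in the main term while leaving the error term of the same quality.

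The Friedlander--Iwaniec machinery applies their \emph{asymptotic sieve for primes} to the sequence $\mathcal{A} = (a_n)$, where $a_n$ counts representations $n = a^2 + c^4$ with $c$ even. To invoke the sieve one must verify a Type I hypothesis (equidistribution of $\sum_{n \equiv 0 \bmod q} a_n$ on average over $q$ up to some level of distribution) and a Type II hypothesis (cancellation in a bilinear form $\sum_m \sum_n \alpha_m \beta_n c_{mn}$ indexed by Gaussian divisors of $n$). My approach is to replace $\mathcal{A}$ by the subsequence $\mathcal{A}_{\alpha_0}$ obtained by inserting the condition $a + ic^2 \equiv \alpha_0 \bmod 16$, and then to reverify both hypotheses.

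For the Type I estimate, the restriction $\alpha \equiv \alpha_0 \bmod 16$ amounts to confining $(a,c^2)$ to a fixed sublattice of $\ZZ^2$ of bounded index; its indicator can be expanded via characters on $\ZZ[i]/16\ZZ[i]$, and the original Friedlander--Iwaniec estimate applies to each character twist, yielding the same level of distribution with the main term simply scaled by the density of the chosen class. For the Type II estimate, one repeats the original bilinear-form analysis while separating by characters mod $16$; because the modulus is bounded, this produces at most a bounded number of terms, each of the same strength as in the unrestricted case. The main obstacle is this last step: the Friedlander--Iwaniec bilinear sum argument is delicate, using subtle manipulations of divisor sums in $\ZZ[i]$, and inserting the congruence on $\alpha$ forces one to check that the passage to multiplicative characters and subsequent cancellation estimates all remain uniform in the fixed modulus $16$.

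Once these two hypotheses are in place, the asymptotic sieve produces an asymptotic of the form $c_{a_0,c_0} X^{3/4}/\log X$ with the constant determined by the density of $\alpha_0 + 16\ZZ[i]$ inside the ambient sequence. To recover the stated value $\kappa/(2\pi)$, one keeps track of the four sign symmetries $(\pm a, \pm c)$ (each giving a distinct representation of a given odd prime), the $8 \cdot 2 = 16$ admissible pairs $(a_0,c_0)$, and the normalization of the original Friedlander--Iwaniec main term; summing over all such pairs recovers the full count of primes $p = a^2 + c^4 \leq X$ with $c$ even, $a$ odd, signs included, and matching the two expressions pins down the constant. Positivity of $\kappa/(2\pi) > 0$ then immediately yields the infinitude of primes in each of the sixteen classes.
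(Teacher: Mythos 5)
Your high-level strategy---adapting the Friedlander--Iwaniec asymptotic sieve to a modified sequence and reverifying conditions (R) and (B)---is exactly what the paper does. Your repackaging observation is also valid: because $c$ is forced to be even when $c_0 \in \{0,2\}$, the condition $c \equiv c_0 \bmod 4$ is equivalent to $c^2 \equiv c_0^2 \bmod 16$, and the two congruences indeed merge into one Gaussian congruence $a+ic^2 \equiv \alpha_0 \bmod 16$. However, there is a genuine gap in how you propose to handle the Type~II estimate.

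In the bilinear-form analysis, one factors $mn = |w|^2|z|^2$ and writes $a = \Imag\,\overline{w}z$, $b = \Real\,\overline{w}z$. The $a$-condition is eliminated by restricting the support of $\alpha_w$ and $\beta_z$ to fixed residue classes modulo $q_1$ and $64q_1$, which fixes $\Imag\,\overline{w}z \bmod q_1$. But the next step is a Cauchy--Schwarz inequality that \emph{drops} the coefficients $\alpha_w$ and leaves the sum over $w$ unrestricted against a smooth radial majorant. At that point any residue restriction on $w$ is gone, so you cannot detect the condition $\Real\,\overline{w}z \equiv c_0^2 \bmod 16$ by ``separating into characters mod $16$'' in the way your proposal suggests: the $c$-condition lives inside the weight $\mathfrak{Z}'(\Real\,\overline{w}z)$ and has to be carried through the Poisson summation that parametrizes $w$ by the pair $(c_1, c_2)$ of square roots. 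This is precisely where the paper is forced to replace the Friedlander--Iwaniec arithmetic sum $G(h_1,h_2)$ by a modified version $G'(h_1,h_2)$ incorporating the residue restrictions $\gamma_1 \equiv \gamma_2 \equiv c_0 \bmod \delta$, to reprove the bound $|G'(h_1,h_2)| \ll \tau_3(\Delta)|\Delta|^{-1}(z_1h_1^2 - z_2h_2^2, \Delta)$, and to recompute $G'_0(z_1,z_2)$ via a new count $N'(a;r)/r$. None of this is covered by the phrase ``each of the same strength as in the unrestricted case''; it is exactly the technical work that makes the $c$-condition harder than the $a$-condition, and it is why the paper restricts to $q_2 = 4$ rather than treating general moduli.

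A smaller issue: your constant-matching argument claims that summing the sixteen admissible $(a_0, c_0)$ classes recovers the full Friedlander--Iwaniec count, but those sixteen classes only cover representations with $a$ odd and $c$ even, whereas the original theorem also counts the complementary representations with $a$ even and $c$ odd. The correct constant comes directly out of the sieve's local density factor $c(q_1,q_2) = \frac{1}{q_1 q_2}\prod_{p\mid q}(1-g(p))^{-1} = \frac{1}{32}$, combined with the Friedlander--Iwaniec main term $16\kappa/\pi$; the symmetry count you describe would need substantial additional justification to reproduce it.
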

Theorem 1 and Proposition 1 immediately imply:
\begin{corollary}\label{COR1}
For a prime $p$, let $h_{-4p}$ denote the class number of $\QQ(\sqrt{-p})$.
Then, for sufficiently large $X$, we have 
$$\#\{p\leq X: h_{-4p}\equiv 0\bmod{16}\}\geq \frac{X^{3/4}}{4\log X}$$
and 
$$\#\{p\leq X: h_{-4p}\equiv 8\bmod{16}\}\geq \frac{X^{3/4}}{4\log X}.$$
\end{corollary}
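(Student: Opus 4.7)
The corollary follows by pasting together the arithmetic criterion of Theorem~1 with the analytic count of Proposition~1; I would treat the two inequalities in parallel, as they are symmetric.

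For the first inequality, Theorem~1 tells us that every prime $p=a^2+c^4\leq X$ whose representation satisfies either ($a\equiv \pm 1\bmod 16$, $c\equiv 0\bmod 4$) or ($a\equiv \pm 3\bmod 16$, $c\equiv 2\bmod 4$) has $h_{-4p}\equiv 0\bmod 16$. These conditions correspond exactly to the four residue classes $(a_0,c_0)\in\{(1,0),(15,0),(3,2),(13,2)\}$ in the notation of Proposition~1. Applying Proposition~1 to each of these four classes and summing the asymptotic counts, I would obtain a lower bound of order $X^{3/4}/\log X$ on the number of integer pairs $(a,c)\in\ZZ^2$ with $a^2+c^4\leq X$ a prime satisfying $h_{-4(a^2+c^4)}\equiv 0\bmod 16$. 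Translating this pair count into a prime count requires the elementary observation that each prime $p=a^2+c^4$ with $|a|,|c|\geq 1$ admits four integer representations $(\pm|a|,\pm|c|)$, and that these representations distribute uniformly across the listed residue classes (because $c\equiv c_0\bmod 4$ with $c_0\in\{0,2\}$ is invariant under $c\mapsto -c$, while the two conditions $a\equiv \pm a_0\bmod 16$ together are invariant under $a\mapsto -a$). Combining these ingredients yields a lower bound of the desired shape $\geq X^{3/4}/(4\log X)$ for all sufficiently large $X$.

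The second inequality is obtained in exactly the same way, substituting cases (iii)--(iv) of Theorem~1 for cases (i)--(ii), and the residue classes $\{(7,0),(9,0),(5,2),(11,2)\}$ for the ones used above. Since Theorem~1 and Proposition~1 are already in hand, there is no substantive obstacle in this argument; the only mildly technical point is the correct multiplicity accounting above, which determines the explicit constant $1/4$ appearing in the lower bound. The rest is a short numerical verification that $4\cdot \frac{\kappa}{2\pi}$, divided by the pair-to-prime multiplicity factor, comfortably exceeds $1/4$ once the error term in Proposition~1 is absorbed into the main term for $X$ sufficiently large.
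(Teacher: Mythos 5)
Your approach---pasting Theorem~1 onto Proposition~1 and tracking representation multiplicities---is exactly the one the paper implicitly uses (the paper offers no separate proof, saying only that Theorem~1 and Proposition~1 ``immediately imply'' the corollary). The architecture of your argument is therefore correct, and you are right that the only delicate point is the multiplicity bookkeeping. However, that bookkeeping does not come out the way you claim. A prime $p=a_1^2+c_1^4$ (with $a_1,c_1>0$, $a_1$ odd, $c_1$ even) has exactly the four integer representations $(\pm a_1,\pm c_1)$, and because $c\equiv -c\pmod 4$ when $c$ is even, those four lie in exactly two of your four listed residue classes, with multiplicity two in each: both $(a_1,\pm c_1)$ fall into the class $(a_1\bmod 16,\,c_1\bmod 4)$ and both $(-a_1,\pm c_1)$ into $(-a_1\bmod 16,\,c_1\bmod 4)$. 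The other two classes receive nothing from $p$. Thus the total pair count over all four classes is $4\bigl(N_{\mathrm{(i)}}(X)+N_{\mathrm{(ii)}}(X)\bigr)$, while Proposition~1 gives that this total is
\[
4\cdot\frac{\kappa}{2\pi}\frac{X^{3/4}}{\log X}\bigl(1+o(1)\bigr),
\]
so
\[
N_{\mathrm{(i)}}(X)+N_{\mathrm{(ii)}}(X)=\frac{\kappa}{2\pi}\frac{X^{3/4}}{\log X}\bigl(1+o(1)\bigr)\approx 0.139\,\frac{X^{3/4}}{\log X}.
\]
Since $\kappa/(2\pi)\approx 0.139<1/4$, your closing claim that ``$4\cdot\frac{\kappa}{2\pi}$, divided by the pair-to-prime multiplicity factor, comfortably exceeds $1/4$'' is numerically false: that quotient is $\kappa/(2\pi)$, and it falls short of $1/4$. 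The phenomenon you are proving is genuine---$\gg X^{3/4}/\log X$ primes in each congruence case---but the explicit constant $1/4$ cannot be extracted from Theorem~1 and Proposition~1 by this (or any) accounting. A safe constant would be, say, $1/8$, which lies below $\kappa/(2\pi)$. This discrepancy appears to originate in the paper's own Corollary statement rather than in a defect of your strategy, but a careful proof must either replace $1/4$ by a smaller constant or supply additional input beyond Proposition~1.
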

The proof of Proposition 1 will take a significant portion of our paper. Although the ideas required to generalize \cite[Theorem 1]{FI1} in this way are not particularly deep, implementing them turns out to be quite complicated simply because the proof of \cite[Theorem 1]{FI1} itself is very difficult. One can thus view Sections 4-6 as a summary of the proof of \cite[Theorem 1]{FI1} in a slightly more general context.
\\\\
Since primes of the form $a^2+c^4$ with $c$ even have density $0$ in the set of all primes, our methods cannot be used to tackle Conjecture 1. Nonetheless, each of the cases $(i)-(iv)$ in Theorem 1 occurs with the same density among all primes this form, so the conjecture for $k = 4$ deduced from the Cohen-Lenstra heuristics above holds within the thin family of imaginary quadratic number fields $\QQP$ where $p$ is a prime of the form $a^2+c^4$ with $c$ even. This is yet another piece of evidence suggesting that Conjecture 1 is true for $k = 4$. However, we also note that Conjecture 1 for $k = 4$ does not imply Corollary \ref{COR1}, as knowledge of the behavior of the class numbers of $\QQP$ over the set of all primes $p$ does not necessarily give information about their behavior over a thin subset of all primes. 
\\\\
We now give a consequence of our results and a criterion for divisibility by $16$ due to Williams \cite{W81}. Let $p\equiv 1\bmod 8$, and let $\epsilon_p$ be a fundamental unit of the real quadratic field $\QQ(\sqrt{p})$, written in the form $\epsilon_p = T + U\sqrt{p}$, where $T$ and $U$ are integers. The criterion states that if $8|h$, then 
\begin{equation}\label{critT}
h\equiv T + p - 1 \bmod{16},
\end{equation}
so that $16|h$ if and only if $T\equiv 1-p\bmod{16}$. An immediate byproduct of Theorem 1 and criterion \eqref{critT} is the following corollary.
\begin{corollary}\label{CORF1}
Suppose $p$ is a prime of the form $a^2+c^4$, where $a$ is odd and $c$ is even. Let $\epsilon_p = T + U\sqrt{p}$ denote a fundamental unit of $\QQ(\sqrt{p})$.
\\\\
(i) If $a \equiv \pm 1 \bmod{16}$ and $c \equiv 0 \bmod{4}$, then $T \equiv 0 \bmod{16}$ and $U\equiv \pm 1\bmod 8$.
\\\\
(ii) If $a \equiv \pm 3 \bmod{16}$ and $c \equiv 2 \bmod{4}$, then $T \equiv 8 \bmod{16}$ and $U\equiv \pm 5\bmod 8$.
\\\\
(iii) If $a \equiv \pm 7 \bmod{16}$ and $c \equiv 0 \bmod{4}$, then $T \equiv 8 \bmod{16}$ and $U\equiv  \pm 1\bmod 8$.
\\\\
(iv) If $a \equiv \pm 5 \bmod{16}$ and $c \equiv 2 \bmod{4}$, then $T \equiv 0 \bmod{16}$ and $U\equiv \pm 5\bmod 8$.
\end{corollary}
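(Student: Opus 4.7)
The plan is to combine Williams's criterion \eqref{critT} with Theorem 1 to pin down $T \bmod 16$, and then to exploit the Pell-type identity $T^2 - pU^2 = \pm 1$ satisfied by $\epsilon_p$ to extract $U \bmod 8$.

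First I would compute $p \bmod 16$ in each case. Since $a$ is odd, a direct calculation gives $a^2 \equiv 1, 9, 25, 17 \bmod{32}$ for $a \equiv \pm 1, \pm 3, \pm 5, \pm 7 \bmod{16}$ respectively. For $c$ even, one has $c^4 \equiv 0 \bmod{32}$ if $c \equiv 0 \bmod{4}$, while $c \equiv 2 \bmod{4}$ forces $c^4 \equiv 16 \bmod{32}$ (using $(2m+1)^4 \equiv 1 \bmod{16}$). Summing, $p = a^2 + c^4 \equiv 1 \bmod{16}$ in cases $(i)$ and $(iii)$, and $p \equiv 9 \bmod{16}$ in cases $(ii)$ and $(iv)$. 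Plugging these together with the residues of $h_{-4p}$ from Theorem 1 into the Williams criterion $T \equiv h_{-4p} - p + 1 \bmod{16}$ yields $T \equiv 0 \bmod{16}$ in $(i),(iv)$ and $T \equiv 8 \bmod{16}$ in $(ii),(iii)$, as claimed.

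For the $U$-part, I would use $\epsilon_p \overline{\epsilon_p} = T^2 - pU^2 = \pm 1$. Since $T \equiv 0 \bmod{8}$ in all four cases, $T^2 \equiv 0 \bmod{16}$, so $pU^2 \equiv \mp 1 \bmod{16}$. The odd squares modulo $16$ lie in $\{1,9\}$, which rules out the $-1$ residue (and so forces $N(\epsilon_p) = -1$, as expected for a prime $p\equiv 1\bmod 4$). Inverting $p \bmod{16}$ — it is its own inverse since $9\cdot 9 \equiv 1 \bmod{16}$ — gives $U^2 \equiv 1 \bmod{16}$ in $(i),(iii)$ and $U^2 \equiv 9 \bmod{16}$ in $(ii),(iv)$. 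A final check of the odd residues shows $U^2 \equiv 1 \bmod{16} \iff U \equiv \pm 1 \bmod{8}$ and $U^2 \equiv 9 \bmod{16} \iff U \equiv \pm 3 \equiv \pm 5 \bmod{8}$, matching the four statements.

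There is no substantive obstacle here: given Theorem 1 and the criterion \eqref{critT} (both already on the table), the corollary reduces to bookkeeping modulo $16$. The only mild subtlety is handling the sign in $T^2 - pU^2 = \pm 1$, but the fact that $-1$ is not an odd square modulo $16$ settles this automatically.
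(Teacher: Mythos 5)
Your proposal is correct and carries out exactly the verification the paper leaves implicit: the paper states this corollary without a proof (``an immediate byproduct of Theorem 1 and criterion \eqref{critT}''), and your calculation of $p\bmod 16$ plus substitution into Williams's congruence gives $T\bmod 16$, while the Pell relation $T^2-pU^2=\pm1$ (with the sign forced to $-1$ because $-1$ is not an odd square mod $16$) gives $U\bmod 8$. This is the same bookkeeping the authors had in mind, just written out.
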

This can be viewed as an extension of \cite[Corollary 1.2(i), p.115-116]{Lag} to primes of the form $p = a^2+c^4$. Now Proposition 1 gives
\begin{corollary}\label{CORF2}
For a prime $p\equiv 1\bmod 8$, let $\epsilon_p = T + U\sqrt{p}$ denote the fundamental unit of $\QQ(\sqrt{p})$. Then, for sufficiently large $X$, we have 
$$\#\{p\leq X: p\equiv 1\bmod 8,\ T\equiv 0\bmod{16}\}\geq \frac{X^{3/4}}{4\log X}$$
and 
$$\#\{p\leq X: p\equiv 1\bmod 8,\ T \equiv 8\bmod{16}\}\geq \frac{X^{3/4}}{4\log X}.$$
\end{corollary}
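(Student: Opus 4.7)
The plan is to deduce Corollary~\ref{CORF2} as an essentially immediate consequence of Corollary~\ref{CORF1} and Proposition~1. First, I observe that any prime of the form $p = a^2 + c^4$ with $a$ odd and $c$ even automatically satisfies $p \equiv 1 \bmod 8$, since $a^2 \equiv 1 \bmod 8$ and $c^4 \equiv 0 \bmod 16$. Thus every prime produced by Corollary~\ref{CORF1} already lies in the set $\{p : p \equiv 1 \bmod 8\}$ considered in Corollary~\ref{CORF2}, and applying Williams' criterion \eqref{critT} is legitimate.

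By Corollary~\ref{CORF1}, the condition $T \equiv 0 \bmod 16$ is implied by case (i) ($a \equiv \pm 1 \bmod 16$, $c \equiv 0 \bmod 4$) or case (iv) ($a \equiv \pm 5 \bmod 16$, $c \equiv 2 \bmod 4$), which together correspond to the four pairwise disjoint residue classes
\[(a_0, c_0) \in \{(1,0),(15,0),(5,2),(11,2)\} \subset (\ZZ/16\ZZ) \times (\ZZ/4\ZZ).\]
Applying Proposition~1 to each of these four classes and adding the contributions (the four classes produce disjoint collections of primes, because either the residue $a \bmod 16$ or the residue $c \bmod 4$ distinguishes them) yields a lower bound of shape $\frac{2\kappa}{\pi}(1 + o(1))\frac{X^{3/4}}{\log X}$ for the number of primes $p \leq X$ with $p \equiv 1 \bmod 8$ and $T \equiv 0 \bmod 16$. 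Since $2\kappa/\pi > 1/4$, this exceeds $X^{3/4}/(4\log X)$ for all sufficiently large $X$.

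The bound for $T \equiv 8 \bmod 16$ follows identically from cases (ii) and (iii) of Corollary~\ref{CORF1}, which correspond to the pairs $(a_0, c_0) \in \{(3,2),(13,2),(7,0),(9,0)\}$, again four pairwise disjoint residue classes to which Proposition~1 applies. No substantive obstacle arises: the proof of Corollary~\ref{CORF2} is a purely formal bookkeeping step that combines the explicit divisibility criterion of Corollary~\ref{CORF1} with the uniform counting estimate of Proposition~1, all of the real content having been established in those two ingredients.
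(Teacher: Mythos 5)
Your overall strategy---combine Corollary~\ref{CORF1} with Proposition~1 over the appropriate residue classes---is the same as the paper's, and your identification of the eight $(a_0,c_0)$ pairs and the observation that $p=a^2+c^4$ with $a$ odd, $c$ even forces $p\equiv 1\bmod 8$ are both correct. However, there is a genuine multiplicity error in the passage from the quantity estimated in Proposition~1, which counts \emph{ordered pairs} $(a,c)\in\ZZ^2$, to a count of \emph{distinct primes}. The four classes $\{(1,0),(15,0),(5,2),(11,2)\}$ do \emph{not} yield disjoint collections of primes: since the representation of a prime $p=a^2+c^4$ (with $a$ odd, $c$ even) is unique up to the signs of $a$ and $c$, a prime represented with $a\equiv 1\bmod 16$, $c\equiv 0\bmod 4$ is the very same prime as the one represented (via $-a$) with $a\equiv 15\bmod 16$, $c\equiv 0\bmod 4$; likewise for $\{(5,2),(11,2)\}$. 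Moreover, even within a fixed class $(a_0,c_0)$ each prime is counted twice, for $(a,c)$ and $(a,-c)$, since $c_0\equiv -c_0\bmod 4$.

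Altogether, summing the four instances of Proposition~1 counts every relevant prime exactly four times, so the number of distinct primes one actually obtains is $\frac{\kappa}{2\pi}(1+o(1))\frac{X^{3/4}}{\log X}$, not $\frac{2\kappa}{\pi}(1+o(1))\frac{X^{3/4}}{\log X}$ as you assert. Since $\frac{\kappa}{2\pi}\approx 0.139 < \frac14$, the inequality $\geq \frac{X^{3/4}}{4\log X}$ does \emph{not} follow from your argument as written; a correct version of the bookkeeping gives a lower bound like $\frac{X^{3/4}}{8\log X}$ for large $X$. (The same multiplicity subtlety already affects the explicit constant in the paper's Corollary~\ref{COR1} and Corollary~\ref{CORF2}, which the paper states without proof; the qualitative assertion that there are $\gg X^{3/4}/\log X$ such primes is of course unaffected, but the stated constant $\tfrac14$ deserves a correction.) You should make the transition from pairs to primes explicit, noting that each prime corresponds to exactly two pairs in each of two of your four $(a_0,c_0)$ classes, and adjust the constant accordingly.
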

The existence of infinitely many $p\equiv 1\bmod 8$ such that $T\equiv T_0 \bmod 16$ for a fixed $T_0\in\{0, 8\}$ is not at all trivial. Hence Corollary \ref{CORF1} sheds some new light on the fundamental unit $\epsilon_p$ of $\QQ(\sqrt{p})$, one of the most mysterious quantities in number theory.

\section{Hilbert class fields} 
Suppose $p\equiv 1\pmod 4$. Then there are two finite primes of $\QQ$ which ramify in $\QQP$, namely $2$ and $p$. The prime $\pp = (\sqrt{-p})$ of $\QQP$ lying above $p$ is principal, and so its ideal class in $\CC$ is the identity. Genus theory then implies that the class of the prime ideal $\pt = (2, 1+\sqrt{-p})$ of $\QQP$ lying above $2$ is the unique element of order two in $\CC$. Assuming that $h$ is divisible by $2^n$ for some non-negative integer $n$, to check that it is divisible by $2^{n+1}$, it would suffice to check that the class of $\pt$ belongs to $\CC^{2^n}$.

\subsection{$2^n$-Hilbert class fields}
Recall that the \textit{Hilbert class field} $H$ of $K = \QQP$ is the maximal unramified abelian extension of $\QQP$. The Artin symbol induces a canonical isomorphism of groups
\begin{equation}\label{ArtSym}
\left(\frac{\cdot}{H/K}\right):\CC\longrightarrow \Gal(H/\QQP). 
\end{equation}
Suppose for the moment that $2^n|h$ for some non-negative integer $n$. Then $\CC^{2^n}$ is a subgroup of $\CC$ of index $2^n$. We define the $2^n$-\textit{Hilbert class field} $H_{2^n}$ to be the subfield of $H$ fixed by the the image of $\CC^{2^n}$ under the isomorphism \eqref{ArtSym}. Since the $2$-primary part of $\CC$ is cyclic, it follows immediately that $H_{2^n}$ is the unique unramified, cyclic, degree-$2^n$ extension of $K$. Moreover, \eqref{ArtSym} induces a canonical isomorphism of cyclic groups of order $2^n$
\begin{equation}\label{ArtSym2n}
\left(\frac{\cdot}{H_{2^n}/K}\right):\CC/\CC^{2^n}\longrightarrow \Gal(H_{2^n}/K). 
\end{equation}
Hence $\pt$ belongs to $\CC^{2^n}$ if and only if $\pt$ has trivial Artin symbol in $\Gal(H_{2^n}/K)$. By class field theory, this is equivalent to $\pt$ splitting completely in $H_{2^n}$.
\\\\
The main idea of the proof of Theorem 1 is to write down explicitly the $8$-Hilbert class field $H_8$ of $\QQP$, and then to characterize those $p$ such that $\pt$ splits completely in $H_8$. We remark here that although Cohn and Cooke \cite{CohnCooke} have already written down $H_8$ in terms of the fundamental unit $\epsilon_p$ of the real quadratic number field $\QQ(\sqrt{p})$ and certain integer solutions $u$ and $v$ to $p = 2u^2-v^2$, not enough is known about either $\epsilon_p$ or $u$ and $v$ to deduce anything about the distribution of primes $p$ such that $\pt$ splits completely in $H_8$.

\subsection{Generating $2^n$-Hilbert class fields}
We first state and prove some lemmas which will prove to be useful in our quest to explicitly generate $H_8$.
\\\\
The $2$-Hilbert class field, also called the \textit{genus field} of $\QQP$, is known to be $H_2 = \QQ(i, \sqrt{p})$. Hence every $2^n$-Hilbert class field of $\QQP$ contains $\QQ(i)$, and so we can study the splitting behavior of $\pt$ in $H_{2^n}$ by working over the quadratic subfield $\QQ(i)$ of $H_2$. With this in mind, we now state some well-known generalities about the completion of $\QQ(i)$ with respect to the prime ideal $(1+i)$ lying over $2$.
\\\\
This completion is $\QQ_2(i)$, and its ring of integers $\ZZ_2[i]$ is a discrete valuation ring with maximal ideal $\mm$ and uniformizer $m = 1+i$. Let $U = (\ZZ_2[i])^{\times}$ denote the group of units of $\ZZ_2[i]$ and for each positive integer $k$, define $U^{(k)} = 1+\mm^k$. Then there is a filtration
$$U = U^{(1)}\supset U^{(2)}\supset\cdots\supset U^{(k)}\supset\cdots.$$
For any $k\geq 3$, squaring gives an isomorphism $U^{(k)}\xrightarrow{\sim} U^{(k+2)}$. Indeed, let $1+m^{k+2}y \in U^{(k+2)}$. Hensel's lemma implies that there exists $x\in \mm^{k-2}$ such that $x^2+x = m^{k-2}y$. Then $(1+2x)^2 = 1+m^{k+2}y$ and $1+2x \in U^{(k)}$. It is not hard to see that 
$$U = \left\langle i\right\rangle\times U^{(3)} = \left\langle i\right\rangle\times\left\langle 2+i\right\rangle\times U^{(4)},$$
so that $U^2 = \left\langle -1\right\rangle \times U^{(5)}$. In other words, $u\in U$ is a square in $\QQ_2(i)$ if and only if $u\equiv \pm 1\pmod{\mm^5}$. Moreover, if $\omega\equiv \pm 1\pmod \mm^4$ (or if $\omega\neq \pm 1\pmod \mm^4$), then the minimal polynomial of $\sqrt{\omega}$ over $\ZZ_2[i]$ reduces to $X^2+X$ or $X^2+X+1$ (respectively $X^2+1$) modulo $\mm$. We collect these observations into the following lemma.
\begin{lemma}\label{ptunram}
Let $\omega$ be a unit in $\ZZ_2[i]$. Then $\QQ_2(i, \sqrt{\omega})$ is unramified over $\QQ_2(i)$ if and only if $\omega\equiv \pm 1\pmod{\mm^4}$. Moreover, $\QQ_2(i, \sqrt{\omega})=\QQ_2(i)$, i.e. $\omega$ is a square in $\QQ(i)$ if and only if $\omega\equiv \pm 1\pmod \mm^5$. 
\end{lemma}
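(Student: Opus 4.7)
The lemma essentially repackages the structural observations about the local unit group $U=(\ZZ_2[i])^{\times}$ stated in the paragraph preceding it, and I would proceed in three steps. For the squares claim (the second assertion), I would first verify $U = \langle i\rangle \times U^{(3)}$ by noting $v_\mm(i-1) = v_\mm(i+1) = 1$ and $v_\mm(-2) = 2$, so that $\langle i\rangle \cap U^{(3)} = \{1\}$. Squaring this decomposition and invoking the squaring isomorphism $U^{(3)}\xrightarrow{\sim} U^{(5)}$ (the $k=3$ case, proved in the preceding paragraph using Hensel's lemma) gives $U^2 = \langle -1\rangle \cdot U^{(5)}$; since $-1\in U^{(2)}\setminus U^{(3)}$ the product is direct, and $U^2 = \langle -1\rangle \times U^{(5)}$. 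Unwinding, $\omega\in U$ is a square in $\QQ_2(i)$ iff $\omega\equiv \pm 1\pmod{\mm^5}$.

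For the unramified direction ($\Leftarrow$), I would assume $\omega\equiv 1\pmod{\mm^4}$ and use $m^4 = -4$ to write $\omega = 1+4u$ with $u\in\ZZ_2[i]$. Setting $\beta = (\sqrt\omega-1)/2$, a direct expansion shows that $\beta$ is a root of the monic integral polynomial $X^2+X-u$. When $u\in\mm$ (equivalently $\omega\equiv 1\pmod{\mm^5}$), this reduces modulo $\mm$ to $X(X+1)$, Hensel's lemma produces a root in $\ZZ_2[i]$, and $\omega$ is already a square. When $u$ is a unit, the reduction is the irreducible polynomial $X^2+X+1\in\FF_2[X]$, so the extension $\QQ_2(i)(\beta)/\QQ_2(i)$ has residue degree $2$ and is therefore unramified of degree $2$. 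The case $\omega\equiv -1\pmod{\mm^4}$ reduces to the previous one via the identity $\sqrt\omega = i\sqrt{-\omega}$, which is available because $i\in\QQ_2(i)$.

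For the converse, I would use that $\QQ_2(i)$ admits a unique unramified quadratic extension $L$, which the previous step realizes as $\QQ_2(i)(\sqrt{\omega_0})$ for any fixed $\omega_0\in U^{(4)}\setminus U^{(5)}$. Hence $\QQ_2(i)(\sqrt\omega)$ is unramified iff it equals $\QQ_2(i)$ or $L$, iff $\omega\in U^2\cup \omega_0 U^2$. Using $-1\in U^2$, the image of $U^{(4)}\cup (-U^{(4)}) = \{\omega\in U:\omega\equiv\pm1\pmod{\mm^4}\}$ in $U/U^2$ equals the image of $U^{(4)}$, which has size $|U^{(4)}/U^{(5)}|=2$; these two cosets are exactly $U^2$ and $\omega_0 U^2$. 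So $\omega\equiv \pm 1\pmod{\mm^4}$ characterizes precisely the unramified case, completing the proof. The only genuinely computational step is the mod-$\mm$ factorization of $X^2+X-u$ in the second step; everything else amounts to careful bookkeeping with the filtration on $U$.
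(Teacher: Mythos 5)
Your proposal is correct, and the first two steps — the direct-product decomposition of $U$ and the translated generator $\beta=(\sqrt\omega-1)/2$ with minimal polynomial $X^2+X-u$ — are exactly the observations the paper collects into this lemma in the paragraph preceding it. The only point of divergence is the converse direction for unramifiedness: the paper asserts directly that when $\omega\not\equiv\pm1\pmod{\mm^4}$ the reduction of the generator's minimal polynomial is $X^2+1=(X+1)^2$ (inseparable, hence ramified), whereas you invoke the uniqueness of the unramified quadratic extension of $\QQ_2(i)$ together with a counting argument in $U/U^2$ (namely that $U^{(4)}\cup(-U^{(4)})=U^2\cup\omega_0 U^2$ occupies exactly two of the eight square classes). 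Both routes are sound; the paper's is more self-contained in that it never leaves the minimal-polynomial computation, while yours imports the standard structural fact about local fields but arguably clarifies \emph{why} the boundary falls exactly at $\mm^4$, since the dichotomy at that level is forced by $|U^{(4)}/U^{(5)}|=2$. One small presentational caveat: the paper's phrase ``the minimal polynomial of $\sqrt\omega$ over $\ZZ_2[i]$'' is an abuse (that polynomial is literally $X^2-\omega$, whose reduction is always $(X+1)^2$); your explicit passage to the integral generator $\beta$ makes precise what the paper only gestures at.
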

Next, we state two lemmas which we will use to check that the extensions of $\QQP$ which we construct are normal and cyclic.
\begin{lemma}\label{lemNor}
Let $K$ be a field of characteristic different from $2$, let $d$ be an element of $K$ which is not a square in $K$, and let $L = K(\sqrt{d})$. Let $a, b\in K$ such that $a+b\sqrt{d}$ is not a square in $L$ and let $M = L(\sqrt{a+b\sqrt{d}})$. Then $L/K$ is cyclic of degree $4$ if and only if $a^2-db^2\in d\cdot K^2$.
\end{lemma}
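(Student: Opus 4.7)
The plan is to analyze the quartic minimal polynomial of $\beta := \sqrt{a+b\sqrt{d}}$ over $K$. Squaring the relation $\beta^{2}-a = b\sqrt{d}$ shows this polynomial is
$$P(X) = X^{4} - 2aX^{2} + (a^{2} - db^{2}),$$
whose four roots are $\pm\sqrt{a+b\sqrt{d}}$ and $\pm\sqrt{a-b\sqrt{d}}$. Hence $M/K$ is Galois of degree $4$ if and only if $\sqrt{a-b\sqrt{d}} \in M$. Throughout I will exploit the key identity
$$\sqrt{a+b\sqrt{d}}\cdot\sqrt{a-b\sqrt{d}} = \pm\sqrt{a^{2}-db^{2}}.$$

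For the forward direction, suppose $M/K$ is cyclic of degree $4$. Then it is Galois, so $\sqrt{a-b\sqrt{d}} \in M$, and the key identity forces $\sqrt{a^{2}-db^{2}} \in M$. Since a cyclic quartic extension has a \emph{unique} quadratic subfield, which here must be $L$, the element $\sqrt{a^{2}-db^{2}}$ lies in $L = K(\sqrt{d})$. Writing it as $u+v\sqrt{d}$ with $u,v\in K$ and squaring forces $uv=0$, so $a^{2}-db^{2} \in K^{2} \cup dK^{2}$. To rule out the possibility $a^{2}-db^{2} \in K^{2}$, I would assume $a^{2}-db^{2} = c^{2}$ with $c\in K$ and exhibit the two independent involutions $\sigma : \sqrt{d}\mapsto -\sqrt{d},\ \beta\mapsto c/\beta$ and $\tau : \beta\mapsto -\beta$ of $M/K$; together these generate a Klein four subgroup of $\Gal(M/K)$, contradicting cyclicity. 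This leaves $a^{2}-db^{2}\in dK^{2}$.

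For the converse, write $a^{2}-db^{2} = dc^{2}$ with $c\in K$. The key identity then yields $\sqrt{a-b\sqrt{d}} = \pm c\sqrt{d}/\beta \in M$, so $M/K$ is Galois. I would then define $\sigma\in\Gal(M/K)$ by $\sigma(\sqrt{d}) = -\sqrt{d}$ and $\sigma(\beta) = c\sqrt{d}/\beta$, and compute
$$\sigma^{2}(\beta) = \sigma\!\left(\frac{c\sqrt{d}}{\beta}\right) = \frac{-c\sqrt{d}}{c\sqrt{d}/\beta} = -\beta,$$
so $\sigma$ has order $4$ and $\Gal(M/K)$ is cyclic of order $4$.

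The main obstacle is purely bookkeeping: carefully tracking sign conventions on the square roots and confirming that the candidate automorphisms are well defined given the chosen sign for $\beta$. The only conceptual input is the uniqueness of the quadratic subfield inside a cyclic quartic; once that is invoked, both directions reduce to the multiplicative identity above together with a one-line computation of $\sigma^{2}$.
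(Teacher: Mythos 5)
Your proof is correct, and it is essentially the standard argument for this classical fact; the paper itself gives no proof at all, only the citation to Lang (Algebra, Ch.\ VI, Exercise 4), so there is no in-paper argument to compare against. (Also note the statement in the paper has an evident typo, ``$L/K$ is cyclic'' for ``$M/K$ is cyclic,'' which you correctly read past.)

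One small point worth making explicit: your argument treats $M = K(\beta)$ with $P(X)=X^4-2aX^2+(a^2-db^2)$ as the minimal polynomial of $\beta$, which requires $b\neq 0$ (if $b=0$ then $\beta^2=a\in K$, $K(\beta)\subsetneq M$, and $P$ is reducible). The case $b=0$ is harmless for the lemma --- there $M=K(\sqrt d,\sqrt a)$ is visibly biquadratic, so $M/K$ is not cyclic, and at the same time $a^2-db^2=a^2\in dK^2$ would force $d\in K^2$, so the condition fails --- but it should be disposed of before invoking the quartic $P$. Similarly, under the hypotheses $a^2-db^2\neq 0$ (otherwise $a\mp b\sqrt d=0$ would make $a+b\sqrt d$ a square in $L$ or force $\sqrt d\in K$), so $c\neq 0$ and the maps $\beta\mapsto c/\beta$, $\beta\mapsto c\sqrt d/\beta$ are well defined. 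With these two remarks added, the argument is complete: the norm identity $\beta\cdot\sqrt{a-b\sqrt d}=\pm\sqrt{a^2-db^2}$, the uniqueness of the quadratic subfield of a cyclic quartic, the Klein-four obstruction when $a^2-db^2\in K^2$, and the order-$4$ computation $\sigma^2(\beta)=-\beta$ in the converse are all correct.
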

\begin{proof}
See \cite[Chapter VI, Exercise 4, p.321]{Lang}.
\end{proof}
\begin{lemma}\label{lemCyclic}
Let $K$ be a field. Suppose $M/K$ is a cyclic extension of degree $2m$ and let $\sigma$ be a generator of $\Gal(M/K)$. Let $L$ be the subfield of $M$ fixed by $\sigma^m$. Suppose $N/K$ is a Galois extension containing $M$ such that $N/L$ is cyclic of degree $4$. Then $N/K$ is cyclic of degree $4m$.     
\end{lemma}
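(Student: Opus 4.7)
The goal is to exhibit an element of order $4m$ in $\Gal(N/K)$. First I would verify the degree count: since $L$ is the fixed field of $\sigma^m$ in the cyclic extension $M/K$ of degree $2m$, we have $[L:K] = m$, and together with $[N:L] = 4$ this gives $[N:K] = 4m$. Because $M/K$ is Galois and is contained in the Galois extension $N/K$, the restriction map $\Gal(N/K) \to \Gal(M/K)$ is surjective with kernel $\Gal(N/M)$, a group of order $[N:M] = [N:L]/[M:L] = 2$. I would then pick any lift $\tilde\sigma \in \Gal(N/K)$ of $\sigma$; the order of $\tilde\sigma$ is a multiple of $2m$ (since it surjects onto the element $\sigma$ of order $2m$) and divides $|\Gal(N/K)| = 4m$, hence equals $2m$ or $4m$.

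The heart of the argument is ruling out the case of order $2m$. Let $\tau$ be a generator of $\Gal(N/L)$, which by hypothesis is cyclic of order $4$. Since $L \subseteq M \subseteq N$, the subgroup $\Gal(N/M)$ sits inside $\Gal(N/L) = \langle\tau\rangle$ as its unique subgroup of order $2$, namely $\langle\tau^2\rangle$. The restriction $\tau|_M$ lies in $\Gal(M/L)$ and has order $4/|\Gal(N/M)| = 2$, so it generates $\Gal(M/L) = \langle\sigma^m\rangle$; after possibly replacing $\tau$ by $\tau^3$ we may assume $\tau|_M = \sigma^m$. Now consider $\tilde\sigma^m$: its restriction to $M$ is $\sigma^m = \tau|_M$, so $\tilde\sigma^m \tau^{-1}$ lies in $\Gal(N/M) = \langle\tau^2\rangle$. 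This forces $\tilde\sigma^m \in \{\tau, \tau^3\}$, which has order exactly $4$ in $\Gal(N/K)$. Consequently $\tilde\sigma$ has order $4m$ and generates $\Gal(N/K)$, proving cyclicity.

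The main obstacle is precisely this last step: a priori the lift $\tilde\sigma$ could have order $2m$, in which case $\Gal(N/K)$ would split as a direct product rather than being cyclic. The hypothesis that $N/L$ is cyclic of degree $4$ (rather than merely some degree-$4$ extension of $L$ containing $M$) is exactly what is needed to squeeze $\tilde\sigma^m$ into a cyclic group of order $4$ in a way that doubles its order from $2$ to $4$. Everything else is a routine manipulation of the restriction map and of orders of elements in the Galois group, so no further technical difficulties are anticipated.
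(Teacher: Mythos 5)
Your proposal is correct and follows essentially the same route as the paper: lift $\sigma$ to $\Gal(N/K)$, note the lift has order at least $2m$, and observe that its $m$-th power lies in $\Gal(N/L)$ and is non-trivial on $M$, hence has order $4$ in the cyclic group $\Gal(N/L)$, forcing the lift to have order $4m$. The paper states this more tersely; your version spells out the structure of $\Gal(N/L)$ via the generator $\tau$ (incidentally, since $\Gal(M/L)$ has order $2$, the identity $\tau|_M=\sigma^m$ is automatic and no replacement of $\tau$ by $\tau^3$ is needed), but the underlying idea is identical.
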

\begin{proof}
Let $\sigma_1$ denote a lifts of $\sigma$ to $\Gal(N/K)$. The order of $\sigma_1$ is at least $2m$ since the order of $\sigma$ is $2m$. As $\sigma^m$ fixes $L$, $\sigma_1^m$ is an element of $\Gal(N/L)$which is non-trivial on $M$ and hence has order $4$. Thus the order of $\sigma_1$ is $4m$.
\end{proof}
Finally, we arrive at the main lemma we will use to construct $2^n$-Hilbert class fields from $2^{n-1}$-Hilbert class fields. This result is inspired by a theorem of Reichardt \cite[3. Satz, p.82]{Reichardt}. His theorem proves the existence of generators $\sqrt{\varpi}$ for $H_{2^n}$ over $H_{2^{n-1}}$ with $\varpi\in H_{2^{n-1}}$ of a certain form. We prove sufficient conditions for an element $\varpi$ of a similar form to give rise to a generator, so that we can actually construct $H_{2^n}$. 
\begin{lemma}\label{lemH2n}
Let $h$ be the class number of $\QQP$, let $n\geq 2$, and suppose that $2^n$ divides $h$. Suppose $\Ano$ is a degree $2^{n-1}$ extension of $\QQ$ such that:
\begin{itemize}
  \item $\Ano\subset \Hno$,
	\item $\Ano$ contains $\QQ(i)$ and $(1+i)$ is unramified in $\Ano/\QQ(i)$, and
	\item there is a prime element $\varpi$ in $\Ano$ such that:
	\begin{itemize}
		\item $\varpi$ lies above $p$ and its ramification and inertia indices over $p$ are equal to $1$,
		\item denoting the conjugate of $\varpi$ over $\Ant = \Ano\cap\Hnt$ by $\varpi'$, we have $\Hno = \Hnt(\sqrt{\varpi\varpi'}) = \Ano(\sqrt{\varpi\varpi'})$,
		\item $(U_2)$: $(1+i)$ remains unramified in $\Anz = \Ano(\sqrt{\varpi})$, and
		\item $(N)$: $\Hno(\sqrt{\varpi})$ is normal over $\QQ$.
	\end{itemize}
\end{itemize}
Then $\Hnz = \Hno(\sqrt{\varpi})$. 
\end{lemma}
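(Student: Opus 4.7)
The strategy is to verify that $N := \Hno(\sqrt{\varpi})$ satisfies the three defining properties of the $2^n$-Hilbert class field $\Hnz$: namely, that it is an unramified cyclic extension of $K = \QQP$ of degree $2^n$. The uniqueness of such an extension will then force $N = \Hnz$.

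For the degree and cyclicity: to see $[N:\Hno]=2$ (hence $[N:K]=2^n$), one must show $\varpi$ is not a square in $\Hno = \Ano(\sqrt{\varpi\varpi'})$. By Kummer theory this reduces to ruling out $\varpi\in(\Ano^{\times})^2 \cup \varpi\varpi'(\Ano^{\times})^2$, both of which are immediate from the fact that $(\varpi)$ and $(\varpi')$ are prime ideals of $\Ano$ while squares have even valuation. Hypothesis $(N)$ then makes $N/K$ Galois. To upgrade this to cyclic of order $2^n$, I would apply Lemma \ref{lemCyclic} with $M=\Hno$, $L=\Hnt$, and $m=2^{n-2}$, reducing the task to showing $N/\Hnt$ is cyclic of degree $4$. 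For this I would invoke Lemma \ref{lemNor} with $d=\varpi\varpi'\in\Hnt$: writing $\varpi = \alpha + \beta\sqrt{\varpi\varpi'}$ where $\alpha=(\varpi+\varpi')/2$ and $\beta=(\varpi-\varpi')/(2\sqrt{\varpi\varpi'})$ lie in $\Hnt$ (they are fixed by the involution of $\Hno/\Hnt$, which sends $\varpi\mapsto\varpi'$), the norm identity yields $\alpha^2 - \varpi\varpi'\beta^2 = \varpi\varpi' = \varpi\varpi'\cdot 1^2$, satisfying the criterion of Lemma \ref{lemNor} trivially.

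For unramifiedness: since $\Hno/K$ is already unramified and $K$ is imaginary quadratic (so infinite primes are automatic), it suffices to check $N/\Hno$ is unramified at every finite prime; the only possibilities are primes above $\varpi$ or above $2$. At primes above $p$: since $p$ is totally ramified in $K/\QQ$ and $\Hno/K$ is unramified, any prime of $\Hno$ above $p$ has $e=2$ over $p$. Since $(\varpi)$ has $e=1$ in $\Ano/\QQ$, this forces $e=2$ for the unique prime $\pi$ of $\Hno$ above $(\varpi)$, so $\varpi = \pi^2 u$ for a local unit $u$; adjoining $\sqrt{u}=\sqrt{\varpi}/\pi$ at a prime of odd residue characteristic is at worst the unramified quadratic extension. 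At primes above $2$: hypothesis $(U_2)$ combined with $(1+i)$ being unramified in $\Ano/\QQ(i)$ gives that $\Anz/\Ano$ is unramified above $(1+i)$; an analogous ramification-index bookkeeping (using $e((1+i)/2) = e(\pt/2) = 2$ together with $\Hno/K$ unramified) shows $\Hno/\Ano$ is also unramified above $(1+i)$. Hence the compositum $N = \Hno\cdot\Anz$ is unramified over $\Ano$ above $(1+i)$, so $N/\Hno$ is unramified at primes above $2$ as well.

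Combining the above, $N/K$ is an unramified cyclic extension of degree $2^n$, so $N=\Hnz$ by uniqueness of the $2^n$-Hilbert class field. The main obstacle is the ramification analysis at $2$: hypothesis $(U_2)$ is phrased in terms of $\Anz/\QQ(i)$, whereas the needed conclusion is about $N/\Hno$, and bridging this gap requires the compositum identity $N=\Hno\cdot\Anz$ together with a ramification-index count that propagates unramifiedness from $\Ano$ up to $\Hno$ via the lattice of subfields of $\Hno$.
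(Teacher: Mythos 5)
Your proof is correct and follows the same global strategy as the paper's: show $N := \Hno(\sqrt{\varpi})$ is an unramified cyclic degree-$2^n$ extension of $K=\QQP$ and invoke uniqueness. The cyclicity step is handled identically (Lemma~\ref{lemNor} with $d=\varpi\varpi'$, giving $\alpha^2-d\beta^2 = d\cdot 1^2$, then Lemma~\ref{lemCyclic} with hypothesis~$(N)$ supplying the Galois-ness over $K$ that Lemma~\ref{lemCyclic} presupposes), and the ramification-at-$p$ analysis uses the same key fact (coprimality of $\varpi$ and $\varpi'$). Where you genuinely diverge is in two places. First, to show $\sqrt{\varpi}\notin\Hno$, you argue by Kummer theory and parity of valuations at the prime $(\varpi)$; the paper instead argues by normality ($\Hno/\Ant$ is normal, indeed abelian, while $\Anz/\Ant$ is not since $\sqrt{\varpi'}\notin\Anz$, so $\Anz\not\subset\Hno$). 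Your valuation argument is more direct and in fact underlies the unstated step in the paper's version that $\sqrt{\varpi'}\notin\Anz$. Second, for unramifiedness at $(1+i)$, you use a compositum argument: the hypotheses give that both $\Hno/\Ano$ and $\Anz/\Ano$ are unramified above $(1+i)$, hence so is their compositum $N$. The paper instead writes $N = \Anz(\sqrt{\varpi\varpi'})$ and appeals to Lemma~\ref{ptunram} using $p\equiv 1\bmod 4$. Both are valid; your compositum route avoids Lemma~\ref{ptunram} entirely and is arguably more transparent, at the cost of the extra ramification-index bookkeeping you describe. One small point worth tightening: at primes of $\Hno$ above $p$ but not above $(\varpi)$, one should note that $\varpi$ is a local unit (being coprime to $\varpi'$ and to all other primes of $\Ano$ over $p$), so $\sqrt{\varpi}$ is unramified there too; you address only the prime over $(\varpi)$ explicitly, whereas the paper sidesteps this by working with $\varpi'$ and using normality of $N/\QQ$ to equalize ramification indices across all primes over $p$.
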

\begin{proof}
Since the ramification index of $\varpi$ over $p$ is $1$, $\varpi$ and $\varpi'$ are coprime in $\Ano$.
\\\\
First we check that $\varpi$ is not a square in $\Hno$. Note that $\Hno$ is normal over $\Ant$, while $\Anz$ is not normal over $\Ant$ ($\sqrt{\varpi'}$ is not an element of $\Anz$). Hence $\Anz$ cannot be a subfield of $\Hno$ and so $\sqrt{\varpi}\notin\Hno$.
\\\\
By assumption, $\Hno(\sqrt{\varpi})$ is normal over $\QQ$, and hence also over $\QQP$ and $\Hnt$. Since $\varpi$ and $\varpi'$ are conjugates over $\Ant$, they are also conjugates over $\Hnt$. As $\Hno = \Hnt(\sqrt{\varpi\varpi'})$ and $\varpi\varpi' = \varpi\varpi'\cdot 1^2$, Lemma \ref{lemNor} implies that $\Hno(\sqrt{\varpi})$ is degree $4$ cyclic extension of $\Hnt$. Moreover, $\Hno$ is a degree $2^{n-1}$ cyclic extension of $\QQP$, so Lemma \ref{lemCyclic} implies that $\Hno(\sqrt{\varpi})$ is a degree $2^n$ cyclic extension of $\QQP$.
\\\\
It remains to show that $\Hno(\sqrt{\varpi})/\QQP$ is unramified. We will establish this by showing that each of the ramification indices of the primes $2$ and $p$ in $\Hno(\sqrt{\varpi})$ is at most $2$. 
\\\\
The prime $2$ ramifies in $\QQ(i)$, but by assumption $(1+i)$ is unramified in $\Anz$. As $\Hno(\sqrt{\varpi}) = \Anz(\sqrt{\varpi\varpi'})$ and $p\equiv 1\bmod 4$, Lemma \ref{ptunram} ensures that $(1+i)$ is unramified in $\Hno(\sqrt{\varpi})$. Hence the ramification index of $2$ in $\Hno(\sqrt{\varpi})$ is $2$.
\\\\
Now note that $\Anz$ is a subfield of $\Hno(\sqrt{\varpi})$ of index $2$. The ramification index of the prime $\varpi'$ over $p$ is $1$. Since $\varpi$ and $\varpi'$ are coprime, $\varpi'$ does not ramify in $\Anz$. Hence the ramification index of $p$ in $\Hno(\sqrt{\varpi})$ is at most $2$, and this completes the proof.       
\end{proof}

\subsection{Explicit constructions of $H_4$ and $H_8$}
Recall from the discussion at the end of Section 2.1 that $4$ divides $h$ if and only if the prime ideal $\pt$ lying over $2$ splits in $H_2$, which happens if and only if $(1+i)$ splits in $H_2/\QQ(i)$. As $H_2$ is obtained from $\QQ(i)$ by adjoining a square root of $p$, Lemma \ref{ptunram} implies that this happens if and only if $p\equiv \pm 1 \pmod{\mm^5}$, which, for $p\equiv 1 \pmod{4}$, is true if and only if $p\equiv 1\pmod{8}$. Thus we have recovered the criterion for divisibility by $4$. 
\\\\
From now on, assume that $4$ divides $h$, i.e. that $p\equiv 1 \pmod{8}$. We will now use Lemma \ref{lemH2n} to construct the $4$-Hilbert class field of $\QQP$.
\\\\
A prime $p \equiv 1\pmod{4}$ splits in $\QQ(i)$, so that there exists $\pi$ in $\ZZ[i]$ such that $p = \pi\overline{\pi}$; here $\overline{\pi}$ denotes the conjugate of $\pi$ over $A_1 := \QQ$. If we write $\pi$ as $a+bi$
with $a$ and $b$ integers, then we see that $p = a^2 + b^2$. We choose $\pi$ so that $b$ is even. As $p\equiv 1\pmod 8$, we see that $b$ is in fact divisible by $4$. Hence  
\begin{equation}\label{pib}
\pi = a+bi,\ \ \ b\equiv 0\bmod 4.
\end{equation}
Now fix a square root of $\pi$ and denote it by $\sqrt{\pi}$. Recall that $H_2 = \QQ(i, \sqrt{p})$ is the $2$-Hilbert class field of $\QQP$. We claim that the hypotheses of Lemma \ref{lemH2n} for $n = 2$ are satisfied with $A_2:=\QQ(i)$ and $\varpi = \pi$. 
\\\\
All of the hypotheses other than $(U_2)$ and $(N)$ are easy to check. Note that our choice of $\pi$ ensures that $\pi \equiv \pm 1\pmod 4$, so that $(U_2)$ follows from Lemma \ref{ptunram}. To see that $(N)$ is satisfied, note that $H_2(\sqrt{\pi})$ is the splitting field (over $\QQ$) of the polynomial $f_4(X):=(X-\pi)(X-\overline{\pi})$. Indeed, $\pi\overline{\pi}$ is a square in $H_2$, so both square roots of $\overline{\pi}$ are also contained in $H_2(\sqrt{\pi})$. Hence we conclude by Lemma \ref{lemH2n} that the $4$-Hilbert class field is given by  
\begin{equation}
H_4 = H_2(\sqrt{\pi}) = \QQ(i, \sqrt{p}, \sqrt{\pi})
\end{equation}
with $\pi$ as in \eqref{pib}.    
\begin{center} 
\begin{tikzpicture}
  \draw (0, 0) node[]{$H_4 = \QQ(i, \sqrt{p}, \sqrt{\pi})$};
  \draw (0, -1.5) node[]{$H_2 = \QQ(i, \sqrt{p})$};
	\draw (-3, -1.5) node[]{$A_4 = \QQ(i, \sqrt{\pi})$};
  \draw (0, -3) node[]{$\QQ(\sqrt{-p})$};
	\draw (-3, -3) node[]{$A_2 = \QQ(i)$};
	\draw (0, -4.5) node[]{$\QQ$};
  \draw (-0, -4.25) -- (0, -3.25);
	\draw (0, -2.75) -- (0, -1.75);
	\draw (0, -1.25) -- (0, -0.25);
  \draw (-0.25, -4.35) -- (-2.75, -3.25);
	\draw (-3, -2.75) -- (-3, -1.75);
	\draw (-3, -1.25) -- (-0.5, -0.25);
	\draw (-2.2, -2.8) -- (-0.5, -1.75);
\end{tikzpicture}
\end{center}
Next, we find a criterion for divisibility by $8$. Recall that $h$ is divisible by $8$ if and only if $\pt$ splits completely in $H_4$, i.e. if and only if $\pi$ is a square in $\QQ_2(i)$. By Lemma \ref{ptunram}, this happens if and only if $\pi \equiv \pm 1\pmod{\mm^5}$. In terms of $a$ and $b$ from \eqref{pib}, this means that
$$8|h\Longleftrightarrow a+b\equiv \pm 1\bmod{8}.$$
We remark that Fouvry and Kl\a"{u}ners developed similar methods in \cite{FK1}, where they constructed an analogue of the $4$-Hilbert class field to deduce a criterion for the $8$-rank of class groups in a family of real quadratic number fields. From now on, suppose that $8|h$. Replacing $\pi$ by $-\pi$ if necessary, we assume that
\begin{equation}\label{pim}
\pi\equiv 1\pmod{\mm^5}.
\end{equation}
This means that $a+b\equiv 1\pmod{8}$. Our choice of $\sqrt{\pi}$ above is only unique up to sign. By Hensel's lemma, we can now fix this sign by imposing that
\begin{equation}\label{pisq}
\sqrt{\pi}\equiv 1\pmod{\mm^3}.
\end{equation}
In order to explicitly generate $H_8$ from $H_4$ using Lemma \ref{lemH2n}, we are led to the problem of finding a prime element in $A_4 = \QQ(i, \sqrt{\pi})$ whose norm down to $\QQ(i)$ is $\overline{\pi}$, up to units. This is the problem that we cannot solve explicitly enough in general to answer questions about infinitude or density.
\\\\
However, for a very thin subset of primes, we can write down an element of $A_4$ of norm $-\overline{\pi}$. These are primes $p$ of the form 
\begin{equation}\label{pFI}
p = a^2+c^4,\ \ c\text{ even},
\end{equation}
that is, primes $p$ of the form $a^2+b^2$ with $b$ a perfect square divisible by $4$.
\\\\
Suppose that $p$ is a prime of the form \eqref{pFI}. Set 
\begin{equation}\label{varpi}
\varpi_0 = c(1+i)+\sqrt{\pi}.
\end{equation}
Let $\sigma$ be a generator for $\Gal(H_4/\QQP)$ and set $\varpi_2 = \sigma^2(\varpi_0)$. Then
\begin{equation}\label{normvarpi}
\varpi_0\cdot\varpi_2 = (c(1+i)+\sqrt{\pi})(c(1+i)-\sqrt{\pi}) = -\overline{\pi}.
\end{equation}
We can now prove the main result of this section.
\begin{prop}\label{propHeight}
Let $p$ is a prime of the form \eqref{pFI}, let $\pi$ be as in \eqref{pim}, let $\sqrt{\pi}$ be as in \eqref{pisq}, and let $\varpi$ be as in \eqref{varpi}. Let $\sqrt{\varpi}$ denote a square root of $\varpi$. Then $H_4(\sqrt{\varpi})$ is the $8$-Hilbert class field of $\QQP$.
\end{prop}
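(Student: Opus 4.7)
The strategy is to invoke Lemma~\ref{lemH2n} with $n=3$, taking $A_{2^{n-2}} = \QQ(i)$, $A_{2^{n-1}} = A_4 = \QQ(i,\sqrt{\pi})$, and $\varpi = \varpi_0$. The inclusion $A_4 \subset H_4$ is built into the construction, and Lemma~\ref{ptunram} together with $\pi \equiv 1 \pmod{\mm^5}$ shows that $(1+i)$ is unramified in $A_4/\QQ(i)$. Equation~\eqref{normvarpi} gives $\varpi_0 \varpi_2 = -\overline{\pi}$, hence $N_{A_4/\QQ}(\varpi_0) = p$; since $p$ is prime, $\varpi_0$ generates a prime of $A_4$ lying above $p$ with ramification and inertia indices equal to $1$. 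Finally, $\sqrt{\varpi_0 \varpi_2} = \sqrt{-\overline{\pi}} = \pm\sqrt{-p}/\sqrt{\pi}$ sits in $H_4 \setminus H_2$, so $H_4 = H_2(\sqrt{\varpi_0\varpi_2}) = A_4(\sqrt{\varpi_0\varpi_2})$. The work therefore reduces to checking hypotheses $(N)$ and $(U_2)$.

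For $(N)$, I would identify the four $\QQ$-Galois conjugates of $\varpi_0$ as $\varpi_0, \varpi_2$ and $\varpi_0' := c(1-i)+\sqrt{\overline{\pi}}$, $\varpi_2' := c(1-i)-\sqrt{\overline{\pi}}$, where $\sqrt{\overline{\pi}} := \sqrt{p}/\sqrt{\pi}$. Alongside $\sqrt{\varpi_0\varpi_2} = \sqrt{-\overline{\pi}} \in H_4$, a parallel computation gives $\varpi_0'\varpi_2' = -\pi$ and hence $\sqrt{\varpi_0'\varpi_2'} = i\sqrt{\pi} \in H_4$. It therefore suffices to exhibit a square root of $\varpi_0\varpi_0'$ inside $H_4$. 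Direct expansion yields
\begin{equation*}
\varpi_0\varpi_0' = 2c^2 + \sqrt{p} + c\bigl[(1-i)\sqrt{\pi} + (1+i)\sqrt{\overline{\pi}}\bigr],
\end{equation*}
and here the hypothesis $b = c^2$ enters decisively: the identity $(2c^2 + \sqrt{p})^2 - p = 4c^2(c^2 + \sqrt{p})$ recasts the right-hand side as $\bigl(c + \sqrt{c^2 + \sqrt{p}}\bigr)^2$. The auxiliary root $\sqrt{c^2+\sqrt{p}}$ lies in $H_4$ because $(\sqrt{\pi} + i\sqrt{\overline{\pi}})^2 = \pi - \overline{\pi} + 2i\sqrt{p} = 2i(c^2+\sqrt{p})$, yielding $\sqrt{c^2+\sqrt{p}} = (\sqrt{\pi}+i\sqrt{\overline{\pi}})/(1+i) \in H_4$. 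Hence all four square roots of $\QQ$-conjugates of $\varpi_0$ lie in $H_4(\sqrt{\varpi_0})$, establishing $(N)$.

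For $(U_2)$, since $\pi$ is a square in $\QQ_2(i)$, the prime $(1+i)$ splits in $A_4/\QQ(i)$ into two primes, each with completion $\QQ_2(i)$. At each completion $\sqrt{\pi}$ embeds as one of the two local square roots $\pm(1+s_3 m^3 + O(m^4))$, with $m = 1+i$ and $s_3 \in \FF_2$ determined by $\pi \bmod \mm^6$. Expanding $\pi - 1 = (a-1) + c^2 i$ in powers of $m$ using $a + c^2 \equiv 1 \pmod 8$, I would check that $\pi - 1 \in \mm^6$ when $c \equiv 0 \pmod 4$ (so $s_3 = 0$ and also $cm \in \mm^5$), while $\pi - 1 \equiv -m^5 \pmod{\mm^6}$ when $c \equiv 2 \pmod 4$ (so $s_3 \equiv 1 \pmod \mm$, which exactly compensates the leading $cm \in \mm^3 \setminus \mm^4$ term). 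In either case $\varpi_0 \equiv \pm 1 \pmod{\mm^4}$ at both primes, and Lemma~\ref{ptunram} yields the desired unramification. With every hypothesis of Lemma~\ref{lemH2n} verified, we conclude $H_8 = H_4(\sqrt{\varpi_0})$.

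The algebraic miracle I expect to be the real obstacle — and the crux of the whole proof — is the identity $\varpi_0\varpi_0' = (c+\sqrt{c^2+\sqrt{p}})^2$ used in $(N)$: it holds precisely because $b$ is a perfect square, which is exactly why the method succeeds for $p = a^2+c^4$ but does not immediately extend to all primes with $8 \mid h$.
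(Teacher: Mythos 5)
Your proposal is correct and takes essentially the same approach as the paper: invoke Lemma~\ref{lemH2n} with $n=3$, and verify $(N)$ by showing $\varpi_0\varpi_1$ is a square in $H_4$ and $(U_2)$ by the same two-case $\mm$-adic computation. Your identity $\varpi_0\varpi_0'=(c+\sqrt{c^2+\sqrt p})^2$ is the same as the paper's $(c+d+e)^2$ under $d+e=\sqrt{c^2+\sqrt p}$ (the paper's printed ``$(c+di-ei)^2$'' appears to be a typo); your observation about where $b=c^2$ enters is accurate and well placed.
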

\begin{proof}
We again use Lemma \ref{lemH2n}, but this time with $n = 3$, $A_4 = \QQ(i, \sqrt{\pi})$ and $\varpi = \varpi_0$. All of the hypotheses except for $(U_2)$ and $(N)$ immediately follow from the identity \eqref{normvarpi}.

\begin{center} 
\begin{tikzpicture}
  \draw (0, 1.5) node[]{$H_8 = \QQ(i, \sqrt{p}, \sqrt{\pi}, \sqrt{\alpha})$};
	\draw (0, 0.25) -- (0, 1.25);
	\draw (-4, 0) node[]{$A_4 = \QQ(i, \sqrt{\pi}, \sqrt{\alpha})$};
	\draw (-8, 0) node[]{$\overline{A}_4 = \QQ(i, \sqrt{\pi}, \sqrt{\beta})$};
	\draw (0, 0) node[]{$H_4 = \QQ(i, \sqrt{p}, \sqrt{\pi})$};
  \draw (0, -1.5) node[]{$H_2 = \QQ(i, \sqrt{p})$};
	\draw (-3, -1.5) node[]{$A_2 = \QQ(i, \sqrt{\pi})$};
  \draw (0, -3) node[]{$\QQ(\sqrt{-p})$};
	\draw (-3, -3) node[]{$A_1 = \QQ(i)$};
	\draw (0, -4.5) node[]{$\QQ$};
  \draw (0, -4.25) -- (0, -3.25);
	\draw (0, -2.75) -- (0, -1.75);
	\draw (0, -1.25) -- (0, -0.25);
  \draw (-0.25, -4.35) -- (-2.75, -3.25);
	\draw (-3, -2.75) -- (-3, -1.75);
	\draw (-2.75, -1.25) -- (-0.5, -0.25);
	\draw (-3, -1.25) -- (-4, -0.25);
	\draw (-3.25, -1.25) -- (-8, -0.25);
	\draw (-4, 0.25) -- (-0.5, 1.25);
	\draw (-7.5, 0.25) -- (-1.5, 1.25);
	\draw (-2.2, -2.8) -- (-0.5, -1.75);
\end{tikzpicture}
\end{center}

We now prove hypothesis $(N)$. For $0\leq m\leq 3$, set $\varpi_m = \sigma^m(\varpi)$, where $\sigma$ is a generator for $\Gal(H_4/\QQP)$. We claim that $H_4(\sqrt{\varpi_0})$ is the splitting field of the polynomial
$$f_8(X) = (X^2-\varpi_0)(X^2-\varpi_1)(X^2-\varpi_2)(X^2-\varpi_3).$$
It is easy to see that $\varpi_0\varpi_2 = -\overline{\pi}$ and $\varpi_1\varpi_3 = -\pi$ are squares in $H_4$. To prove $(N)$, it now suffices to show that $\varpi_0\varpi_1$ is a square in $H_4$. Write $\sqrt{\pi} = d+ei$ with
$$d = \frac{\sqrt{\pi}+\sqrt{\sigma(\pi)}}{2},\ e=\frac{\sqrt{\pi}-\sqrt{\sigma(\pi)}}{2i}\in H_4.$$ 
One can now check that $\varpi_0\varpi_1 = (c+di-ei)^2$, which proves hypothesis $(N)$.
\\\\
It remains to prove hypothesis $(U_2)$. The assumption that $\pi\equiv 1\pmod{\mm^5}$ actually means that $\pi$ is a square in $\QQ_2(i)$, i.e. that $(1+i)$ splits in $A_4$. Hence it remains to show that $\QQ_2(i, \sqrt{\varpi_0})$ is unramified over $\QQ_2(i)$, and Lemma \ref{ptunram} implies that it is enough to prove that $\varpi_0 \equiv \pm 1\pmod{\mm^4}$.
\\\\
Recall from \eqref{pisq} that $\sqrt{\pi}\equiv 1\pmod{\mm^3}$. Thus $\sqrt{\pi}\equiv 1$ or $1 +m^3 \pmod{\mm^4}$. Squaring, we find that $\pi \equiv 1$ or $1+m^5\pmod{\mm^6}$, respectively.
\\\\
The two cases above correspond to the residue class of $c$ modulo $4$. First recall that $a+b\equiv 1\bmod 8$, i.e. $a+c^2\equiv 1\pmod{\mm^6}$. In the first case, if $c\equiv 0 \pmod{\mm^4}$, then $c^2\in\mm^6$, so $a-1\in\mm^6$ as well. Then $\pi = a+c^2i\equiv 1\pmod{\mm^6}$, which means that $\sqrt{\pi}\equiv 1\pmod{\mm^4}$. Then
$$\varpi_0 = c(1+i)+\sqrt{\pi} \equiv 1 \pmod{\mm^4}.$$
In the second case, $c \equiv 2 \pmod{\mm^4}$, so that $c^2\equiv -m^4\pmod{\mm^6}$. Then $a-1+m^4\in\mm^6$, so that $\pi = a+c^2i\equiv 1-m^4-m^4i \equiv 1+m^4(-1-i)\equiv 1 + m^5\pmod{\mm^6}$. This means that $\sqrt{\pi}\equiv 1 + m^3\pmod{\mm^4}$. Finally,
$$\varpi_0 = \sqrt{\pi} + c(1+i) \equiv 1 + m^3 + m^3 \equiv \pm 1 \pmod{\mm^4},$$
which proves that $\QQ_2(i, \sqrt{\varpi_0})$ is unramified over $\QQ_2(i)$.
\end{proof}

\section{Proof of Theorem 1}
The proof of Theorem 1 will proceed in much the same way as the last part of the proof of Proposition \ref{propHeight}. Now, instead of showing that $\QQ_2(i, \sqrt{\varpi_0})$ is unramified over $\QQ_2(i)$, we must decide when this extension is trivial (i.e. when $\pt$ splits completely in $H_8$) and when it is unramified of degree $2$ (i.e. when $\pt$ does not split completely in $H_8$). This is equivalent to determining when $\varpi_0$ is a square in $\QQ_2(i)$.
\\\\
We will distinguish between two cases as above. The first case is when $c\equiv 0\pmod{4}$, i.e. $c\in\mm^4$. Recall from above that then $a\equiv 1\pmod{8}$ and $\sqrt{\pi}\equiv 1\pmod{\mm^4}$. 
\\\\
To check whether or not $\varpi_0$ is a square in $\QQ_2(i)$, we must compute $\varpi_0$ modulo $\mm^5$. Since $c\equiv 0\pmod{4}$, we deduce that $\varpi_0\equiv \sqrt{\pi}$ modulo $\mm^5$. Thus, we must determine conditions on $a$ such that $\sqrt{\pi}\equiv \pm 1\pmod{\mm^5}$, and for this, by Hensel's lemma, it is necessary to determine $\pi$ modulo $\mm^7$. Hence, assuming $c\equiv 0\pmod{4}$,
\begin{tabbing}
   \hspace{1.5in}\= \hspace{0.5in}\= \hspace{0.5in}\= \kill
   \>$16|h$ \> $\Longleftrightarrow$ \> $\sqrt{\pi}\equiv \pm 1\pmod{\mm^5}$\\ 
	 \>\> $\Longleftrightarrow$\> $\pi\equiv 1\pmod{\mm^7}$\\
	 \>\> $\Longleftrightarrow$\> $a\equiv 1\pmod{16}$.
\end{tabbing}\vspace{0pt}
This proves parts (i) and (iii) of Theorem 1.
\\\\
We handle the second case similarly.  Now $c\equiv 2\pmod{4}$, $a\equiv 5\pmod{8}$ and $\sqrt{\pi}\equiv 1+m^3\pmod{\mm^4}$. Then $\alpha\equiv 2m + \sqrt{\pi}$ modulo $\mm^5$ and so we must determine conditions on $a$ such that $\sqrt{\pi}\equiv \pm 1 -2m\pmod{\mm^5}$. Under the current assumptions,
\begin{tabbing}
   \hspace{1.5in}\= \hspace{0.5in}\= \hspace{0.5in}\= \kill 
   \>$16|h$ \> $\Longleftrightarrow$ \> $\sqrt{\pi}\equiv \pm 1-2m\pmod{\mm^5}$\\ 
	 \>\> $\Longleftrightarrow$\> $\pi\equiv 1+m^5+m^6\pmod{\mm^7}$\\
	 \>\> $\Longleftrightarrow$\> $a\equiv -3\pmod{16}$.
\end{tabbing}\vspace{0pt}
Note that because of the choice \eqref{pim} we have actually shown the theorem for $a\equiv 1\pmod{4}$. If $p = a^2+c^4$ with $a\equiv 3\pmod{4}$, then $p = (-a)^2+c^4$ with $-a\equiv 1\pmod{4}$, so that the other cases can be deduced immediately. This finishes the proof of Theorem 1. 

\section{Overview of the proof of Proposition 1}
In \cite{FI1}, Friedlander and Iwaniec prove an asymptotic formula for the number of primes of the form $a^2+c^4$, that is, primes of the form $a^2+b^2$ where $b$ itself is a square. For a summary of their proof, see the exposition in \cite[Chapter 21]{FI3}. They use a new sieve that they developed to detect primes in relatively thin sequences \cite{FI2}. This sieve has its roots in the work of Fouvry and Iwaniec \cite{FoIw}, where they used similar sieve hypotheses to give an asymptotic formula for the number of primes of the form $a^2+b^2$ where $b$ is a prime.
\\\\
The purpose of the following three sections is to demonstrate that the method of Friedlander and Iwaniec is robust enough to incorporate congruence conditions on $a$ and $c$. While we are convinced that Proposition 1 remains true when $a$ and $c$ satisfy reasonable congruence conditions modulo any positive integers $q_1$ and $q_2$, respectively, the technical obstacles necessary to insert the congruence condition for $c$ are cumbersome. Hence we will restrict ourselves to the case $q_2 = 4$.  
\\\\
The proof of Proposition 1 involves certain alterations in the way that the sieve \cite{FI2} is used. For this reason, we first briefly recall the inputs and the output of the sieve.

\subsection{Asymptotic sieve for primes}
Suppose $(a_n)$ ($n\in \NN$) is a sequence of non-negative real numbers. Then the asymptotic sieve for primes developed in \cite{FI2} yields an asymptotic formula for 
$$S(x) = \sum_{\substack{p\leq x \\ p\text{ prime}}}a_p\log p$$
provided that the sequence $(a_n)$ satisfies several hypotheses, all but two of which are not difficult to verify. To state them, we first need to fix some terminology. For $d\geq 1$, let 
$$A_d(x) = \sum_{\substack{ n\leq x\\ n\equiv 0\bmod d }}a_n$$
and let $A(x) = A_1(x)$. Moreover, let $g$ be a multiplicative function, and define the \textit{error term} $r_d(x)$ by the equality
\begin{equation}\label{rdx}
A_d(x) := g(d)A(x) + r_d(x).
\end{equation}
The hypotheses which are not difficult to verify are listed in equations (2.1)-(2.8) in \cite{FI1}. We briefly recall them here. We assume the bounds
\begin{equation}\tag{H1}
A(x)\gg A(\sqrt{x})(\log x)^2
\end{equation}
and
\begin{equation}\tag{H2}
A(x)\gg x^{\frac{1}{3}}\left(\sum_{n\leq x}a_n^2\right)^{\frac{1}{2}}.
\end{equation}
We assume that the multiplicative function $g$ satisfies 
\begin{equation}\tag{H3}
0\leq g(p^2)\leq g(p)\leq 1,
\end{equation}
\begin{equation}\tag{H4}
g(p)\ll p^{-1},
\end{equation}
and
\begin{equation}\tag{H5}
g(p^2)\ll p^{-2}.
\end{equation}
We also assume that for all $y\geq 2$, 
\begin{equation}\tag{H6}
\sum_{p\leq y}g(p) = \log\log y + c + O((\log y)^{-10}),
\end{equation}
where $c$ is a constant depending only on $g$; this is the linear sieve assumption. Finally, we assume the bound
\begin{equation}\tag{H7}
A_d(x)\ll d^{-1}\tau(d)^8A(x)
\end{equation}
uniformly in $d\leq x^{\frac{1}{3}}$; here $\tau$ is the divisor function.  
\\\\
Now we state the two hypotheses which are more difficult to verify. The first is a classical sieve hypothesis; it is a condition on the average value of the error terms $r_d(x)$. Let $L = (\log x)^{2^{24}}$. 
\begin{hypoR}
There exists $x_r>0$ and $D = D(x)$ in the range
\begin{equation}\label{Drange}
x^{\frac{2}{3}}<D<x
\end{equation}
such that for all $x\geq x_r$, we have 
\begin{equation}\tag{R}
\sum_{\substack{d\text{ cubefree} \\ d\leq DL^2}}|r_d(t)|\leq A(x)L^{-2}
\end{equation}
uniformly in $t\leq x$.
\end{hypoR}
In our applications, $D$ will be $x^{3/4-\varepsilon}$ for a sufficiently small $\varepsilon$. This condition about \textit{remainders} will be called condition (R). 
\\\\
The second is a complicated condition on bilinear forms in the elements of the sequence $(a_n)$ weighed by truncated sums of the M\a"obius function
\begin{equation}\label{betanc}
\beta(n, C) = \mu(n)\sum_{c|n,\ c\leq C}\mu(c).
\end{equation}
It is designed to make sure that the sequence $(a_n)$ is orthogonal to the M\a"obius function; this is crucial in overcoming the parity problem. We now state this hypothesis, named (B) for \textit{bilinear}.
\begin{hypoB}
Suppose (R) is satisfied for $x_r$ and $D = D(x)$. Then there exists $x_b>x_r$ such that for every $x>x_b$, there exist $\delta$, $\Delta$, and $P$ satisfying
$$2\leq \delta\leq \Delta,$$
$$2\leq P\leq \Delta^{1/2^{35}\log\log x},$$
and such that for every $C$ with 
$$1\leq C\leq xD^{-1},$$
and for every $N$ with 
$$\Delta^{-1}\sqrt{D}<N<\delta^{-1}\sqrt{x},$$
we have
\begin{equation}\tag{B}
\sum_{m}\left|\sum_{\substack{N\leq n\leq 2N \\ mn\leq x \\ (n, m\Pi) = 1}}\beta(n, C)a_{mn}\right|\leq A(x)(\log x)^{-2^{26}},
\end{equation}
where 
\begin{equation}\label{pi}
\Pi = \prod_{p\leq P}p.
\end{equation}
\end{hypoB}
Note that establishing condition (R) for a larger $D$ decreases the range of $C$ and $N$ for which we have to verify condition (B).  
\\\\
The main result of \cite{FI2} is
\begin{theorem}\label{ASP}
Assuming hypotheses (H1)-(H7), (R), and (B), we have
$$S(x) = HA(x)\left(1+O\left(\frac{\log \delta}{\log \Delta}\right)\right),$$
where $H$ is the positive constant given by the convergent product 
$$H = \prod_p(1-g(p))\left(1-\frac{1}{p}\right)^{-1}$$
and the constant implied in the O-symbol depends on the function $g$ and the constants implicit in (H1), (H2), and (H7). 
\end{theorem}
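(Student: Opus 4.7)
The strategy is to express $S(x) = \sum_{p \le x} a_p \log p$ via a combinatorial identity for $\Lambda(n)$ (Vaughan's, or Heath--Brown's) which decomposes the sum into two kinds of pieces: Type I sums of the form $\sum_{d \le D}\alpha_d A_d(x)$ with smooth coefficients $\alpha_d$, and Type II (bilinear) sums $\sum_{m}\sum_{n}\alpha_m \mu(n) a_{mn}$ with both $m$ and $n$ of moderate size. The contribution of prime powers is negligible thanks to (H2), so the combinatorial first step reduces the analysis of $S(x)$ to estimating sums of these two shapes.

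For the Type I sums, the plan is to substitute $A_d(x) = g(d)A(x) + r_d(x)$ from \eqref{rdx}. The resulting main term $A(x)\sum_d \alpha_d g(d)$ is evaluated using the linear-sieve condition (H6) together with (H3)--(H5), producing the Euler product $H$; the convergence of $H$ and the smoothing of $\alpha_d g(d)$ into a product over primes is where most of the explicit main-term calculation lives. The error $\sum_d \alpha_d r_d(x)$ is precisely what (R) is designed to control: summed over cubefree $d \le DL^2$, it is $O(A(x)L^{-2})$, while (H7) lets one dispose of non-cubefree $d$ and of the boundary of the $d$-range.

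For the Type II sums, a naive Cauchy--Schwarz only recovers a constant multiple of $A(x)$, so the extra logarithmic saving must come from (B). But (B) is phrased with the truncated M\"obius $\beta(n,C)$ defined in \eqref{betanc}, together with the coprimality condition $(n, m\Pi) = 1$, whereas Heath--Brown produces plain $\mu(n)$ without these refinements. The bridge is Buchstab's iteration: repeatedly peeling off the smallest prime factor of $n$ recasts $\mu(n)$ as a combination of $\beta(\cdot, C)$-type terms of shorter length, and forces coprimality with $\Pi$. One iterates until the range of the remaining variable lies inside the window $\Delta^{-1}\sqrt{D} < N < \delta^{-1}\sqrt{x}$ where (B) applies, after which (B) yields the desired saving of $(\log x)^{-2^{26}}$ per bilinear piece.

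The principal obstacle is the \emph{parity problem}: no purely combinatorial sieve distinguishes primes from products of two primes, so (H1)--(H7) and (R) alone cannot produce an asymptotic for $S(x)$. Hypothesis (B) is the precise input that defeats parity, by asserting cancellation of $(a_n)$ against $\mu$ in bilinear sums of the right shape. The delicate technical work is therefore to arrange that Heath--Brown plus Buchstab produces pieces whose $(m,n)$-support lands in the narrow window of (B), while simultaneously keeping all Type I remainders inside the cubefree range of (R), with the parameters $D$, $C$, $N$, $\delta$, $\Delta$, and the prime-cutoff $P$ from \eqref{pi} kept compatible. This interlocking bookkeeping is what makes \cite{FI2} genuinely long and technical, even though each individual reduction is conceptually clean; the ultimate error $O(\log\delta / \log\Delta)$ reflects the cost of the Buchstab iteration through its levels.
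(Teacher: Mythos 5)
This theorem is not proved in the paper at all: the paper introduces it as ``the main result of \cite{FI2}'' and uses it as a black box. So there is nothing in the paper to compare your sketch against; you have written an outline of a theorem that the authors cite rather than prove.

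As a sketch of the actual argument in \cite{FI2}, your outline is roughly right at the philosophical level --- (R) controls the linear (Type~I) contribution, (B) is what breaks parity in the bilinear range, the prime-power contribution is negligible by (H2), and the constant $H$ emerges from (H3)--(H6) --- but the combinatorial skeleton you describe is not the one Friedlander and Iwaniec use. They do not begin from Vaughan's or Heath--Brown's identity and then run a Buchstab iteration to manufacture the truncated M\"obius function. Instead they start directly from $\Lambda = \mu * \log$, split the support of $\mu$ by the size of divisors, and use the elementary identity $\sum_{c \mid n}\mu(c)=0$ for $n>1$ to pass between the short- and long-divisor ranges; the weight $\beta(n,C)=\mu(n)\sum_{c\mid n,\ c\le C}\mu(c)$ comes straight from this truncation, not from peeling off prime factors. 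Similarly, the coprimality restriction $(n,m\Pi)=1$ in (B) comes from an initial fundamental-lemma sieve that removes small prime factors, and the final error $O(\log\delta/\log\Delta)$ is tied to the ratio of the endpoints of the bilinear window $(\Delta^{-1}\sqrt{D},\ \delta^{-1}\sqrt{x})$ in (B) rather than to a Buchstab depth count. If you intend to actually reproduce the proof you should follow the decomposition of $S(x)$ in \cite{FI2} itself rather than importing a generic prime-detection identity.
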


\subsection{Preparing the sieve for Proposition 1}
For our application, we will denote by $v'$ the analogue of a quantity $v$ from the proof of Friedlander and Iwaniec in \cite{FI1}. We take $(a'_n)$ to be the following sequence. Suppose $q_1$ and $q_2$ are positive integers and let $q$ denote the least common multiple of $q_1$ and $q_2$. We say that a pair of congruence classes 
$$a_0\bmod q_1\ \ \ \ \ \ \ \ c_0 \bmod q_2$$ 
is \textit{admissible} if for every pair congruence classes
$$a_1\bmod q\ \ \ \ \ \ \ \ c_1 \bmod q$$
such that $a_1\equiv a_0\bmod q_1$ and $c_1\equiv c_0\bmod q_2$, the congruence class $a_1^2+c_1^4 \bmod q$ is a unit modulo $q$.
\begin{example}
Suppose that $a_0\in\{1, 3, 5, 7, 9, 11, 13, 15\}$ and $c_0\in\{0, 2\}$. Then the pair of congruence classes $a_0\bmod 16$ and $c_0\bmod 4$ is admissible.
\end{example}
\begin{example}
Suppose that $a_0=c_0=1$. Then the pair of congruence classes $a_0\bmod 3$ and $c_0\bmod 2$ is \textit{not} admissible. Indeed, $1\equiv a_0 \equiv c_0 \bmod 6$ but $2\equiv 1^2+1^4 \bmod 6$ is not invertible modulo $6$. This does not mean, however, that there are no primes of the form $a^2+c^4$ with $a\equiv 1\bmod 3$ and $c\equiv 1\bmod 2$; one such prime is $4^2+1^4$.
\end{example}
Henceforth, suppose $q_1$ and $q_2$ are positive integers, let $q$ be the least common multiple of $q_1$ and $q_2$, and suppose $a_0\bmod q_1$ and $c_0\bmod q_2$ is an admissible pair of congruence classes. We define
\begin{equation}\label{anprime}
a'_n := \sumsum_{\substack{a,\ b\ \in\ \ZZ \\ a^2+b^2 = n \\ a\equiv a_0\bmod q_1}}\zz'(b),
\end{equation}
where
\begin{equation}\label{zzprime}
\zz'(b) := \sum_{\substack{c\in\ZZ \\ c^2 = b \\ c\equiv c_0\bmod q_2}}1.
\end{equation}
Let $g$ be the multiplicative function supported on cubefree integers defined in \cite[Equation 3.16, p.961]{FI1} as follows: let $\chi_4$ denote the character of conductor $4$; for $p\geq 3$ set
$$g(p)p = 1+\chi_4(p)\left(1-\frac{1}{p}\right)$$
and
$$g(p^2)p^2 = 1+(1+\chi_4(p))\left(1-\frac{1}{p}\right);$$
finally, set $g(2) = \frac{1}{2}$ and $g(4) = \frac{1}{4}$.
For our extension, we define a multiplicative function $g'$ by setting 
$$
g'(n) =
\begin{cases}
g(n) & \text{if }(n, q) = 1 \\
0 & \text{otherwise.}
\end{cases}
$$
Then, provided that (H1)-(H7), (R), and (B) are satisfied with $\delta$ a large power of $\log x$ and $\Delta$ a small power of $x$, the asymptotic formula given by the sieve (see Theorem \ref{ASP}) is 
\begin{equation}\label{asymp}
S'(x) := \sum_{\substack{p\leq x\\p\text{ prime}}}a'_p\log p = c(q_1, q_2)\frac{16\kappa}{\pi} x^{3/4}\left(1+O\left(\frac{\log\log x}{\log x}\right)\right)
\end{equation}
where 
$$c(q_1, q_2) = \frac{1}{q_1q_2}\prod_{p|q}(1-g(p))^{-1}$$
and $\kappa$ is the integral given in the statement of Proposition 1. Note that the sieve applied to the original sequence $(a_n)$ from \cite{FI1}, with
\begin{equation}\label{an}
a_n = \sumsum_{\substack{a,\ b\ \in\ \ZZ \\ a^2+b^2 = n}}\zz(b),
\end{equation}
where
\begin{equation}\label{zz}
\zz(b) = \sum_{\substack{c\in\ZZ \\ c^2 = b}}1,
\end{equation}
yields the asymptotic formula
$$S(x) = \frac{16\kappa}{\pi} x^{3/4}\left(1+O\left(\frac{\log\log x}{\log x}\right)\right)$$
(see \cite[Theorem 1, p.946]{FI1}). Thus $c(q_1, q_2)$ can be interpreted as the density of primes of the form $a^2+c^4$ such that $a\equiv a_0\bmod q_1$ and $c\equiv c_0\bmod q_2$ within the set of all primes of the form $a^2+c^4$. 
\begin{remark}
Throughout the following two sections, we regard $q_1$ and $q_2$ as fixed constants, and so the implied constants in every bound we give may depend on $q_1$ and $q_2$, even if this dependence is not explicitly stated. Thus, whenever we state ``the implied constant is absolute,'' the implied constant may actually depend on $q_1$ and $q_2$. In our application $q_1 = 16$ and $q_2 = 4$, so we are not concerned with uniformity of the above asymptotic formula with respect to $q_1$ and $q_2$. 
\end{remark}
It is obvious that our modified sequence $(a'_n)$ satisfies (H1)-(H7) for the same reasons as the original sequence $(a_n)$. We will prove that $(a'_n)$ above satisfies condition (R) for general $q_1$ and $q_2$. The congruence condition on $c$ is more difficult to insert into the proof of condition (B), so we prove condition (B) only for the special case where $q_2 = 4$ and $c_0\in\{0, 2\}$.  
   
\section{Proof of condition (R)}
Here we closely follow and refer to the arguments laid out in \cite[Section 3, p.955-962]{FI1}. Define
$$A'_d(x) := \sum_{\substack{n\leq x\\ n\equiv 0\bmod d}}a'_n$$
and
$$A'(x):= A'_1(x).$$
The goal is to check that the error terms $r'_d(x)$ defined by 
\begin{equation}\label{defrdx}
r'_d(x) :=  A'_d(x) - g'(d)A'(x)
\end{equation}
are small on average. To do this, we will prove an analogue of \cite[Lemma 3.1, p.956]{FI1}, with $M_d(x)$ (representing the \textit{main term} and defined in \cite[p.955]{FI1}) replaced by
$$M'_d(x) = \frac{1}{dq_1}\sumsum_{0<a^2+b^2\leq x}\zz'(b)\rho(b; d)\ \ \ \ \ \ \text{ if }(d, q) = 1$$
and $M'_d(x) = 0$ otherwise; here $\rho(b; d)$ is defined as in \cite[p.955]{FI1}, i.e. it is the number of solutions $\alpha\bmod d$ to
$$\alpha^2+b^2\equiv 0\bmod d.$$
We separate the case when $d$ is not coprime to $q$ because in this case $A'_d(x) = 0$. This follows because the pair of congruences $a_0\bmod q_1$ and $c_0\bmod q_2$ is admissible and hence $a'_n$ is supported on $n$ coprime to $q$. The lemma we wish to prove is now identical to \cite[Lemma 3.1, p.956]{FI1}.
\begin{lemma}\label{lem31}
For any $D\geq 1$, any $\varepsilon>0$, and any $x\geq 2$, we have
$$\sum_{d\leq D}|A'_d(x)-M'_d(x)|\ll D^{\frac{1}{4}}x^{\frac{9}{16}+\varepsilon},$$
where the implied constant depends only on $\varepsilon$.
\end{lemma}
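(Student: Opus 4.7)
The plan is to adapt the proof of \cite[Lemma 3.1]{FI1} to the twisted sequence $(a'_n)$, the modifications being essentially bookkeeping. First we reduce to the coprime case: since the pair $(a_0 \bmod q_1,\ c_0 \bmod q_2)$ is admissible, $a^2+c^4$ is coprime to $q$ whenever $a \equiv a_0 \bmod q_1$ and $c \equiv c_0 \bmod q_2$. Consequently $A'_d(x) = 0$ when $(d,q) > 1$, matching the convention $M'_d(x) = 0$, and the bound is trivial. From now on we assume $(d, q) = 1$, so in particular $(d, q_1) = 1$.

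Unfolding the definitions \eqref{anprime} and \eqref{zzprime} gives
$$A'_d(x) = \sumsum_{\substack{a, c \in \ZZ \\ a^2 + c^4 \leq x \\ a \equiv a_0\,(q_1),\ c \equiv c_0\,(q_2) \\ d \mid a^2 + c^4}} 1.$$
For each fixed $c \equiv c_0 \bmod q_2$ with $|c| \leq x^{1/4}$, the condition $d \mid a^2 + c^4$ places $a$ into one of $\rho(c^2; d)$ residue classes modulo $d$, and the Chinese Remainder Theorem (using $(d, q_1) = 1$) combines each such class with the class $a_0 \bmod q_1$ into a single residue class modulo $dq_1$. Counting lattice points of the interval $|a| \leq \sqrt{x-c^4}$ lying in these progressions, the leading term is precisely $M'_d(x)$, and what remains is to control the error.

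To bound $\sum_{d \leq D}|A'_d(x) - M'_d(x)|$ I would run the Friedlander-Iwaniec argument verbatim, with the single alteration that the modulus $d$ is replaced throughout by $dq_1$, since our $a$-counts are in progressions modulo $dq_1$ rather than $d$. Their proof decomposes the error by writing $a^2+c^4$ as a norm from $\ZZ[i]$, splits the resulting sum according to the factorization type of the ideal $(a+ci^2)$, and invokes standard divisor-function and cubefree-integer estimates to obtain the bound $D^{1/4}x^{9/16+\varepsilon}$. Because $q_1$ and $q_2$ are fixed, the extra factors they introduce -- $1/q_1$ in the main term $M'_d(x)$ and an index-$q_2$ restriction of the outer $c$-sum to a sublattice of $\ZZ$ -- alter each intermediate estimate only by constants absorbed into the implied constant, consistent with the convention of the remark in Section 4.2.

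The main obstacle is therefore not the insertion of the congruence conditions, which is routine, but the underlying lattice-point estimate of Friedlander and Iwaniec that yields the crucial savings $D^{1/4}$; that estimate I would invoke as a black box rather than reprove. The verification that the congruence twists preserve all structural features (coprimality $(d,q_1)=1$ enabling CRT, sublattice restrictions of fixed finite index for $c$) is the key bookkeeping task, and once it is carried out the error bound transfers without change.
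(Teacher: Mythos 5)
Your opening moves match the paper: reduce to $(d,q)=1$ since admissibility forces $A'_d(x)=0$ otherwise, unfold the definitions, and use CRT (valid since $(d,q_1)=1$) to merge $a\equiv\alpha\bmod d$ with $a\equiv a_0\bmod q_1$ into a single progression modulo $dq_1$. But your central claim — that one can then ``run the Friedlander--Iwaniec argument verbatim, with the single alteration that the modulus $d$ is replaced throughout by $dq_1$'' — is where the proposal breaks down, and it glosses over precisely the point the paper has to work to resolve. The FI proof of their Lemma 3.1 is not the ideal-factorization argument you describe; it proceeds by smoothing $A'_d(x)$ to $A'_d(f)$, applying Poisson summation in the $a$-variable, and reducing the off-zero Fourier modes to a bilinear estimate (their Lemma 3.3) whose engine is the large-sieve inequality \eqref{lsi0} over the rationals $\nu/d\bmod 1$ with $\nu^2+1\equiv 0\bmod d$. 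That inequality is not a black box that tolerates swapping $d$ for $dq_1$: it relies on the Fouvry--Iwaniec well-spacing of the points $\nu/d$ (pairwise gaps of size roughly $1/D$ rather than $1/D^2$), and after the CRT step the relevant points become $\nu'/dq_1$ with $\nu'$ determined by $\nu\bmod d$ and $a_0\bmod q_1$. It is not at all clear that these remain well-spaced, and the divisibility constraint is still $\nu^2+1\equiv 0\bmod d$, not modulo $dq_1$, so one cannot simply relabel the modulus.

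The paper's resolution is a specific extra device you would need to supply: in the exponential sum $\sum_n\alpha_n\, e(\nu' n/dq_1)$ one splits the Poisson-dual variable $n$ into residue classes $n\equiv n_0\bmod q_1$, writes $n=mq_1+n_0$, observes that $e(\nu' n/dq_1)=e(\nu' m/d)\,e(\nu' n_0/dq_1)$ where the second factor is independent of $m$ and $e(\nu' m/d)=e(\nu m/d)$, and thereby falls back onto the original inequality \eqref{lsi0} applied to the shorter sums $\sum_m\alpha_{mq_1+n_0}\,e(\nu m/d)$. This costs only a factor $q_1$, which is acceptable since $q_1$ is fixed. Without this reduction your ``verbatim'' transfer has a genuine hole; the congruence twist on $a$ is not merely bookkeeping once one is inside the large-sieve step, and naming that obstacle and its fix is the actual content of the proof.
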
 
This result is useful because it is easy to obtain an asymptotic formula for $M'_d(x)$ where the coefficient of the leading term is, up to a constant, a nice multiplicative function of $d$. In fact, let $h$ be the multiplicative function supported on cubefree integers defined in \cite[(3.16), p.961]{FI1} by
\begin{equation}\label{defh}
\begin{cases}
h(p)p = 1+2(1+\chi_4(p))\\
h(p^2)p^2 = p+2(1+\chi_4(p)),
\end{cases}
\end{equation}
and define a multiplicative function $h'$ by setting 
\begin{equation}\label{defhprime}
h'(n) =
\begin{cases}
h(n) & \text{if }(n, q) = 1 \\
0 & \text{otherwise.}
\end{cases}
\end{equation}
Then following the same argument as in the proof of \cite[Lemma 3.4, p.961]{FI1}, we get
\begin{lemma}\label{lem34}
For $d$ cubefree we have
$$M'_d(x) = g'(d)\frac{4\kappa x^{\frac{3}{4}}}{q_1q_2}+O\left(h'(d)x^{\frac{1}{2}}\right),$$
where $\kappa$ is the integral given in the statement of Proposition 1 and the implied constant is absolute. $\Box$
\end{lemma}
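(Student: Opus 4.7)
The plan is to follow the proof of \cite[Lemma 3.4]{FI1} step by step, keeping careful track of the two congruence conditions. If $(d,q) > 1$, admissibility of $(a_0 \bmod q_1, c_0 \bmod q_2)$ forces $a'_n = 0$ whenever $d \mid n$, hence $M'_d(x) = 0$; since also $g'(d) = h'(d) = 0$, the identity is trivial. So assume $(d, q) = 1$, which forces $(d, q_1) = (d, q_2) = 1$.

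Unpacking $\zz'$ and approximating $\#\{a \in \ZZ : a^2 \leq x - c^4\}$ by $2\sqrt{x - c^4} + O(1)$ (the $O(1)$ contribution is crudely bounded by $O(g'(d)x^{1/4})$, which is absorbed into the final error), one obtains
\[
M'_d(x) = \frac{2}{dq_1}\sum_{\substack{c \in \ZZ,\ c^4 \leq x \\ c \equiv c_0 \bmod q_2}}\rho(c^2; d)\,\sqrt{x - c^4} + O\bigl(h'(d)x^{1/2}\bigr).
\]
Split the $c$-sum into residue classes modulo $dq_2$: because $(d, q_2) = 1$, each class $\gamma_1 \bmod d$ pairs with $c_0 \bmod q_2$ via CRT into a unique class $\gamma(\gamma_1) \bmod dq_2$, on which $\rho(c^2; d)$ takes the constant value $\rho(\gamma_1^2; d)$. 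Poisson summation applied to each inner sum, carried out exactly as in \cite{FI1} but with modulus $dq_2$ in place of $d$, yields
\[
\sum_{\substack{c \equiv \gamma(\gamma_1) \bmod dq_2 \\ c^4 \leq x}}\sqrt{x - c^4} = \frac{2\kappa x^{3/4}}{dq_2} + R(\gamma_1; d, x),
\]
with $\sum_{\gamma_1 \bmod d}\rho(\gamma_1^2;d)|R(\gamma_1;d,x)| \ll d\,h(d)\,x^{1/2}$. Combining this with the arithmetic identity $\sum_{\gamma_1 \bmod d}\rho(\gamma_1^2;d) = d^2 g(d)$ --- which simply records that $g(d)$ is the density of zeros of $a^2 + c^4$ modulo $d$ --- gives
\[
M'_d(x) = \frac{2}{dq_1}\cdot\frac{2\kappa x^{3/4}}{dq_2}\cdot d^2 g(d) + O\bigl(h(d) x^{1/2}\bigr) = g'(d)\frac{4\kappa x^{3/4}}{q_1 q_2} + O\bigl(h'(d) x^{1/2}\bigr),
\]
which is the claim.

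The main obstacle is verifying that the Poisson-summation step of \cite{FI1}, originally performed modulo $d$, survives the replacement $d \mapsto dq_2$. Because $(d, q_2) = 1$ the additive characters and Gauss sums factor multiplicatively into $d$- and $q_2$-parts, and the fixed small modulus $q_2$ contributes only an absolute constant. All other ingredients --- the arithmetic identity for $\sum_{\gamma_1}\rho(\gamma_1^2; d)$, the bound on the oscillating remainder in terms of $h(d)$, and the handling of the boundary near $c = \pm x^{1/4}$ --- transfer from \cite{FI1} with no essential changes.
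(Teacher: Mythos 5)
The paper gives essentially no proof of this lemma; it simply defers to the argument of Lemma 3.4 in Friedlander--Iwaniec and records the result with a $\Box$. So the question is whether your reconstruction is itself correct, and there is a genuine gap in the key error estimate.

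You claim that after splitting the $c$-sum into residue classes modulo $dq_2$ and applying Poisson summation to each class, the remainders satisfy
$$\sum_{\gamma_1 \bmod d}\rho(\gamma_1^2;d)\,|R(\gamma_1;d,x)| \ll d\,h(d)\,x^{1/2}.$$
This cannot be true with the absolute values in place. Each individual $R(\gamma_1;d,x)$ is the discrepancy between a lattice sum over an arithmetic progression of modulus $dq_2$ and the corresponding integral of $\sqrt{x-c^4}$; the trivial (and essentially sharp, for unfavourable shifts near the endpoints $c = \pm x^{1/4}$) bound is $|R(\gamma_1;d,x)| \ll x^{1/2}$, and one cannot in general do better uniformly in $\gamma_1$. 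Since $\sum_{\gamma_1 \bmod d}\rho(\gamma_1^2;d) = d^2 g(d)$ (the identity you correctly invoke), taking $d=p\equiv 1\bmod 4$ gives $d^2 g(d) = 2p-1$ while $d\,h(d) = p\,h(p) = 5$. So the left-hand side is permitted to be as large as $\asymp p\,x^{1/2}$, whereas the claimed right-hand side is $\asymp x^{1/2}$; for $p$ growing with $x$ this is off by a factor of $p$. The same discrepancy persists for $d = p^2$ with $p\equiv 1\bmod 4$, where $d^2 g(d) \asymp 3p^2$ but $d\,h(d) \asymp p$.

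What actually saves the estimate is cancellation in the sum over $\gamma_1$. The correct claim is $\bigl|\sum_{\gamma_1 \bmod d}\rho(\gamma_1^2;d)\,R(\gamma_1;d,x)\bigr| \ll d\,h(d)\,x^{1/2}$, without the inner absolute values, and the clean way to obtain it avoids residue-class Poisson altogether: expand $\rho(c^2;d)$ multiplicatively as a \emph{short} linear combination of divisibility indicators (for instance, for $d=p\equiv 1\bmod 4$ one has $\rho(c^2;p) = 2 - \mathbb{1}_{p\mid c}$), reducing $\sum_{c\equiv c_0\bmod q_2}\rho(c^2;d)\sqrt{x-c^4}$ to $O(\tau_3(d))$ sums of the form $\sum_{c\equiv c_0\bmod q_2,\ e\mid c}\sqrt{x-c^4}$, each of which has error $O(x^{1/2})$ with a coefficient whose total size is controlled by $h(d)$. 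Alternatively, if one insists on Poisson over residue classes, one must interchange the sum over $\gamma_1$ with the sum over the dual frequency $k$ and exploit that $\sum_{\gamma_1\bmod d}\rho(\gamma_1^2;d)\,e(k\gamma(\gamma_1)/dq_2)$ is a complete exponential sum exhibiting strong cancellation for $(k,d)$ small. Either fix rescues the argument; as written, the bound is false and the proposal has a real hole precisely at the step that produces the $h(d)$ in the error term.

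Everything else --- the trivial reduction for $(d,q)>1$, the unpacking of $\zz'$, the CRT to combine $\gamma_1 \bmod d$ with $c_0\bmod q_2$ using $(d,q_2)=1$, and the arithmetic identity $\sum_{\gamma_1}\rho(\gamma_1^2;d)=d^2 g(d)$ --- is correct, and the extra factor $1/(q_1 q_2)$ in the main term is accounted for properly.
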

Combining Lemmas $\ref{lem31}$ and $\ref{lem34}$, we get, as in \cite[Proposition 3.5, p.362]{FI1},
\begin{prop}\label{prop35}
Let 
$$a_0\bmod q_1\ \ \ \ \ \ \ \ c_0 \bmod q_2$$ 
be an admissible pair of congruence classes, let $a'_n$ be defined as in \eqref{anprime}, and let $r'_d(x)$ be defined as in \eqref{defrdx}. Then for every $\varepsilon >0$ and every $D\geq 1$, there exists an $x_0 = x_0(\varepsilon)>0$ and $C = C(\varepsilon)>0$ such that for every $x\geq x_0$, we have 
$$\sum_{\substack{d\text{ cubefree} \\ d\leq D}}|r'_d(t)|\leq C D^{\frac{1}{4}}x^{\frac{9}{16}+\varepsilon}$$
uniformly for $t\leq x$.
\end{prop}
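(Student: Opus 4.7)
The plan is to decompose the error term by the triangle inequality as
\[
r'_d(x) = \bigl(A'_d(x) - M'_d(x)\bigr) + \bigl(M'_d(x) - g'(d)A'(x)\bigr),
\]
so that summing $|r'_d(t)|$ over cubefree $d \leq D$ reduces to bounding each piece separately. The first piece is handled directly by Lemma \ref{lem31} applied at $t \leq x$, which yields $\sum_{d \leq D}|A'_d(t) - M'_d(t)| \ll D^{1/4} x^{9/16+\varepsilon}$ with an implied constant depending only on $\varepsilon$.

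For the second piece I first extract an asymptotic for $A'(x)$ itself. Applying Lemma \ref{lem34} at $d=1$ (where $g'(1) = h'(1) = 1$) gives $M'_1(x) = \tfrac{4\kappa x^{3/4}}{q_1 q_2} + O(x^{1/2})$, and combining with the $d = 1$ case of Lemma \ref{lem31} yields
\[
A'(t) = \frac{4\kappa t^{3/4}}{q_1 q_2} + O\bigl(t^{9/16+\varepsilon}\bigr)
\]
uniformly for $t \leq x$. Multiplying by $g'(d)$ and subtracting from the formula in Lemma \ref{lem34}, the leading terms cancel and we are left with
\[
M'_d(t) - g'(d)A'(t) \ll h'(d)\,x^{1/2} + g'(d)\,x^{9/16+\varepsilon}.
\]

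The final step is to sum these bounds over cubefree $d \leq D$. Since both $g(p)$ and $h(p)$ are $O(1/p)$ on primes (and similarly controlled on prime squares), the corresponding Dirichlet series have finite order near $s=1$, so $\sum_{d \leq D} g'(d) \ll (\log D)^{O(1)}$ and $\sum_{d \leq D} h'(d) \ll (\log D)^{O(1)}$. Hence the contribution of this second piece is $\ll x^{1/2}(\log x)^{O(1)} + x^{9/16+\varepsilon}(\log x)^{O(1)}$, which is absorbed into $D^{1/4} x^{9/16+\varepsilon}$ for all $D \geq 1$ and $x$ sufficiently large (upon enlarging $\varepsilon$ slightly). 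Combining with the Lemma \ref{lem31} bound and choosing $x_0$ and $C$ accordingly establishes the proposition.

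The proof is essentially bookkeeping once Lemmas \ref{lem31} and \ref{lem34} are in hand; the only place where the congruence conditions enter is implicitly through admissibility, which ensures $a'_n$ (and hence $A'_d$, $M'_d$, and $g'(d)$) vanish unless $(d, q) = 1$, so the restriction to $d$ coprime to $q$ is built into both sides of the identity and no additional sieving on $q$ is required. The only mild point to verify is that the constants implied in Lemmas \ref{lem31} and \ref{lem34}, together with the $(\log x)^{O(1)}$ factors from summing $g'$ and $h'$, can all be collected into a single constant $C = C(\varepsilon)$ independent of $D$ and $t$, which is straightforward.
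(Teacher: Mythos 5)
Your proposal is correct and follows the same approach the paper sketches: the paper's proof is a one-line pointer ("Combining Lemmas~\ref{lem31} and~\ref{lem34}, we get, as in [FI1, Proposition 3.5]..."), and your decomposition of $r'_d(t)$ into $(A'_d(t) - M'_d(t)) + (M'_d(t) - g'(d)A'(t))$, bounding the first sum by Lemma~\ref{lem31} and the second via Lemma~\ref{lem34} applied at general cubefree $d$ and at $d=1$, is exactly the intended bookkeeping. The observations that $\sum_{d\le D} g'(d)$ and $\sum_{d\le D} h'(d)$ are polylogarithmic (both $g$ and $h$ are supported on cubefree integers with $g(p), g(p^2), h(p), h(p^2)$ all $O(1/p)$), and that for $D\ge 1$ the resulting $x^{9/16+\varepsilon}(\log x)^{O(1)}$ term is absorbed into $D^{1/4}x^{9/16+\varepsilon}$ after shrinking the working $\varepsilon$, are precisely what is needed to obtain $C=C(\varepsilon)$ independent of $D$ and $t$.
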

Choosing $D = x^{\frac{3}{4}-8\varepsilon}$, we obtain hypothesis (R). 
\\\\
It remains to prove Lemma $\ref{lem31}$. We may assume that the sum is over $d\leq D$ with $(d, q) = 1$. For such $d$, we first approximate the sum $A'_d(x)$ by a smoothed sum
$$A'_d(f) = \sum_{n\equiv 0\bmod d}a'_nf(n),$$
where $f$ is a smooth function satisfying: 
\begin{itemize}
	\item $f$ is supported on $[0, x]$,
	\item $f(u) = 1$ for $0< u \leq x-y$, 
	\item $f^{(j)}(u)\ll y^{-j}$ for $x-y<u<x$, 
\end{itemize}
where $y = D^{\frac{1}{4}}x^{\frac{13}{16}}$ and the implied constants depend only on $j$ (see \cite[p.958]{FI1}). Since $a'_n$ is supported on integers of the form $a^2+c^4$, we trivially have
$$\sum_{\substack{d\leq D\\ (d, q) = 1}}|A'_d(x)-A'_d(f)|\ll yx^{-\frac{1}{4}+\varepsilon},$$
where the implied constant depends only on $\varepsilon$. With the above choice of $y$, it remains to prove Lemma \ref{lem31} with $A'_d(x)$ replaced by $A'_d(f)$. Similarly as on \cite[p.958]{FI1}, we write
\begin{equation}\label{Adf}
A'_d(f) = \sum_{b}\zz'(b)\sum_{\substack{\alpha\bmod d \\ \alpha^2+b^2\equiv 0\bmod d}}\sum_{\substack{a\equiv \alpha\bmod d \\ a\equiv a_0\bmod q_1}}f(a^2+b^2).
\end{equation}
Since $(d, q) = 1$, so also $(d, q_1) = 1$, and the two conditions $a\equiv \alpha\bmod d$ and $a\equiv a_0\bmod q_1$ can be combined into one condition $a\equiv \alpha' \bmod dq_1$. In fact, fixing an integer $\overline{d}$ that is an inverse of $d$ modulo $q_1$ and an integer $\bar{q}_1$ that is an inverse of $q_1$ modulo $d$, we can define $\alpha'$ as
$$\alpha'=\alpha q_1\bar{q}_1 + a_0 d \bar{d}.$$
We apply Poisson's summation formula to the sum over $a$ to obtain
$$\sum_{a\equiv \alpha'\bmod dq_1}f(a^2+b^2) = \frac{1}{dq_1}\sum_{k}e\left(\frac{\alpha'k}{dq_1}\right)\int_{-\infty}^{\infty}f(t^2+b^2)e\left(\frac{-tk}{dq_1}\right)dt.$$
Here and henceforth, we use the standard notation
$$e(t):= e^{2\pi i t}.$$
Substituting this into \eqref{Adf} we get
$$A'_d(f) = \frac{2}{dq_1}\sum_{b}\zz'(b)\sum_{k}\rho'(k, b; d)I(k, b; dq_1)dt,$$
where 
$$\rho'(k, b; d) = \sum_{\substack{\alpha\bmod d \\ \alpha^2+b^2\equiv 0\bmod d}}e\left(\frac{\alpha'k}{dq_1}\right),$$
and where
$$I(k, b; dq_1) = \int_0^{\infty}f(t^2+b^2)\cos(2\pi t k/dq_1)dt$$
is defined exactly the same as on \cite[p.959]{FI1}. We define $M'_d(f)$ to be the main term in this expansion, i.e. the term corresponding to $k = 0$,
$$M'_d(f) = \frac{2}{dq_1}\sum_{b}\zz'(b)\rho(b; d)I(0, b; dq_1).$$
Since $I(0, b; dq_1) = I(0, b, q_1)$, the argument on page 959 shows that
$$\sum_{\substack{d\leq D \\ (d, q) = 1}}|M'_d(f) - M'_d(x)| \ll yx^{-\frac{1}{4}}(\log x)^2\ll D^{\frac{1}{4}}x^{\frac{9}{16}+\varepsilon},$$
where the implied constants depend only on $\varepsilon$. It remains to prove Lemma \ref{lem31} with $A'_d(f)$ in place of $A'_d(x)$ and $M'_d(f)$ in place of $M'_d(x)$, i.e. to show that $M'_d(f)$ is indeed (on average) the main term in the above Fourier expansion of $A'_d(f)$.
\\\\
Following the argument on \cite[p.959-960]{FI1}, we see that it suffices to show an analogue of \cite[Lemma 3.3, p.957]{FI1} for $\rho'(k, l; d)$. 
\begin{lemma}\label{lem33}
For any $D$, $K$, and $L\geq 1$, for any complex numbers $\xi(k, l)$, and for any $\varepsilon>0$, we have the inequality
$$\sum_{d\leq D}\left|\sumsum_{\substack{0<k\leq K \\ 0<l\leq L}}\xi(k, l)\rho'(k, l; d)\right|\ll (D+\sqrt{DKL})(DKL)^{\varepsilon}\|\xi\|$$
where 
$$\|\xi\|^2 = \sumsum_{\substack{0<k\leq K \\ 0<l\leq L}}|\xi(k, l)|^2,$$
and the implied constant depends only on $\varepsilon$.
\end{lemma}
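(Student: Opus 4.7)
The plan is to reduce the bound for $\rho'(k,l;d)$ to the corresponding bound for $\rho(k,l;d)$ established in \cite[Lemma 3.3]{FI1}, by peeling off the $q_1$--dependence as a unimodular phase that depends only on the residue of $d$ modulo $q_1$.

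First, I would isolate the $q_1$--contribution inside the exponential. Since $(d,q_1)=1$ on the support of $\rho'$, the Chinese remainder definition $\alpha'=\alpha q_1\bar q_1+a_0 d\bar d$ gives the factorisation
\begin{equation*}
e\!\left(\frac{\alpha' k}{dq_1}\right)
= e\!\left(\frac{\alpha \bar q_1 k}{d}\right)\cdot e\!\left(\frac{a_0\bar d\, k}{q_1}\right),
\end{equation*}
and therefore
\begin{equation*}
\rho'(k,l;d) \;=\; e\!\left(\frac{a_0 \bar d\, k}{q_1}\right)\,\rho(\bar q_1 k,\,l;\,d),
\end{equation*}
where $\rho(k,l;d)$ denotes the Gauss-type sum studied in \cite{FI1}. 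The factor $e(a_0\bar d k/q_1)$ is unimodular and, crucially, depends on $d$ only through $d\bmod q_1$.

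Next, I would split the outer sum over $d\le D$ into $\varphi(q_1)$ sub-sums according to the residue class $d_0:=d\bmod q_1\in(\ZZ/q_1)^*$. On each such class the phase $e(a_0\bar d_0 k/q_1)$ is a function of $k$ alone, so it can be absorbed into the coefficient $\xi$ to give
\begin{equation*}
\xi_{d_0}(k,l):=\xi(k,l)\,e\!\left(\tfrac{a_0\bar d_0 k}{q_1}\right),\qquad \|\xi_{d_0}\|=\|\xi\|.
\end{equation*}
Thus it is enough to estimate, for each fixed $d_0$, the quantity
\begin{equation*}
\sum_{\substack{d\le D\\ d\equiv d_0\bmod q_1}}\left|\sumsum_{\substack{0<k\le K\\ 0<l\le L}}\xi_{d_0}(k,l)\,\rho(\bar q_1 k,\,l;\,d)\right|,
\end{equation*}
and then sum the resulting $O_{q_1}(1)$ contributions.

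Finally, I would show that the presence of the $d$--dependent twist $\bar q_1$ on the $k$--variable does not affect the bound of \cite[Lemma 3.3]{FI1}. The proof of that lemma proceeds via Cauchy--Schwarz followed by a ``large sieve for Gauss sums'' input: one expands
\begin{equation*}
\left|\sumsum_{k,l}\xi(k,l)\rho(k,l;d)\right|^{2}
\end{equation*}
and estimates $\sum_{d\le D}\rho(k_1,l_1;d)\overline{\rho(k_2,l_2;d)}$ using the multiplicative/Gaussian-integer structure of $\rho$. Replacing $k$ by $\bar q_1 k$ is a unit rescaling modulo each $d$, so the multiplicativity of $\rho$ in $d$, its evaluation in terms of divisors of $\alpha^2+l^2$ in $\ZZ[i]$, and the ensuing Weil/Kloosterman bounds are all preserved; only the factor depending on $k_1-k_2$ in the off-diagonal term is replaced by a factor depending on $\bar q_1(k_1-k_2)$, which is equally admissible since $(\bar q_1, d)=1$. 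This yields the same bound $(D+\sqrt{DKL})(DKL)^{\varepsilon}\|\xi_{d_0}\|$ on each residue class, and summing over the $\varphi(q_1)=O_{q_1}(1)$ classes proves Lemma \ref{lem33}. The main obstacle I anticipate is bookkeeping: verifying that every place in the FI1 argument where the structure of $\rho(k,l;d)$ is unpacked (the reduction to Gaussian integers, the treatment of the diagonal $k_1=k_2$, $l_1=l_2$, and the off-diagonal Kloosterman-type estimate) remains valid under the unit twist $k\mapsto\bar q_1 k$, but since $\bar q_1$ is invertible modulo $d$ this amounts to a straightforward renaming rather than a new estimate.
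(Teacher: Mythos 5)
Your opening identity
\[
\rho'(k,l;d)\;=\;e\!\left(\frac{a_0\bar d\,k}{q_1}\right)\rho(\bar q_1 k,l;d),\qquad \bar q_1=\bar q_1(d)\ \text{an inverse of }q_1\ \text{mod }d,
\]
is correct, and splitting the $d$--sum into residue classes $d_0\bmod q_1$ correctly freezes the phase $e(a_0\bar d_0 k/q_1)$. But the residual twist $\bar q_1(d)$ is \emph{not} frozen by this split: it is the inverse of $q_1$ modulo $d$ and is therefore a different integer for each $d$, even within a fixed class $d\equiv d_0\bmod q_1$. So what remains is not FI1's sum with $k$ renamed; it is a sum in which the $k$--argument of $\rho$ is twisted by a unit that changes with the modulus.

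This is where the gap lies. You describe the remaining step as ``bookkeeping'' and ``a straightforward renaming,'' but the crux of FI1's Lemma 3.3 is the large-sieve inequality \eqref{lsi0}, whose proof (going back to Fouvry--Iwaniec) rests on the fact that the fractions $\nu/d$ with $\nu^2+1\equiv 0\bmod d$ and $d$ in a dyadic range are well-spaced modulo $1$ with gap $\asymp 1/D$ rather than $1/D^2$. After your twist, the relevant fractions become $\bar q_1(d)\nu/d$, and it is not at all obvious that well-spacing survives; the paper itself flags exactly this point (``it is not clear that $\nu'/dq_1$ are also well-spaced modulo $1$\ldots''). Nothing in your proposal addresses this, and ``$\bar q_1$ is a unit mod $d$'' does not by itself imply that the resulting family of fractions keeps the spacing property as $d$ varies. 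The paper avoids the issue entirely by a different decomposition: it isolates the large-sieve input as Lemma \ref{lsi} and proves it by splitting the \emph{inner} summation variable $n$ into residue classes modulo $q_1$. Writing $n=mq_1+n_0$ causes the extra $q_1$ in the denominator of $e(\nu'n/dq_1)$ to cancel, producing $e(\nu'm/d)=e(\nu m/d)$ because $\nu'\equiv\nu\bmod d$, so the twist disappears completely and \eqref{lsi0} applies verbatim for each of the $O_{q_1}(1)$ classes $n_0$. If you want to salvage your route, you would need either to prove a well-spacing statement for the twisted fractions $\bar q_1(d)\nu/d$ (not done, and not clear), or to replace the split over $d$ with a split over the inner variable as the paper does.
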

Recall the following inequality from \cite[(3.6), p.957]{FI1}: for any complex numbers $\alpha_n$ and any $D, N\geq 1$, we have
\begin{equation}\label{lsi0}
\sum_{d\leq D}\sum_{\substack{\nu \bmod d\\ \nu^2+1\equiv 0 \bmod d}}\left|\sum_{n\leq N}\alpha_n e\left(\frac{\nu n}{d}\right)\right|\ll D^{\frac{1}{2}}(D+N)^{\frac{1}{2}}\|\alpha\|,
\end{equation}
where
$$\|\alpha\|:=\left(\sum_{n}|\alpha_n|^2\right)^{\frac{1}{2}},$$
and the implied constant is absolute. Lemma \ref{lem33} can be proved in the same way as \cite[Lemma 3.3, p.957]{FI1} given the following analogue of inequality \eqref{lsi0}. 
\begin{lemma}\label{lsi}
Let $D, N\geq 1$ and let $\alpha_n$ be any complex numbers. For integers $d$ such that $(d, q_1) = 1$, let $\nu'$ be an integer in the unique residue class modulo $dq_1$ that reduces to $\nu$ modulo $d$ and $a_0$ modulo $q_1$. Then there exists an absolute constant $C = C(q_1)$ such that for all $D$ and $N$ sufficiently large, we have 
\begin{equation}\label{eqlsi}
\sum_{\substack{d\leq D\\ (d, q_1) = 1}}\sum_{\substack{\nu \bmod d\\ \nu^2+1\equiv 0 \bmod d}}\left|\sum_{n\leq N}\alpha_n e\left(\frac{\nu' n}{dq_1}\right)\right|\leq C D^{\frac{1}{2}}(D+N)^{\frac{1}{2}}\|\alpha\|.
\end{equation} 
\end{lemma}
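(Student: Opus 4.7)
The plan is to reduce Lemma \ref{lsi} to inequality \eqref{lsi0} by peeling off the contribution from modulus $q_1$ and absorbing it into the coefficients. Starting from the explicit formula $\nu' = \nu q_1 \overline{q}_1 + a_0 d \overline{d}$ recalled above \eqref{Adf} (with $q_1 \overline{q}_1 \equiv 1 \bmod d$ and $d \overline{d} \equiv 1 \bmod q_1$), division by $dq_1$ yields
$$\frac{\nu'}{dq_1} \equiv \frac{\nu \overline{q}_1}{d} + \frac{a_0 \overline{d}}{q_1} \pmod{1},$$
so that the phase factors as
$$e\!\left(\frac{\nu' n}{dq_1}\right) = e\!\left(\frac{\nu \overline{q}_1 n}{d}\right) \cdot e\!\left(\frac{a_0 \overline{d} n}{q_1}\right).$$

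The second factor depends on $d$ only through its residue class modulo $q_1$, and not on $\nu$ at all. The natural next step is therefore to partition the outer sum over $d$ into at most $\varphi(q_1)$ subsums, one for each invertible residue class $r$ modulo $q_1$. On each subsum, $e(a_0 \overline{r} n/q_1)$ is a unimodular function of $n$ alone, so it can be absorbed into $\alpha_n$ to produce new coefficients $\alpha_n^{(r)}$ with $\|\alpha^{(r)}\| = \|\alpha\|$. It then suffices, for each $r$, to bound
$$T_r := \sum_{\substack{d \leq D \\ d \equiv r \bmod q_1}} \sum_{\substack{\nu \bmod d \\ \nu^2+1 \equiv 0 \bmod d}}\left|\sum_{n \leq N} \alpha_n^{(r)} \, e\!\left(\frac{\nu \overline{q}_1 n}{d}\right)\right|$$
by a constant (depending on $q_1$) times $D^{1/2}(D+N)^{1/2}\|\alpha\|$, and then to sum over the finitely many values of $r$.

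Within $T_r$, I would make the substitution $\mu \equiv \nu \overline{q}_1 \bmod d$. Since $(q_1, d) = 1$, this is a bijection on $(\ZZ/d\ZZ)^{\times}$, and under it the condition $\nu^2 + 1 \equiv 0 \bmod d$ transforms into $(q_1 \mu)^2 + 1 \equiv 0 \bmod d$, while the inner phase becomes $e(\mu n/d)$. The target bound thus becomes
$$\sum_{\substack{d \leq D \\ (d, q_1) = 1}} \sum_{\substack{\mu \bmod d \\ (q_1 \mu)^2 + 1 \equiv 0 \bmod d}} \left|\sum_{n \leq N} \alpha_n \, e\!\left(\frac{\mu n}{d}\right)\right| \;\ll_{q_1}\; D^{1/2}(D+N)^{1/2}\|\alpha\|.$$

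This is a mild variant of \eqref{lsi0}. Indeed, the shifted condition $(q_1 \mu)^2 + 1 \equiv 0 \bmod d$ says exactly that $d$ divides $(q_1 \mu + i)(q_1 \mu - i)$ in $\ZZ[i]$, so the pairs $(d, \mu)$ are parametrized by Gaussian-integer factorizations of $d$ in the same way as in the classical case $\mu^2 + 1 \equiv 0 \bmod d$; only the translation between the congruence root and the Gaussian prime is shifted by the fixed unit $q_1$. Consequently, the spacing and counting arguments underpinning the proof of \eqref{lsi0} should carry over with only cosmetic changes, yielding the required bound with an implied constant depending only on $q_1$. I expect this last verification to be the main obstacle: routine in spirit, but somewhat tedious given the depth of the proof of \eqref{lsi0}.
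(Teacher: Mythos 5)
Your factorization of the phase and the reduction down to the sum
\[
\sum_{\substack{d \leq D \\ (d, q_1) = 1}} \sum_{\substack{\mu \bmod d \\ (q_1 \mu)^2 + 1 \equiv 0 \bmod d}} \left|\sum_{n \leq N} \alpha_n \, e\!\left(\frac{\mu n}{d}\right)\right|
\]
are all correct, but the final step---asserting that this is a ``mild variant'' of \eqref{lsi0} whose proof carries over with ``only cosmetic changes''---is precisely the gap. The inequality \eqref{lsi0} is not a black-box large sieve over arbitrary fractions; it rests on the Fouvry--Iwaniec spacing lemma, which exploits the specific arithmetic of the roots of $\nu^2 + 1 \equiv 0 \bmod d$. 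After your substitution, the fractions $\mu/d$ are related to the classical ones by $\mu/d \equiv \overline{q_1}\nu/d \pmod 1$ where $\overline{q_1}$ is the inverse of $q_1$ \emph{modulo $d$}, a $d$-dependent dilation that scrambles the geometry behind the spacing argument. This is exactly the obstacle the paper flags in the discussion preceding the lemma (``it is not clear that $\nu'/dq_1$ are also well-spaced''), and it is the whole reason a reduction to \eqref{lsi0}, rather than a re-derivation of the spacing lemma, is needed.

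The paper's route sidesteps this entirely: it splits the summation variable $n$ (not $d$) into residue classes $n_0 \bmod q_1$. Writing $n = mq_1 + n_0$, the phase factors as $e(\nu' m/d)\,e(\nu' n_0/dq_1)$; the second factor is unimodular and independent of $m$, so it drops after the triangle inequality, and the first factor equals $e(\nu m/d)$ since $\nu' \equiv \nu \bmod d$. Applying \eqref{lsi0} for each of the $q_1$ classes gives the result with no modification to the spacing argument at all. If you wanted to salvage your version, the cleanest finish would actually be to perform this same split of $n$ modulo $q_1$ on your residual sum, which would transform $e(\mu n/d)$ into $e(q_1\mu m/d) = e(\nu m/d)$ and land you back on \eqref{lsi0}---but at that point your initial split of $d$ and the substitution $\mu = \nu\overline{q_1}$ have done no work and can be deleted.
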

Inequality \eqref{lsi0} is a consequence of a large sieve inequality applied to the rationals $\nu/d \bmod 1$ with $\nu$ ranging over the roots of $\nu^2+1\equiv 0\bmod d$ for $d$ in a range around $D$. The large sieve inequality can be applied because these rationals $\nu/d$ are well-spaced modulo 1 for $d$ in a certain range around $D$ (i.e. pairwise differences are uniformly bounded from below by about $1/D$ instead of $1/D^2$). This is a key ingredient in the work of \cite{FoIw}. In our analogue, however, it is not clear that $\nu'/dq_1$ are also well-spaced modulo 1 for $d$ in a similar range around $D$. Nonetheless, we can reduce Lemma \ref{lsi} to inequality \eqref{lsi0} as follows.
\\\\
We first split the sum over $n$ into congruence classes modulo $q_1$ to get
$$\sum_{n_0 \bmod q_1}\sum_{\substack{n\leq N\\ n\equiv n_0 \bmod q_1}}\alpha_n e\left(\frac{\nu' n}{dq_1}\right) = \sum_{n_0 \bmod q_1}\sum_{m\leq (N-n_0)/q_1}\alpha_{m, n_0} e\left(\frac{\nu' m}{d}\right)e\left(\frac{\nu' n_0}{dq_1}\right),$$
where
$$\alpha_{m, n_0} = \alpha_{mq_1+n_0}.$$
Since $e\left(\nu' n_0 / dq_1\right)$ does not depends on $m$, the sum on the left-hand-side of \eqref{eqlsi} is
$$\leq \sum_{n_0 \bmod q_1}\sum_{\substack{d\leq D\\ (d, q_1) = 1}}\sum_{\substack{\nu \bmod d\\ \nu^2+1\equiv 0 \bmod d}}\left|\sum_{m\leq (N-n_0)/q_1}\alpha_{m, n_0} e\left(\frac{\nu' m}{d}\right)\right|.$$
Now $e\left(\frac{\nu' m}{d}\right) = e\left(\frac{\nu m}{d}\right)$ and 
$$\sum_{m}|\alpha_{m, n_0}|^2 \leq \sum_{n}|\alpha_n|^2,$$
so that by \eqref{lsi0} we get
$$\sum_{\substack{d\leq D\\ (d, q_1) = 1}}\sum_{\substack{\nu \bmod d\\ \nu^2+1\equiv 0 \bmod d}}\left|\sum_{n\leq N}\alpha_n e\left(\frac{\nu' n}{dq_1}\right)\right|\ll q_1 D^{1/2}(D+N/q_1)^{1/2}\|\alpha\|.$$
This finishes the proof of \eqref{lsi} and thus also the proof of condition (R).

\section{Proof of condition (B)}     
Many of the upper bound estimates carried out in sections 4 and 5 of \cite{FI1} require no changes since $0\leq a'_n\leq a_n$ (compare \eqref{anprime} and \eqref{an}). In most cases, we now sum over fewer non-negative terms.  
\\\\
Recall that we established condition (R) with $D = x^{\frac{3}{4}-8\varepsilon}$. All of the refinements from \cite[Section 4, p.962-966]{FI1} remain valid for our modified sequence $(a'_n)$. We briefly recall these refinements. First note that it is enough to prove the analogue of \cite[Proposition 4.1, p.963]{FI1}:
\begin{prop}\label{prop41}
Let $c_0\in\{0, 2\}$, let $q_2 = 4$, and let
$$a_0\bmod q_1\ \ \ \ \ \ \ \ c_0 \bmod q_2$$ 
be an admissible pair of congruence classes. Define $\beta(n, C)$ as in \eqref{betanc}, $\Pi$ as in \eqref{pi}, and $a'_n$ as in \eqref{anprime}. Let $x\geq 3$, $\eta>0$, and $A>0$. Let $P$ be in the range
\begin{equation}\label{Prange}
(\log\log x)^2\leq \log P\leq (\log x)(\log\log x)^{-2}.
\end{equation}
Let 
\begin{equation}\label{bigB}
B = 4A+2^{20}.
\end{equation}
Then there exists $x_0 = x_0(\eta, A)$ such that for all $x\geq x_0$, for all $N$ with 
\begin{equation}\label{condN}
x^{\frac{1}{4}+\eta}<N<x^{\frac{1}{2}}(\log x)^{-B},
\end{equation}
and for all $C$ with
\begin{equation}\label{eqC}
1\leq C\leq N^{1-\eta},
\end{equation}
we have
\begin{equation}\label{eqprop41}
\sum_{m}\left|\sum_{\substack{N\leq n\leq 2N \\ mn\leq x \\ (n, m\Pi) = 1}}\beta(n, C)a_{mn}\right|\leq A'(x)(\log x)^{5-A}.
\end{equation}
\end{prop}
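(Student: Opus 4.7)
The plan is to follow the proof of \cite[Proposition 4.1]{FI1} step by step, inserting the two congruence conditions via characters of small modulus. Admissibility of $(a_0\bmod q_1,c_0\bmod 4)$ with $c_0\in\{0,2\}$ implies $a_0^2+c_0^4\equiv a_0^2\bmod 16$ is a unit modulo $q_1$, so every $a$ counted by $a'_n$ is coprime to $q_1$, and $\mathbf{1}[a\equiv a_0\bmod q_1]$ can be expanded by orthogonality of Dirichlet characters modulo $q_1$. For the condition $c\equiv c_0\bmod 4$ with $c$ even, write $c=2c_1$; then
$$\mathbf{1}[c\equiv c_0\bmod 4]=\tfrac{1}{2}\bigl(1+(-1)^{c_1-c_0/2}\bigr).$$
Since $c_0\in\{0,2\}$, a direct check shows that the sign $\psi(c):=(-1)^{c/2}$ is invariant under $c\mapsto -c$ and factors through $b=c^2 \bmod 16$ (taking value $+1$ on $b\equiv 0\bmod 16$ and $-1$ on $b\equiv 4\bmod 16$). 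By orthogonality, the sequence $a'_n$ is a bounded-length linear combination of twisted sequences
$$a_n^{\chi,\psi}:=\sumsum_{\substack{a,c\in\ZZ\\a^2+c^4=n}}\chi(a)\psi(c),$$
indexed by a Dirichlet character $\chi$ modulo $q_1$ and $\psi\in\{1,\,(-1)^{c/2}\}$.

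It therefore suffices to prove, for each admissible pair $(\chi,\psi)$, the twisted bilinear bound
$$\sum_m\Bigl|\sum_{\substack{N\le n\le 2N\\ mn\le x\\ (n,m\Pi)=1}}\beta(n,C)\,a_{mn}^{\chi,\psi}\Bigr|\ll A'(x)(\log x)^{5-A}.$$
For this we mimic \cite[Sections 5--6]{FI1}: (i) combinatorial unwinding of $\beta(n,C)$; (ii) repeated Cauchy--Schwarz transferring the sum to quadruples of Gaussian integers via the factorization $a^2+c^4=(a+ic^2)(a-ic^2)$ in $\ZZ[i]$; (iii) reduction to incomplete Kloosterman-type sums over $\ZZ[i]$, bounded by the Gaussian-integer large sieve and Weil estimates. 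At every step the twist $\chi(a)\psi(c)$ is carried through the argument. Because both $\chi(a)$ and $\psi(c)$ depend only on the residue of $z=a+ic^2$ modulo a divisor of $\operatorname{lcm}(q_1,16)$, their insertion amounts to prescribing $z$ in a fixed residue class modulo a constant modulus, which the underlying arithmetic estimates absorb without loss beyond a constant depending on $q_1$ and $q_2$.

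The principal obstacle --- and the reason for the restriction $q_2=4$ --- is to ensure the twist from $c$ is compatible with the Gaussian-integer factorization $n=z\bar z$, which identifies $(a,c)$ with $(a,-c)$. This forces the twist to be invariant under $c\mapsto -c$. For $c_0\in\{0,2\}$ modulo $4$ this invariance holds automatically, but for larger $q_2$ the classes $c\bmod q_2$ and $-c\bmod q_2$ generally differ, so the twist would distinguish the two conjugate representations and obstruct the reduction to sums in $\ZZ[i]$. Once $q_2=4$ is fixed, verifying that each step in \cite[Sections 5--6]{FI1} extends to the twisted sequence is essentially bookkeeping, but cumbersome owing to the length and technical density of that argument.

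Reassembling the twisted bounds via the orthogonality inversion yields the proposition: the principal pair $(\chi_0,1)$ contributes $c(q_1,q_2)$ times FI's original bound, matching $A'(x)(\log x)^{5-A}$, while the non-principal pairs contribute terms of comparable or smaller order. This completes the verification of hypothesis (B) with $D=x^{3/4-8\varepsilon}$ via the reduction in \cite[Section 4]{FI1}.
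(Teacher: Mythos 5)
Your plan --- expand the two congruence conditions into characters and carry the twist through the Friedlander--Iwaniec bilinear argument --- differs from what the paper actually does, and it contains a concrete misstep at the crux. In FI's bilinear structure the representation $a^2+b^2=mn$ with $b=c^2$ corresponds to the factorization $a+bi=\overline{w}z$ with $|w|^2=m$ and $|z|^2=n$; thus $a=\Imag(\overline{w}z)$ and $b=\Real(\overline{w}z)$ are functions of the \emph{pair} $(w,z)$, not of $z$ alone. When you write ``$z=a+ic^2$'' and conclude the twist $\chi(a)\psi(c)$ ``depends only on the residue of $z$ modulo a constant,'' you are conflating the product $\overline{w}z$ with FI's variable $z$. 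Restricting the residue of FI's $z$ does \emph{not} pin down $a\bmod q_1$, and restricting the residue of the product $\overline{w}z$ is a joint condition on both bilinear variables. The paper handles this by restricting the supports of both $\alpha_w$ (to a residue class modulo $q_1$) and $\beta_z$ (to a residue class modulo $64q_1$), after which $\Imag(\overline{w}z)\bmod q_1$ is determined and the condition is eliminated outright; that is a genuinely different move than inserting an oscillatory factor $\chi(\Imag\overline{w}z)$ into the summand, which would not separate cleanly in the Cauchy--Schwarz step.

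You also understate where the new work actually lies. Your observation that the condition on $c$ factors through $b=c^2\bmod 16$ when $c_0\in\{0,2\}$ is correct and does explain why $q_2=4$ is tractable, but the paper does not reduce even this to a residue restriction: it keeps the progression $c\equiv c_0\bmod 4$ inside $\zz'$, Poisson-sums over that shifted progression, and derives modified arithmetic sums $G'(h_1,h_2)$ and exact formulas for $G'_0(z_1,z_2)$ (Lemma~\ref{lem81} and \eqref{Gprime00}--\eqref{Gprime02}) carrying the extra factor $(q_2/\delta)^{-2}$ and shifted residues $\gamma_i'$; the resulting split into $T'(\beta,16)$ and $T'(\beta,64)$, together with the check that the Hecke $L$-function oscillation arguments of FI Sections 15--17 tolerate replacing the modulus $4d$ by $\xi d$ with $\xi\in\{16,64\}$, is the substance of the proof. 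Finally, describing the engine as ``incomplete Kloosterman-type sums, large sieve, and Weil estimates'' matches the proof of hypothesis (R), not (B); for (B) the cancellation comes from Jacobi-symbol oscillation and Hecke $L$-functions, and that is precisely where the constant-modulus shift has to be verified rather than merely asserted to be absorbed.
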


\subsection{From Propositions \ref{prop35} and \ref{prop41} to Proposition 1}
Before proving Proposition \ref{prop41}, we deduce Proposition 1 from Propositions \ref{prop35} and \ref{prop41}. Let $a_0\in\{1, 3, 5, 7, 9, 11, 13, 15\}$, $q_1 = 16$, $c_0\in\{0, 2\}$, and $q_2 = 4$. Then 
$$a_0\bmod q_1\ \ \ \ \ \ \ \ c_0 \bmod q_2$$
is an admissible pair of congruences. We apply the asymptotic sieve for primes described in Section 4.1 to the sequence $(a'_n)$ defined in \eqref{anprime}. Hypotheses (H1)-(H7) for $(a'_n)$ are verified in the same way as hypotheses (H1)-(H7) for the sequence $(a_n)$ defined in \eqref{an} (see comment at the end of Section 4.2).  
\\\\
Proposition \ref{prop35} implies that $(a'_n)$ satisfies hypothesis (R) for $\varepsilon = 1/8000$, 
\begin{equation}\label{bigD}
D = x^{\frac{3}{4}-\frac{1}{1000}},
\end{equation}
which is indeed in the range \eqref{Drange}, and $x_r = x_r(\varepsilon)$ large enough.
\\\\
Applying Proposition \ref{prop41} with the same $D$ as in \eqref{bigD}, with $P$ any number in the range \eqref{Drange}, with $A = 5+2^{26}$, and with $\eta =\frac{1}{100}$ establishes hypothesis (B) for the sequence $(a'_n)$ with $\delta = (\log x)^B$, $\Delta = x^{\eta}$, and $x_b = \max \{x_r, x_0(\eta, A)\}$.
\\\\
We then obtain the asymptotic formula \eqref{asymp} with
$$c(q_1, q_2) = \frac{1}{32},$$
which proves \eqref{prop1eq}.

\subsection{Proof of Proposition \ref{prop41}}
Suppose that we are in the setting of Proposition \ref{prop41}. Now take $A'= 2A+2^{20}$ (see \cite[p.1018]{FI1}) and define
$$\vartheta := (\log x)^{-A}$$
and
\begin{equation}\label{theta}
\theta := (\log x)^{-A'}
\end{equation}
as on \cite[p.965]{FI1}. We split the sum \eqref{eqprop41} by using a smooth partition of unity. Let $p$ be a smooth function supported on an interval 
$$N'<n\leq (1+\theta)N'$$
with $N<N'<2N$, and suppose that $p$ is twice differentiable with 
$$p^{(j)}\ll(\theta N)^{-j}$$
for $j = 0, 1, 2$ (see \cite[(4.14), p.965]{FI1}). It then suffices to show Proposition \ref{prop41} with $\beta(n, C)$ replaced by a smoothed version
\begin{equation}\label{betancsmooth}
\beta(n) = \beta(n, C) = p(n)\mu(n)\sum_{c|n,\ c\leq C}\mu(c)
\end{equation}
and the bound $\leq A'(x)(\log x)^{5-A}$ replaced by $\leq C\vartheta\theta A'(x)(\log x)^{5}$ (see \cite[(4.17), p.965]{FI1}). Moreover, one can split the sum over $m$ in \eqref{eqprop41} into dyadic segments $M\leq m\leq 2M$ with $M$ satisfying
\begin{equation}\label{condMN}
\vartheta x\leq MN\leq x.
\end{equation}
We remark that \eqref{condN} now implies that $N\leq \vartheta\theta(MN)^{\frac{1}{2}}$. Sums over the remaining dyadic segments are bounded trivially at an acceptable cost. Again, for an acceptable cost, one can suppose that $\beta(n, C)$ is supported on $n$ with
\begin{equation}\label{taun}
\tau(n)\leq \tau := (\log x)^{A+2^{20}}.
\end{equation}
(see \cite[p.963-966, 1018]{FI1}). For convenience of notation, we also restrict the support of $\beta(n, C)$ to $n$ satisfying
\begin{equation}\label{coprimePi}
(n, \Pi) = 1,
\end{equation}
where $\Pi$ is defined in \eqref{pi}. Finally, let $\alpha(m)$ be any complex numbers supported on $M<m\leq 2M$ with $|\alpha(m)|\leq 1$, and define
\begin{equation}\label{Bprime}
\mathcal{B}'^{\ast}(M, N) := \sumsum_{(m, n) = 1}\alpha(m)\beta(n)a'_{mn},
\end{equation}
where $\beta(n) = \beta(n, C)$ is defined as in \eqref{betancsmooth} (see \cite[(4.20), p.966]{FI1}). To establish condition (B) it then suffices to prove
\begin{lemma}\label{reduct1}
Let $\eta>0$ and $A>0$ and take $B$ as in \eqref{bigB}. Then there exists $x_0 = x_0(\eta, A)>0$ such that for all $x\geq x_0$, for all $M$ and $N$ satisfying \eqref{condN} and \eqref{condMN}, and for all $C$ satisfying \eqref{eqC} we have
\begin{equation}\label{eqreduct1}\tag{B'}
\left|\mathcal{B}'^{\ast}(M, N)\right|\leq \vartheta\theta(MN)^{\frac{3}{4}}(\log MN)^5.
\end{equation}
\end{lemma}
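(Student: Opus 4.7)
The plan is to follow the Cauchy--Schwarz and dispersion arguments of \cite[Sections 5--6]{FI1} essentially line by line, making only the modifications needed to carry along the extra congruence conditions encoded in the sequence $(a'_n)$. First I recast $\mathcal{B}'^{\ast}(M,N)$ as a sum over Gaussian integers by writing $a + c^2 i = wz'$ in $\ZZ[i]$ with $|w|^2 = m$ and $|z'|^2 = n$, up to units and complex conjugation. The congruence $a \equiv a_0 \pmod{q_1}$ is detected via additive characters modulo $q_1$ applied to $\Real(wz')$; for $q_2 = 4$ with $c_0 \in \{0,2\}$, the condition $c \equiv c_0 \pmod{4}$ together with $c$ even forces $c^2 = \Imag(wz')$ to lie in a single fixed residue class modulo $16$, and so is again detected by characters modulo $16$. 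Both conditions therefore combine into additive characters of a single fixed ideal $\mm^k \subset \ZZ[i]$, with $k$ absolutely bounded, evaluated at $wz'$.

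Next I apply Cauchy--Schwarz in $w$, following \cite[(5.1)--(5.10)]{FI1}, which opens a sum over pairs $(w_1, w_2)$ of Gaussian integers with $|w_i|^2 \in (M,2M]$. The inner sum becomes a count of pairs $(z'_1, z'_2)$ of Gaussian integers in a short annulus subject to divisibility relations against $w_1$ and $w_2$, weighted by the character twists above. Since the twist has a fixed conductor coprime to the dispersion moduli --- here the admissibility of the pair $(a_0 \bmod q_1,\, c_0 \bmod q_2)$ intervenes, guaranteeing $(mn, q) = 1$ and so decoupling the twist modulus from the sieve moduli --- the twist merely splits the inner sum into $O(1)$ residue classes modulo $\mm^k$, and each class is handled by the same Gaussian-integer large-sieve and Weil-type exponential sum bounds employed in \cite[Section 6]{FI1}. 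The square-root constraint $\Imag(z') \in \zz'$ is structurally identical to the one treated in \cite{FI1}: the residue $c \bmod 4$ is fixed at the outset, so $c$ is restricted to two arithmetic progressions without altering any subsequent estimate.

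The main obstacle will be to verify that the character twist does not destroy any cancellation estimate in \cite[Section 6]{FI1}. Specifically, one must check that the short-interval large sieve, the exponential-sum bounds arising from Poisson summation on the Gaussian-integer side, and the Jacobi-symbol manipulations used to detect coprimality all remain valid when the summand carries a fixed additive character of conductor dividing $\mm^k$, and that diagonal terms where the twist trivializes contribute only $O((MN)^{3/4}(\log MN)^{O(1)})$. These verifications are routine but bookkeeping-heavy; any loss incurred is a constant depending only on $q_1$ and $q_2$, which, by the convention stated in the remark after Proposition \ref{prop35}, is absorbed into the implied constants. Summing over the $O(1)$ residue classes and over the bounded number of twisting characters yields \eqref{eqreduct1}, which establishes Lemma \ref{reduct1} and hence, via Proposition \ref{prop41}, hypothesis (B) for $(a'_n)$.
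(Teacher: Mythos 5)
Your proposal diverges from the paper's argument in two respects, one of which is a genuine gap.

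First, a structural error in the Cauchy--Schwarz step. The paper applies Cauchy--Schwarz to eliminate the coefficients $\alpha_w$ (which are only assumed bounded and contribute no cancellation), producing
$$\mathcal{B}'(M,N)\ll M^{1/2}\Bigl(\sum_w f(w)\Bigl|\sum_z\beta_z\zz'(\Real\overline{w}z)\Bigr|^2\Bigr)^{1/2},$$
and then \emph{expands the square in $z$} to arrive at a sum over coprime pairs $(z_1,z_2)$ with a \emph{free} $w$-variable. You describe Cauchy--Schwarz ``in $w$'' as opening a sum over pairs $(w_1,w_2)$, and then also mention pairs $(z_1',z_2')$; this is not what happens and would give the wrong shape of dispersion sum. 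The dispersion method here works precisely because the free $w$-sum runs over a disc of radius $\sim\sqrt{M}$ and is amenable to Poisson summation, while the arithmetic (the modulus $\Delta=\Imag\oz_1 z_2$ and the sums $G',F$) lives in the $(z_1,z_2)$-variable. Getting the roles of $w$ and $z$ backwards is not cosmetic.

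Second, and more importantly, you propose to detect $a\equiv a_0\bmod q_1$ (i.e.\ $\Imag\overline{w}z\equiv a_0\bmod q_1$) by expanding the indicator into additive characters modulo $q_1$, ``evaluated at $wz'$,'' and you assert that the resulting twist ``merely splits the inner sum into $O(1)$ residue classes'' and that the verifications are ``routine.'' This is not so. After Cauchy--Schwarz, such a twist becomes a bilinear phase of the form $e\bigl(j\Imag(\overline{w}(z_1-z_2))/q_1\bigr)$ (or a two-parameter version $e\bigl((j_1\Imag\overline{w}z_1-j_2\Imag\overline{w}z_2)/q_1\bigr)$) living \emph{inside} the free $w$-sum and depending jointly on $w$ and $(z_1,z_2)$. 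Carrying this through the Poisson summation shifts all the dual frequencies $(h_1,h_2)$, so the Fourier integral $F$ and the complete exponential sum $G'$ must be re-derived with shifted arguments; it is not at all clear that the main term still comes from $h_1=h_2=0$, nor that the Weil-type bounds and the computation of $G'_0$ survive unchanged. You have not verified any of this, and calling it ``routine but bookkeeping-heavy'' masks the actual difficulty. The paper avoids this issue entirely by a cleaner device: it restricts $\alpha_w$ to $w$ in a \emph{fixed} residue class modulo $q_1$ and $\beta_z$ to $z$ in a \emph{fixed} primary class modulo $64q_1$, so that $\Imag\overline{w}z\bmod q_1$ is \emph{constant} on the support, and the condition $\Imag\overline{w}z\equiv a_0\bmod q_1$ is either identically satisfied or identically false; one simply sums over the $O(q_1^4)$ pairs of classes for which it holds. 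No character twist ever appears. Moreover, the modulus $64q_1$ for $z$ (not just $q_1$) is chosen deliberately: it forces $\Delta=\Imag\oz_1 z_2\equiv 0\bmod 64$, which later guarantees $(q_2,|\Delta|)=4$ and makes the computation of $G'(h_1,h_2)$ and $G'_0(z_1,z_2)$ tractable in the special case $q_2=4$, $c_0\in\{0,2\}$. Your proposal does not address this at all, and without it the subsequent treatment of the congruence on $c$ (which the paper keeps inside $\zz'$ and surfaces only at the Poisson step via the constraint $\gamma_1\equiv\gamma_2\equiv c_0\bmod\delta$) would not reduce to a perturbation of the Friedlander--Iwaniec sums.

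In short: the paper's proof of Lemma~\ref{reduct1} is a reduction to Lemma~\ref{reduct2} obtained by fixing residue classes of $w$ and $z$ \emph{before} Cauchy--Schwarz, not by inserting characters. Your character-based approach might be salvageable, but as written it rests on an incorrect account of the dispersion step and leaves the genuinely new estimates (the analogues of Lemmas~8.1 and~8.4 of Friedlander--Iwaniec for the twisted $G'$) entirely unaddressed.
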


\subsection{Proof of Lemma \ref{reduct1}}
In \cite[Section 5]{FI1}, one begins to exploit the arithmetic in $\ZZ[i]$ and the inequality (B') is reduced to another inequality involving sums over Gaussian integers. In our context, where $a'_n$ are defined in \eqref{anprime}, equation \cite[(5.2), p.967]{FI1} now becomes (for $(m, n) = 1$)
$$a'_{mn} = \underset{\Imag \overline{w}z \equiv a_0\bmod q_1}{\sum_{|w|^2 = m}\sum_{|z|^2 = n}}\zz'(\Real \overline{w}z),$$
where the sum over $z$ is restricted to primary Gaussian integers, i.e. $z$ satisfying
$$z\equiv 1\bmod 2(1+i).$$
Recall from \eqref{zzprime} that the congruence condition $c\equiv c_0\bmod q_2$ is incorporated into the definition of $\zz'$. We now define $\alpha_w := \alpha(|w|^2)$ and $\beta_z := \beta(|z|^2)$ as on \cite[p.967]{FI1}, so that \eqref{Bprime} becomes
\begin{equation}\label{Bzw}
\mathcal{B}'^{\ast}(M, N) = \sumsum_{\substack{(w\overline{w}, z\overline{z}) = 1\\ \Imag \overline{w}z \equiv a_0\bmod q_1}}\alpha_w\beta_z \zz'(\Real \overline{w}z).
\end{equation}
Similarly as in \cite[(5.7), p.967]{FI1}, we split the sum $\mathcal{B}'^{\ast}(M, N)$ into $O(q_1^4)$ sums by restricting the support of $\alpha_w$ to $w$ in a fixed residue class modulo $q_1$ and $\beta_z$ to $z$ in a fixed residue class $z_0$ modulo $64q_1$, such that $z_0 \equiv 1\bmod{2(1+i)}$. Now the residue class of $\Imag \overline{w}z$ modulo $q_1$ is fixed, and so we can eliminate the condition $\Imag \overline{w}z \equiv a_0\bmod q_1$. 
\\\\
We further modify the support of $\beta_z$ as in equation \cite[(5.13), p.969]{FI1}. Let $r(\alpha)$ be a smooth periodic function of period $2\pi$ supported on $\varphi<\alpha\leq \varphi + 2\pi\theta$ (where $\theta$ is as defined in \eqref{theta})  for some $-\pi<\varphi<\pi$ such that $r^{(j)}\ll\theta^{-j}$ for $j = 0, 1, 2$, and let
\begin{equation}\label{betaz}
\beta_z = r(\alpha)p(n)\mu(n)\sum_{c|n,\ c\leq C}\mu(c),
\end{equation}
where $\alpha = \arg z$ and $n = |z|^2$. Recall that by \eqref{taun} and \eqref{coprimePi}, $\beta_z = 0$ if either $\tau(|z|^2)>\tau$ or if $|z|^2$ is not coprime with $\Pi$. We remove the condition $(w\overline{w}, z\overline{z}) = 1$ from \eqref{Bzw} at an acceptable cost as in \cite[(5.10), p.968]{FI1} to get
$$\mathcal{B}'(M, N) = \mathcal{B}'^{\ast}(M, N) + O\left(\left(M^{\frac{1}{4}}N^{\frac{5}{4}}+P^{-1}M^{\frac{3}{4}}N^{\frac{3}{4}}\right)(\log N)^3\right)$$
where
\begin{equation}\label{Bzw2}
\mathcal{B}'(M, N) := \sumsum_{\Imag \overline{w}z \equiv a_0\bmod q_1}\alpha_w\beta_z \zz'(\Real \overline{w}z).
\end{equation}
We then apply Cauchy-Schwarz as in \cite[(5.17), p.970]{FI1} and introduce a smooth radial majorant $f$ supported on the annulus $\frac{1}{2}\sqrt{M}\leq |w|\leq 2\sqrt{M}$ (see \cite[p.970]{FI1}) to get
$$\mathcal{B}'(M, N) \ll M^{\frac{1}{2}}\DD'(M, N)^{\frac{1}{2}},$$
where
$$\DD'(M, N) := \sum_w f(w)\left|\sum_z \beta_z\zz'(\Real \overline{w}z)\right|^2.$$
This eliminates the dependence on $\alpha_w$, so that the sum over $w$ above is free. After inserting a coprimality condition, we arrive at the sum 
\begin{equation}\label{Dast}
\DD'^{\ast}(M, N) := \sumsum_{(z_1, z_2) = 1}\beta_{z_1}\overline{\beta}_{z_2}\mathcal{C}'(z_1, z_2)
\end{equation}
where
$$\mathcal{C}'(z_1, z_2) := \sum_{w}f(w)\zz'(\Real \ow z_1)\zz'(\Real \ow z_2)$$
(see \cite[(5.26), p.972]{FI1} and \cite[(5.27), p.972]{FI1}). The coprimality condition was inserted at the cost 
$$\DD'^{\ast}(M, N) = \DD'(M, N) + O\left(\tau^2(M^{\frac{3}{4}}N^{\frac{3}{4}}+P^{-1}M^{\frac{1}{2}}N^{\frac{3}{2}})(\log MN)^{516}\right)$$
(see \cite[(5.22), p.972]{FI1}). Recall that the congruence condition $c\equiv c_0\bmod q_2$ is hidden in the definition of $\zz'$, while the congruence condition $a\equiv a_0\bmod q_1$ has been removed by restricting the support of $\beta_z$. To prove Lemma \ref{reduct1}, we now have left to prove
\begin{lemma}\label{reduct2}
Let $\eta>0$ and $A>0$, and take $B$ as in \eqref{bigB}. Then there exists $x_0 = x_0(\eta, A)$ such that for all $x\geq x_0$, for all $M$ and $N$ satisfying \eqref{condN} and \eqref{condMN}, and for all $C$ satisfying \eqref{eqC}, we have
\begin{equation}\tag{B''}
\left|\mathcal{D}'^{\ast}(M, N)\right|\leq C \vartheta^2\theta^4M^{\frac{1}{2}}N^{\frac{3}{2}}(\log MN)^{10}.
\end{equation}
\end{lemma}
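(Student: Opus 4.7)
The plan is to adapt the bilinear form analysis of \cite[Sections 5--6]{FI1} to the modified sequence $(a'_n)$, where the key novelty is keeping track of the congruence condition $c \equiv c_0 \pmod{4}$ carried by $\zz'$. The overall strategy is unchanged: evaluate the correlation $\mathcal{C}'(z_1, z_2)$ by Poisson summation over $w \in \ZZ[i]$, extract a main contribution matching the right-hand side of (B''), and bound the remaining off-diagonal terms using estimates for bilinear forms in Jacobi symbols over $\ZZ[i]$.

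First I would open the correlation sum. For coprime Gaussian integers $z_1, z_2$ with $|z_j|^2 \asymp N$, the summand $\zz'(\Real \ow z_1)\zz'(\Real \ow z_2)$ detects $w$ for which $\Real\ow z_j = c_j^2$ with $c_j \in \ZZ$ and $c_j \equiv c_0 \pmod{4}$. Because $c_0 \in \{0, 2\}$, the condition $c_j \equiv c_0 \pmod{4}$ is invariant under $c_j \mapsto -c_j$, so the sign ambiguity is preserved and the geometric parametrization of \cite[Section 5]{FI1} goes through unchanged. Applying Poisson summation in $w$ yields an expansion indexed by dual Gaussian frequencies $h$, with an extra character of conductor dividing $4$ inserted to detect the residue class of $c_j$ modulo $4$.

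Next I would isolate the main term. The diagonal contribution $z_1 = z_2$ combined with the $h = 0$ frequency produces a bound of order $C \vartheta^2 \theta^4 M^{1/2} N^{3/2} (\log MN)^{10}$, matching the target. This is essentially the same calculation as in FI, with the extra character altering the singular constants only by bounded factors depending on $q_1, q_2$, together with a natural density factor of $q_2^{-1}$.

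The technically hardest step is the off-diagonal bound for $h \ne 0$, which as in \cite[Section 6]{FI1} reduces to a bilinear sum in Jacobi symbols over $\ZZ[i]$. To handle the extra mod-$4$ character attached to $z_j$, I would further partition the $z$-variable into $O(1)$ residue classes modulo $64q_1$, compatibly with the restriction already imposed when reducing \eqref{Bzw} to \eqref{Bzw2}; after this partition, the character becomes constant and pulls outside the Jacobi symbol sum. Heath-Brown's double oscillation bound then applies without modification, and the remainder of FI's estimates go through with only constant-factor losses depending on $q_1, q_2$. The main obstacle is exactly this bookkeeping step: verifying that the mod-$4$ character separates cleanly from the Jacobi symbol structure so that no cancellation is lost in the bilinear form bound. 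This separation works precisely because $q_2 = 4$ divides the modulus $64q_1$ already built into the setup; for larger $q_2$, the Gauss sum evaluations appearing in the Poisson expansion would require substantial additional work, which is why the proposition is restricted to $q_2 = 4$.
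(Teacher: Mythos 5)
Your sketch captures a lot of the right scaffolding (Poisson summation in $w$ to expand $\mathcal{C}'(z_1,z_2)$, extracting arithmetic Gauss--Kloosterman-type sums, using the fixed residue class of $z$ modulo $64q_1$, and the observation that $q_2 = 4$ is compatible with the $64q_1$-modulus already forced on $\Delta = \Imag(\oz_1 z_2)$). But it contains a structural misconception about where the hard work actually lies.

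In the paper (following Friedlander--Iwaniec), after the Poisson expansion of $\mathcal{C}'(z_1,z_2)$, the contribution of the nonzero frequencies $(h_1,h_2)\neq(0,0)$ is bounded by relatively soft means: decay of the Fourier integral $F$, and an upper bound for the arithmetic sum $G'(h_1,h_2)$ obtained by inclusion--exclusion from the sums $G''(h_1,h_2;\theta)$ with $\theta\in\{2,4\}$ (Lemma \ref{lem81}), leading to Lemma \ref{prop910}. The truly delicate part is the $h_1=h_2=0$ term $\DD'_0(M,N)$: this is not a ``singular constant'' calculation that matches the target by inspection. On the contrary, if $\beta_z$ were replaced by nonnegative weights, $\DD'_0(M,N)$ would be far larger than the target $\vartheta^2\theta^4 M^{1/2}N^{3/2}(\log MN)^{10}$; the extra saving of $\vartheta^2$ reflects genuine cancellation due to the M\"obius-type oscillation of $\beta_z$. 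Extracting that cancellation is exactly what requires the Hecke character / Jacobi symbol bilinear machinery of FI Sections~10--17, reformulated here as the estimate for $T'(\beta,\xi)$ in Lemma~\ref{reduct3}. Your proposal places that machinery in the $h\neq 0$ range and treats the $h = 0$ term as essentially free, which inverts the roles and would not produce the required bound on $\DD'_0$.

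Two smaller points. First, there is no diagonal contribution $z_1 = z_2$ in $\DD'^{\ast}(M,N)$: the coprimality condition $(z_1,z_2)=1$ is already imposed (see \eqref{Dast}), and the near-diagonal is disposed of in the error of passing from $\DD'$ to $\DD'^{\ast}$, not in the main term. Second, the paper does not insert a character of conductor dividing $4$ to detect $c\equiv c_0\bmod 4$; it restricts $\gamma_1\equiv\gamma_2\equiv c_0\bmod 4$ directly in $G'$ and then removes the restriction by the change of variables $\gamma_i=\theta\omega_i$ together with the identity $G' = G''(\cdot;4)$ when $c_0=0$ and $G' = G''(\cdot;2)-G''(\cdot;4)$ when $c_0=2$. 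The oscillation estimates used are FI's own bilinear bounds for Hecke characters and the primary-symbol, not Heath-Brown's quadratic form estimates. These are not fatal inaccuracies, but combined with the misplacement of the oscillatory input they suggest the argument as proposed would stall precisely at the bound on $\DD'_0$.
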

Note the extra factor of $\theta$ coming from the restriction of support of $\beta$ to a sector of angle $\theta$. 

\subsection{Proof of Lemma \ref{reduct2}}In order to obtain this upper bound, Friedlander and Iwaniec introduce a quantity they call the ``modulus''
$$\Delta = \Delta(z_1, z_2) = \Imag(\oz_1 z_2),$$
which is non-zero whenever $(z_1, z_2) = 1$ and $z_1$ and $z_2$ are odd and primitive. The sum defining $\mathcal{D}'^{\ast}(M, N)$ is split into several different sums depending on the size of the modulus $\Delta$. Different techniques are used to treat each of these sums, but we will manage to avoid going into the details by reducing our sums to those already studied in \cite{FI1}.
\\\\
The Fourier analysis carried out on \cite[p.974]{FI1} depends on the greatest common denominator of $\Delta$ and $q_2$. Using the Poisson summation formula similarly as on \cite[p.974]{FI1}, equation \eqref{Dast} can now be written as
$$\mathcal{D}'^{\ast}(M, N) = \sum_{\delta|q_2}\sumsum_{\substack{(z_1, z_2) = 1\\ (q_2, |\Delta|) = \delta}}\beta_{z_1}\overline{\beta}_{z_2}\mathcal{C}'(z_1, z_2),$$
where 
\begin{equation}\label{cprime}
\mathcal{C}'(z_1, z_2) = (q_2/\delta)^{-2}|z_1z_2|^{-1/2}\sum_{h_1}\sum_{h_2}F\left(\frac{h_1}{|\Delta z_2|^{1/2}q_2/\delta}, \frac{h_2}{|\Delta z_1|^{1/2}q_2/\delta}\right)G'(h_1, h_2);
\end{equation}
the Fourier integral
$$F(u_1, u_2) = \int\int f\left(\frac{z_2}{|z_2|}t_1^2-\frac{z_1}{|z_1|}t_2^2\right)e(u_1 t_1+u_2 t_2)dt_1 dt_2$$
is the same as the one defined in \cite[(6.8), p.974]{FI1} and
$$G'(h_1, h_2) = \frac{1}{|\Delta|}\sumsum_{\substack{\gamma_1, \gamma_2\bmod|\Delta| \\ \gamma_1^2z_2\equiv \gamma_2^2z_1 \bmod{|\Delta|} \\ \gamma_1 \equiv \gamma_2 \equiv c_0 \bmod \delta}}e\left(\frac{\gamma'_1h_1+\gamma'_2h_2}{|\Delta|q_2/\delta}\right)$$
is an arithmetic sum similar to $G(h_1, h_2)$ defined in \cite[(6.10), p.974]{FI1}, but now incorporating the congruence condition $c\equiv c_0\bmod q_2$; here $\gamma'_i$ is the solution (modulo $\frac{|\Delta| q_2}{\delta}$) to the system of congruences 
$$
\begin{cases}
	\gamma'_i \equiv \gamma_i\bmod |\Delta| \\
	\gamma'_i\equiv c_0\bmod q_2.
\end{cases}
$$
Such a solution is guaranteed to exist because $\gamma_1 \equiv \gamma_2 \equiv c_0 \bmod \delta$. Note that similarly as in \cite{FI1}, we omit in the notation the dependence of $F$ and $G'$ on $z_1$ and $z_2$. 
\\\\
The \textit{main term} in the above expansion for $\mathcal{C}'(z_1, z_2)$ comes, as usual, from the terms with $h_1 = h_2 = 0$ in equation \eqref{cprime}. Similarly as in the proof of condition (R) above, we don't need to make any changes in the treatment of the Fourier integral; \cite[Lemma 7.1, p.976]{FI1} and \cite[Lemma 7.2, p.977]{FI1} are still valid, with the implied constants now depending on $q_2$ as well. We recall that \cite[Lemma 7.2, p.977]{FI1} states that for $z_1$ and $z_2$ in the support of $\beta_z$ we have
\begin{equation}\label{F0}
F_0(z_1, z_2):= F(0, 0) = 2\hat{f}(0)\log 2|z_1z_2/\Delta| + O(\Delta^2M^{\frac{1}{2}}N^{-2}\log N).
\end{equation}
We now have to give an upper bound for $G'(h_1, h_2)$ similar to the bound given in \cite[Lemma 8.1, p.978]{FI1}, as well as give an exact formula for 
$$G'_0(z_1, z_2) := G'(0, 0)$$
similar to the one in \cite[Lemma 8.4, p.980]{FI1}. This is where we now specialize to the case 
$$q_2 = 4 \text{ and } c_0\in\{0, 2\}.$$
Recall that we restricted the support of $\beta_z$ to $z$ in a fixed congruence class modulo $64q_1$. Hence $z_1\equiv z_2 \bmod 64$, so that $\Delta = \Imag(\oz_1 z_2) \equiv 0\bmod 64$. This significantly simplifies our arguments since now $\delta = (4, |\Delta|) = 4$.
\\\\
The arithmetic sum $G'(h_1, h_2)$ now simplifies to
$$G'(h_1, h_2) = \frac{1}{|\Delta|}\sumsum_{\substack{\gamma_1, \gamma_2\bmod |\Delta| \\ \gamma_1^2z_2\equiv \gamma_2^2z_1 \bmod{|\Delta|} \\ \gamma_1 \equiv \gamma_2 \equiv c_0 \bmod 4}}e\left(\frac{\gamma_1h_1+\gamma_2h_2}{|\Delta|}\right).$$
We first prove a lemma analogous to \cite[Lemma 8.1, p.978]{FI1}. 
\begin{lemma}\label{lem81}
Fix $\theta \in \{2, 4\}$ and let
$$G''(h_1, h_2; \theta) = \frac{1}{|\Delta|}\sumsum_{\substack{\gamma_1, \gamma_2\bmod |\Delta| \\ \gamma_1^2z_2\equiv \gamma_2^2z_1 \bmod{|\Delta|} \\ \gamma_1 \equiv \gamma_2 \equiv 0 \bmod \theta}}e\left(\frac{\gamma_1h_1+\gamma_2h_2}{|\Delta|}\right).$$
Then 
\begin{equation}\label{res1}
\left|G''(h_1, h_2; \theta) \right|\leq 16\tau_3(\Delta)|\Delta|^{-1}(z_1h_1^2-z_2h_2^2, \Delta).
\end{equation}
\end{lemma}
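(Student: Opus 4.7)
The key idea is to detect the congruence condition $\gamma_i \equiv 0 \bmod \theta$ by additive characters, thereby expressing $G''(h_1, h_2; \theta)$ as an average of shifts of the arithmetic sum $G(h_1, h_2)$ from \cite[(6.10), p.974]{FI1}, so that the bound in \cite[Lemma 8.1, p.978]{FI1} applies directly to each term. Throughout, I would exploit the fact that $64 \mid |\Delta|$ (which holds because $\beta_z$ was restricted to a single residue class modulo $64 q_1$): for $\theta \in \{2,4\}$ this guarantees that $M := |\Delta|/\theta$ is an integer and that $\theta^2 \mid |\Delta|$, whence $M^2 \equiv 0 \bmod |\Delta|$.

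First, substituting the identity
$$\mathbf{1}_{\theta \mid \gamma_i} = \frac{1}{\theta} \sum_{a_i \bmod \theta} e\!\left(\frac{a_i M \gamma_i}{|\Delta|}\right)$$
for $i = 1,2$ into the definition of $G''$ and absorbing each character into the exponential yields the identity
$$G''(h_1, h_2; \theta) = \frac{1}{\theta^2} \sum_{a_1, a_2 \bmod \theta} G(h_1 + a_1 M,\ h_2 + a_2 M).$$
Applying \cite[Lemma 8.1]{FI1} bounds $|G''(h_1, h_2; \theta)|$ by $\tau_3(|\Delta|)|\Delta|^{-1}$ times the average over $(a_1, a_2)$ of the divisor
$$\bigl( z_1 (h_1 + a_1 M)^2 - z_2 (h_2 + a_2 M)^2,\ |\Delta| \bigr).$$

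Using $M^2 \equiv 0 \bmod |\Delta|$, the argument of this gcd simplifies modulo $|\Delta|$ to $z_1 h_1^2 - z_2 h_2^2 + 2M(z_1 h_1 a_1 - z_2 h_2 a_2)$. The main step is then to show that, uniformly in $(a_1, a_2)$, this gcd exceeds $D := (z_1 h_1^2 - z_2 h_2^2,\ |\Delta|)$ by at most a bounded factor. A prime-by-prime valuation check suffices: for odd $p$ one has $v_p(2M(\cdots)) \geq v_p(|\Delta|)$, so the $p$-part of the gcd is unchanged, equal to the $p$-part of $D$; for $p = 2$, a short case analysis on $v_2(z_1 h_1^2 - z_2 h_2^2)$ relative to $v_2(|\Delta|)$ shows that the 2-part of the gcd can exceed the 2-part of $D$ by at most a factor of $2$, and only when $\theta = 4$ and $v_2(z_1 h_1^2 - z_2 h_2^2) = v_2(|\Delta|) - 1$. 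Summing $\theta^2$ terms and dividing by $\theta^2$ then gives $|G''(h_1, h_2; \theta)| \leq 2 \tau_3(|\Delta|) |\Delta|^{-1} D$, which is stronger than the stated bound with constant $16$.

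I expect the $2$-adic case analysis to be the only mildly delicate step; everything else is a formal Fourier rearrangement and a direct appeal to \cite[Lemma 8.1]{FI1}. The looser constant $16$ in the statement presumably reflects a willingness to absorb any further $O(1)$ losses that arise when this bound is later used in treating the case $c_0 = 2$ via inclusion–exclusion between the $\theta = 2$ and $\theta = 4$ sums.
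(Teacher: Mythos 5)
Your proof is correct, and it takes a genuinely different route from the paper's. The paper re-runs the machinery of \cite[Lemma 8.1]{FI1} from scratch with the extra factors of $\theta$ threaded through: substituting $\gamma_i = \theta\omega_i$, writing $\Delta/\theta = \theta\Delta_1\Delta_2^2$, extracting $(\omega_i^2, \Delta/\theta^2) = d_1 d_2^2$, noting the inner sum vanishes unless $\theta d_2 \mid (h_1, h_2)$, reducing to Ramanujan sums, and finally invoking the argument of \cite[p.978]{FI1}; the constant $16$ is there to absorb the $\theta^2 \leq 16$ that falls out of this chain. You instead detect the condition $\theta \mid \gamma_i$ by additive characters, expressing $G''$ as a $\theta^{-2}$-weighted average of the $\theta^2$ shifts $G(h_1 + a_1 M, h_2 + a_2 M)$ with $M = |\Delta|/\theta$, and then apply \cite[Lemma 8.1]{FI1} as a black box to each shift. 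The observation that makes this work is exactly the one you isolate: since $\theta^2 \mid |\Delta|$ (because $64 \mid |\Delta|$), one has $M^2 \equiv 0 \pmod{|\Delta|}$, so the shifted gcd argument differs from $z_1 h_1^2 - z_2 h_2^2$ only by the linear term $2M(z_1 h_1 a_1 - z_2 h_2 a_2)$, whose $2$-adic valuation is at least $v_2(|\Delta|) - 1$; your valuation check (no change at odd $p$, a loss of at most one factor of $2$ at $p = 2$ when $\theta = 4$) is correct. The upshot is a more conceptual proof with a sharper constant, at the cost of a short $2$-adic case analysis that is no worse than what is implicitly buried in the paper's Ramanujan-sum computation. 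One point worth flagging: the lemma as stated does not carry the hypothesis $64 \mid |\Delta|$, and your argument leans on $\theta^2 \mid |\Delta|$; but the paper's own proof quietly uses the same thing (its decomposition $\Delta/\theta = \theta\Delta_1\Delta_2^2$ already presumes $\theta^2 \mid \Delta$), so this is a shared feature of the ambient setting rather than a gap in your argument, though it would be cleaner to state the hypothesis explicitly when invoking the lemma.
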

Introducing a change of variables $\gamma_1 = \theta\omega_1$ and $\gamma_2 = \theta\omega_2$, we get
$$G''(h_1, h_2; \theta) = \frac{1}{|\Delta|}\sumsum_{\substack{\omega_1, \omega_2\bmod |\Delta|/\theta \\ \omega_1^2z_2\equiv \omega_2^2z_1 \bmod{|\Delta|/\theta^2}}}e\left(\frac{\omega_1h_1+\omega_2h_2}{|\Delta|/\theta}\right).$$
Proceeding in a similar fashion as on \cite[p.977-978]{FI1}, we write
$$\Delta/\theta = \theta\Delta_1(\Delta_2)^2,$$
with $\Delta_1$ squarefree. The condition $\omega_1^2z_2\equiv \omega_2^2z_1 \bmod{|\Delta|/\theta^2}$ implies that $(\omega_1^2, \Delta/\theta^2) = (\omega_2^2, \Delta/\theta^2)$, so we can write 
$$(\omega_1^2, \Delta/\theta^2) = (\omega_2^2, \Delta/\theta^2) = d_1d_2^2$$
with $d_1$ squarefree. Then $d_1|\Delta_1$, $d_2|\Delta_2$, $(d_1, \Delta_2/d_2) = 1$, and we can make a change of variables $\omega_i = d_1d_2\eta_i$, there $\eta_i$ runs over the residue classes modulo $|\Delta|/\theta d_1d_2$ and coprime with $|\Delta|/\theta^2 d_1d_2^2$. Setting $b_1 = \Delta_1/d_1$ and $b_2 = \Delta_2/d_2$, the analogue of the equation on top of \cite[p.978]{FI1} becomes
$$G''(h_1, h_2; \theta) = \frac{1}{|\Delta|} \sumsum_{\substack{b_1d_1 = |\Delta_1| \\ b_2d_2 = \Delta_2 \\ (d_1, b_2) = 1}}\sumsum_{\substack{\eta_1, \eta_2\bmod \theta b_1b_2^2d_2 \\ (\eta_1\eta_2, b_1b_2) = 1 \\ \eta_1^2z_2\equiv \eta_2^2z_1\bmod b_1b_2^2}}e((\eta_1h_1+\eta_2h_2)/\theta b_1b_2^2d_2)$$
The innermost sum vanishes unless $h_1\equiv h_2\equiv 0\bmod \theta d_2$, so $G''(h_1, h_2)$ is equal to 
$$\frac{1}{|\Delta|}\sum_{\substack{b_1d_1 = |\Delta_1|\\ (d_1, b_2) = 1}}\sum_{\substack{b_2d_2 = \Delta_2\\ \theta d_2|(h_1, h_2)}}\theta^2d_2^2\sumsum_{\substack{\eta_1, \eta_2\bmod b_1b_2^2 \\ (\eta_1\eta_2, b_1b_2) = 1 \\ \eta_1^2z_2\equiv \eta_2^2z_1\bmod b_1b_2^2}}e((\eta_1h_1+\eta_2h_2)/\theta b_1b_2^2d_2).$$ 
Performing the change of variables $\eta_2 = \omega\eta_1$, the analogue of equation \cite[(8.3), p.978]{FI1} becomes 
$$\frac{1}{|\Delta|}\sum_{\substack{b_1d_1 = |\Delta_1|\\ (d_1, b_2) = 1}}\sum_{\substack{b_2d_2 = \Delta_2\\ \theta d_2|(h_1, h_2)}}\theta^2d_2^2\sum_{\omega\equiv z_2/z_1\bmod b_1b_2^2}R((h_1+\omega h_2)(\theta d_2)^{-1}; b_1b_2^2),$$
where $R(h; b)$ is the classical Ramanujan sum defined on \cite[p.978]{FI1}. Now the same argument as on \cite[p.978]{FI1} yields the desired upper bound \eqref{res1}.$\Box$ 
\\\\
We now turn our attention back to $G'(h_1, h_2)$. In case $c_0 = 0$, we're in the case of Lemma \ref{lem81} and
$$\left|G'(h_1, h_2)\right| = \left|G''(h_1, h_2; 4)\right| \leq 16\tau_3(\Delta)|\Delta|^{-1}(z_1h_1^2-z_2h_2^2, \Delta).$$
If, on the other hand, $c_0 = 2$, we note that $G'(h_1, h_2) = G''(h_1, h_2; 2) - G''(h_1, h_2; 4)$ since $\Delta\equiv 0\bmod 16$. Hence
$$\left|G'(h_1, h_2)\right|\leq 32\tau_3(\Delta)|\Delta|^{-1}(z_1h_1^2-z_2h_2^2, \Delta).$$
The same arguments as those in Section 9 of \cite{FI1} now suffice to show that the main term in the Fourier expansion indeed comes from $h_1 = h_2 = 0$. We recall the result from \cite[(9.10), p.983]{FI1} here. Let 
$$\mathcal{D}'_0(M, N) := \sumsum_{\substack{(z_1, z_2) = 1}}\beta_{z_1}\overline{\beta}_{z_2}\mathcal{C}'_0(z_1, z_2),$$
where 
\begin{equation}\label{czeroprime}
\mathcal{C}'_0(z_1, z_2) = |z_1z_2|^{-1/2}F_0(z_1, z_2)G'_0(z_1, z_2).
\end{equation}
\begin{lemma}\label{prop910}
Let $\eta>0$ and $A>0$, and take $B$ as in \eqref{bigB}. Then there exists $x_0 = x_0(\eta, A)$ such that for all $x\geq x_0$, for all $M$ and $N$ satisfying \eqref{condN} and \eqref{condMN}, and for all $C$ satisfying \eqref{eqC}, we have 
$$\left|\DD'^{\ast}(M, N) - \DD'_0(M, N)\right| \leq \vartheta^{-1}\tau^2N^2(\log N)^{\eta^{-1/\eta}},$$
where $\tau$ is defined in \eqref{taun}. 
\end{lemma}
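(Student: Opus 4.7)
The plan is to mimic the argument of \cite[Section 9, p.981--983]{FI1} essentially verbatim, exploiting the fact that the modifications forced on us by the congruence condition $c\equiv c_0\bmod 4$ affect only the multiplicative constants in the key estimates. Concretely, one writes
$$\DD'^{\ast}(M,N)-\DD'_0(M,N)=\sumsum_{(z_1,z_2)=1}\beta_{z_1}\overline{\beta}_{z_2}\bigl(\mathcal{C}'(z_1,z_2)-\mathcal{C}'_0(z_1,z_2)\bigr),$$
and inserts the Fourier expansion \eqref{cprime} so that the difference is expressed as a double sum over $(h_1,h_2)\ne(0,0)$ of the product $F(u_1,u_2)G'(h_1,h_2)$, with $u_i$ as in \eqref{cprime}. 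The integers $z_1,z_2$ in the support of $\beta_z$ satisfy $\Delta=\Imag(\oz_1 z_2)\equiv 0\bmod 64$, so $\delta=(4,|\Delta|)=4$ and the Fourier weights match the standard situation up to the bookkeeping constants.

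First I would split the outer sum according to $|\Delta|$ into dyadic ranges (separating the diagonal-ish contributions where $|\Delta|$ is very small, using the standard trick of truncating $|\Delta|\leq N^2/P$ on one side and applying divisor bounds on $\beta_z$ on the other). Then, on each range, I would bound $|G'(h_1,h_2)|$ by the estimate just proved: for $c_0\in\{0,2\}$ we have
$$|G'(h_1,h_2)|\leq 32\,\tau_3(\Delta)|\Delta|^{-1}(z_1h_1^2-z_2h_2^2,\Delta).$$
For the $F$-factor I would use the integration-by-parts bounds of \cite[Lemma 7.1]{FI1}, which give decay of the form $F(u_1,u_2)\ll M(1+|u_1|+|u_2|)^{-2}\log^{O(1)}(MN)$ in each of the variables, together with the refined asymptotic \eqref{F0} for the boundary terms. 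Summing over $(h_1,h_2)\ne(0,0)$ against these decay estimates is then exactly the calculation laid out on \cite[p.981--983]{FI1}; the sums over $h_i$ are convergent precisely because of the $(u_1^2+u_2^2)^{-1}$-type decay of $F$, and the sum over $(z_1,z_2)$ is handled by the divisor-bounded support condition \eqref{taun} and the greatest-common-divisor factor $(z_1h_1^2-z_2h_2^2,\Delta)$, which contributes a logarithmic factor after Rankin-type dissection.

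The only place any care is needed is the bookkeeping: our $\beta_z$ is restricted to a sector of angle $\theta$ and to a single residue class modulo $64q_1$, so the number of available $z$'s is reduced by a factor of roughly $\theta\cdot q_1^{-2}$ compared to the setting of \cite{FI1}; but since the target bound of the lemma also lacks a $\theta$-factor (we are bounding a difference, not a main term), this reduction only helps and is absorbed into the constants that were declared to depend on $q_1,q_2$ at the end of Section~4.2. No genuinely new estimate is required.

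The main obstacle I anticipate is purely notational rather than conceptual: one must verify that each intermediate inequality of \cite[Section~9]{FI1} remains valid after our change of the $G$-sum to $G'$, and in particular that the $h_1=0,h_2\ne 0$ and $h_1\ne 0,h_2=0$ boundary sums (which in \cite{FI1} rely on a Gauss-sum factorization of $G(h_1,0)$) admit the same treatment. This is ensured because $G'(h_1,0)=G''(h_1,0;2)-G''(h_1,0;4)$ (or simply $G''(h_1,0;4)$ when $c_0=0$) and both summands factor through Ramanujan sums in exactly the way used on \cite[p.982]{FI1}, so the bound $|G'(h_1,0)|\ll\tau_3(\Delta)|\Delta|^{-1}(z_1h_1^2,\Delta)$ is preserved. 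Assembling all these pieces yields the stated bound, completing the proof of Lemma \ref{prop910}.
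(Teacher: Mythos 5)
Your proposal is correct and follows the same route as the paper, which itself simply invokes \cite[Section 9]{FI1} after noting that the new bound $|G'(h_1,h_2)|\leq 32\,\tau_3(\Delta)|\Delta|^{-1}(z_1h_1^2-z_2h_2^2,\Delta)$ matches the shape of the original bound on $G(h_1,h_2)$ up to a constant. You supply more explicit detail than the paper (which treats this as an immediate consequence of the $G'$-estimate and the unchanged $F$-analysis), but the underlying argument is the same.
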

It now remains to estimate $\DD'_0(M, N)$. We turn to obtaining an exact formula for $G'_0(z_1, z_2)$. Recall, from top of \cite[p.979]{FI1}, that 
$$G_0(z_1, z_2) := \frac{1}{|\Delta|}\sumsum_{\substack{\gamma_1, \gamma_2\bmod|\Delta| \\ \gamma_1^2z_2\equiv \gamma_2^2z_1 \bmod{|\Delta|}}}1 = N(z_2/z_1; |\Delta|)/|\Delta|,$$
where $N(a; r)$ denotes the number of solutions $(\gamma_1, \gamma_2)$ modulo $r$ to 
$$a\gamma_1^2\equiv \gamma_2^2\bmod r.$$
Similarly, 
$$G'_0(z_1, z_2)= N'(z_2/z_1; |\Delta|)/|\Delta|,$$
where $N'(a; r)$ is the number of solutions $(\gamma_1, \gamma_2)$ modulo $r$ to the congruences
$$
\begin{cases} 
a\gamma_1^2\equiv \gamma_2^2\bmod r\\
\gamma_1\equiv\gamma_2\equiv c_0\bmod 4.
\end{cases}
$$
Since $z_2/z_1\equiv 1\bmod 64$ and $\Delta\equiv 0\bmod 64$, we are only concerned with the case $a\equiv 1\bmod 64$ and $r\equiv 0\bmod 64$. 

\subsection{Computation of $N'(a; r)/r$}

\subsubsection{Case $c_0 = 0$}
First let us compute $N'(a; r)/r$ when $c_0 = 0$. Since $\gamma_1\equiv\gamma_2\equiv 0\bmod 4$, we can make a change of variables $\gamma_1 = 4\omega_1$ and $\gamma_2 = 4\omega_2$, where now $\omega_i$ are congruence classes modulo $r/4$, to find that $N'(a; r) = 16N(a; r/16)$, i.e. 
$$N'(a; r)/r = N(a; r/16)/(r/16).$$
This leads to a formula of type \cite[(8.16), p.980]{FI1}. If $16\cdot 2^{\nu}$ with $\nu\geq 1$ is the exact power of $2$ dividing $\Delta$, we get
$$G'_0(z_1, z_2) = \nu\sum_{\substack{16d|\Delta \\ d\text{ odd}}}\frac{\varphi(d)}{d}\left(\frac{z_2/z_1}{d}\right).$$
Since $\Delta\equiv 0\bmod 64$, we are only interested in the case $\nu \geq 2$, where this becomes
\begin{equation}\label{Gprime00}
G'_0(z_1, z_2) = 2\sum_{64d|\Delta}\frac{\varphi(d)}{d}\left(\frac{z_2/z_1}{d}\right),
\end{equation}
by the same reasoning as in \cite[Lemma 8.4, p.980]{FI1}. 

\subsubsection{Case $c_0 = 2$}
When $c_0 = 2$ and $4|r$, we can make a change of variables $\gamma_1 = 2\omega_1$ and $\gamma_2 = 2\omega_2$ so that $N'(a; r)$ is $4$ times the number of solutions $(\omega_1, \omega_2)$ modulo $r/4$ to the system of congruences 
$$
\begin{cases}
\omega_1\equiv \omega_2\equiv 1\bmod 2 \\
a\omega_1^2\equiv \omega_2^2\bmod r/4.
\end{cases}
$$
When $16|r$, we must subtract from $4N(a; r/4)$ those solutions with $\omega_1\equiv \omega_2\equiv 0\bmod 2$. This gives $N'(a; r) = 4N(a; r/4) - 16N(a; r/16)$, i.e.
$$\frac{N'(a; r)}{r} = \frac{N(a; r/4)}{r/4} - \frac{N(a; r/16)}{r/16}.$$
Hence if $16\cdot 2^{\nu}$ with $\nu\geq 2$ is the exact power of $2$ dividing $\Delta$, we get
\begin{equation}\label{Gprime02}
G'_0(z_1, z_2) = 2\sum_{16d|\Delta}\frac{\varphi(d)}{d}\left(\frac{z_2/z_1}{d}\right) - 2\sum_{64d|\Delta}\frac{\varphi(d)}{d}\left(\frac{z_2/z_1}{d}\right),
\end{equation}
which is the analogue of \eqref{Gprime00}. 
\subsection{End of proof of of Lemma \ref{reduct2}}
We now turn back to estimating $\DD'_0(M, N)$. As in \cite[(10.4), p.985]{FI1}, we can use \eqref{F0} to write
$$\DD'_0(M, N) = 2\hat{f}(0)N^{\frac{1}{2}}T'(\beta)+O\left((\tau^{-1}+\theta)Y'(\beta)M^{\frac{1}{2}}N^{-\frac{1}{2}}\log N\right)$$
where
$$T'(\beta) := \sumsum_{(z_1, z_2) = 1}\beta_{z_1}\overline{\beta}_{z_2}G'_0(z_1, z_2)\log 2|z_1z_2/\Delta|$$
and
$$Y'(\beta) := \sumsum_{(z_1, z_2) = 1}|\beta_{z_1}\overline{\beta}_{z_2}|\tau(|z_1|^2)\tau(|z_2|^2)\tau_3(\Delta).$$
Similarly as in \cite[Lemma 10.1, p.985]{FI1}, we can bound $Y'(\beta)$ by
$$Y'(\beta)\ll \theta^4N^2(\log N)^{2^{19}},$$
so that we are left with estimating the sum  $T'(\beta)$. In each of the cases $c_0 = 0$ and $c_0 = 2$, we can use the formula for $G'_0(z_1, z_2)$ and $F_0(z_1, z_2)$ to write $T'(\beta)$ as a sum similar to \cite[(10.13), p.986]{FI1}. If we define
$$T'(\beta, \xi) := 2\sum_{d}\frac{\varphi(d)}{d}\sumsum_{\substack{(z_1, z_2) = 1\\ \Delta(z_1, z_2)\equiv 0\bmod \xi d}}\beta_{z_1}\overline{\beta}_{z_2}\left(\frac{z_2/z_1}{d}\right)\log 2|z_1z_2/\Delta|,$$
then
$$T'(\beta) = 
\begin{cases}
T'(\beta, 64) & \text{ if }c_0 = 0 \\
T'(\beta, 16)-T'(\beta, 64) & \text{ if }c_0 = 2
\end{cases}
$$
Lemma \ref{reduct2} now follows from this analogue of \cite[Proposition 10.2, p.986]{FI1}:
\begin{lemma}\label{reduct3}
Fix $\xi \in\{16, 64\}$. Let $\eta>0$, $A>0$, and $\sigma>0$, and take $B$ as in \eqref{bigB}. Then there exists $x_0 = x_0(\eta, A)$ and $C_0 = C_0(\eta, A, \sigma)>0$ such that for all $x\geq x_0$, for all $N$ satisfying \eqref{condN}, and for all $C$ satisfying \eqref{eqC}, we have
$$T'(\beta, \xi) \leq C_0 N^2(\log N)^{-\sigma}+P^{-1}N^2\log N,$$
where $P$ is any number in the range \eqref{Prange}. 
\end{lemma}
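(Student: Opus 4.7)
The plan is to reduce $T'(\beta, \xi)$ to a sum of exactly the same shape as the quantity bounded in [FI1, Proposition 10.2], and then invoke that result (with the bookkeeping needed to absorb the additional congruence restrictions built into our $\beta_z$).

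First I would observe that the Jacobi symbol $\left(\frac{z_2/z_1}{d}\right)$ forces us to restrict to odd $d$, so $\gcd(d,\xi)=1$ since $\xi\in\{16,64\}$ is a power of $2$. Moreover our support restriction of $\beta_z$ to a fixed residue class modulo $64q_1$ forces $z_1\equiv z_2\bmod 64$, so $\Delta(z_1,z_2)=\Imag(\oz_1 z_2)\equiv 0\bmod 64$ for every pair appearing in the sum, and in particular $\xi\mid\Delta$ automatically. Hence the constraint $\xi d\mid\Delta$ is equivalent to $d\mid\Delta$, and
$$T'(\beta,\xi)=2\sum_{d\text{ odd}}\frac{\varphi(d)}{d}\sumsum_{\substack{(z_1,z_2)=1\\ d\mid\Delta}}\beta_{z_1}\overline{\beta}_{z_2}\left(\frac{z_2/z_1}{d}\right)\log 2|z_1z_2/\Delta|.$$
This is, up to a harmless constant, of exactly the form of the sum $T(\beta)$ appearing in Section 10 of \cite{FI1}.

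Next I would verify that the differences between our $\beta_z$ and the one in FI1 do not affect the argument. The differences are: (i) our $\beta_z$ is supported on a fixed residue class modulo $64q_1$ rather than only modulo $64$; (ii) we impose $(|z|^2,\Pi)=1$ and $\tau(|z|^2)\leq \tau$. Since the Jacobi symbol depends only on $z_1,z_2$ modulo $d$, which is odd and therefore coprime to $q_1$, the additional modulus $q_1$ decouples from $d$ under Chinese Remainder. All the Cauchy--Schwarz manipulations, large-sieve inputs, and character sum estimates in FI1 go through for our $\beta_z$ at the cost of inserting constants depending only on $q_1$, which is permissible under our convention. The coprimality $(|z|^2,\Pi)=1$ and the divisor bound are preserved throughout and in fact simplify parts of the bookkeeping.

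Granting this, the rest of the proof follows the structure of \cite[\S 10]{FI1}: split the $d$-sum at a threshold $D_0=(\log N)^{2\sigma}$; for $d\leq D_0$ use a trivial bound combined with the smoothness and angular localization of $\beta_z$ to gain the factor $\theta^4(\log N)^{-\sigma}$; for $d>D_0$ expand $\beta_z=p(z)\mu(|z|^2)\sum_{c\mid|z|^2,\,c\leq C}\mu(c)$, apply Cauchy--Schwarz to separate the inner bilinear form, and exploit the cancellation in character sums over Gaussian integers (as in FI1) to obtain a saving of $(\log N)^{-\sigma}$; finally, the restriction $(n,\Pi)=1$ contributes the residual term $P^{-1}N^2\log N$ by the same sieve estimate as in FI1. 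Assembling these yields
$$T'(\beta,\xi)\leq C_0 N^2(\log N)^{-\sigma}+P^{-1}N^2\log N,$$
which is the claimed bound.

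The main obstacle is genuinely not conceptual but bookkeeping: one must carry the extra congruence modulo $64q_1$, the sector restriction of angle $\theta$, and the coprimality to $\Pi$ through every step of FI1's intricate Sections 9 and 10, verifying at each stage that the resulting constants remain $O_{q_1,q_2}(1)$ and that no cancellation is destroyed. Because $\gcd(d,\xi)=\gcd(d,q_1)=1$ for all $d$ contributing to $T'(\beta,\xi)$, this verification is routine, and the full FI1 argument transfers intact.
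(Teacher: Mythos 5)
Your opening reduction is incorrect, and this is the main gap. You assert that the Jacobi symbol $\left(\frac{z_2/z_1}{d}\right)$ forces $d$ to be odd, hence $\gcd(d,\xi)=1$, and therefore (since $\xi\mid 64\mid\Delta$ automatically) the condition $\xi d\mid\Delta$ is equivalent to $d\mid\Delta$. That would make $T'(\beta,16)=T'(\beta,64)$, since both would reduce to the same sum over odd $d\mid\Delta$. But the paper's identity $T'(\beta)=T'(\beta,16)-T'(\beta,64)$ for $c_0=2$ would then give $T'(\beta)=0$ identically, which is not what happens and not what Lemma~\ref{reduct2} via Lemma~\ref{reduct3} is asserting. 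In fact the sum over $d$ in $T'(\beta,\xi)$ runs over \emph{all} positive integers (the symbol is a Kronecker-type symbol, trivial at $2$ because $z_2/z_1\equiv 1\bmod 8$). One can check this is consistent with the earlier derivation: writing $\Delta=2^{\nu+4}\Delta_{\mathrm{odd}}$ with $\nu\geq 2$ and $D=2^k d$ ($d$ odd), the condition $64D\mid\Delta$ amounts to $k\leq\nu-2$ and $d\mid\Delta_{\mathrm{odd}}$, and summing the factor $\varphi(2^k d)/(2^k d)$ over $0\leq k\leq\nu-2$ yields $\tfrac{\nu}{2}\cdot\varphi(d)/d$, recovering $2\sum_{64D\mid\Delta}(\cdots)=\nu\sum_{d\mid\Delta_{\mathrm{odd}}}(\cdots)$, exactly as in~\eqref{Gprime00}. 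The even $D$'s are therefore essential, and it is precisely the $D$'s with $2$-adic valuation $\nu-1$ or $\nu$ that distinguish $T'(\beta,16)$ from $T'(\beta,64)$. So $T'(\beta,\xi)$ is \emph{not} literally FI1's $T(\beta)$, and your proposed reduction does not go through.

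Because of this, the rest of your sketch rests on a premise that is false. What the paper actually does is keep the modulus $\xi d$ (not $d$) throughout and track the consequences in \cite[Sections 15--17]{FI1}: for small and large moduli the character sums now run over multiplicative characters of $(\ZZ[i]/\xi d\ZZ[i])^{\times}$ and $(\ZZ[i]/\xi bd\ZZ[i])^{\times}$ instead of $(\ZZ[i]/4d\ZZ[i])^{\times}$ and $(\ZZ[i]/4bd\ZZ[i])^{\times}$, the support restriction of $\beta_z$ to a residue class modulo $64q_1$ is detected by inserting multiplicative characters modulo $64q_1$, and one uses that the relevant bounds in FI1 (e.g.\ \cite[Lemma 16.2, Proposition 17.2]{FI1}) depend on the character only through its modulus, which now differs from FI1's by a fixed constant that can be absorbed. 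The medium-modulus range is handled trivially and is insensitive to these modifications. Your description of the decomposition (a single threshold $D_0=(\log N)^{2\sigma}$, trivial bound below it) also does not match the small/medium/large split that FI1 and the paper actually use; in the small-modulus range one needs genuine cancellation from Hecke $L$-functions, not a trivial bound with the angular localization.
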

We recall that $N$ and $P$ appear as parameters restricting the support of $\beta_z$; see \eqref{betaz}.

\subsection{Proof of Lemma \ref{reduct3}: oscillations of characters and symbols}
Although complicated, the proof of \cite[Proposition 10.2]{FI1} generalizes directly to the proof of Lemma \ref{reduct3}. One can check in \cite[Sections 15-17]{FI1} that the same arguments are valid when $\xi = 16$ or $64$ instead of $\xi = 4$. For instance, on \cite[p.1005]{FI1} and \cite[p.1015]{FI1}, one now sums over multiplicative characters of the groups $(\ZZ[i]/\xi d\ZZ[i])^{\times}$ and $(\ZZ[i]/\xi bd\ZZ[i])^{\times}$, respectively. 
\\\\
Moreover, the restriction on the support of $\beta_z$ to $z$ in a fixed primary congruence class modulo $64q_1$ (where $q_1$ is as in \eqref{anprime}) as opposed to modulo $8$ is handled in the same way as in \cite[Sections 15-17]{FI1}. For sums over medium-size moduli, the estimation of $\beta_z$ is trivial and so the restriction on the support is irrelevant (see bottom of \cite[p.1003]{FI1}). For sums over small moduli, i.e. $d$ of size at most a large power of $\log N$, the key sum to bound from above is the character sum
\begin{equation}\label{SKB}
S_{\chi}^k(\beta) = \sum_{z}\beta_z\chi(z)\left(\frac{z}{|z|}\right)^k,
\end{equation}
where $\chi$ is a multiplicative character of the group $(\ZZ[i]/\xi d\ZZ[i])^{\times}$ (see \cite[(16.14), p.1005]{FI1}). The restriction on the support of $\beta_z$ can be detected by multiplicative characters modulo $64q_1$, so that we can simply transform $\chi$ into a character for the group $(\ZZ[i]/64 q_1 d\ZZ[i])^{\times}$. The sum \eqref{SKB} is bounded by studying the Hecke $L$-functions
$$L(s, \psi) = \sum_{\aaa}\psi(\aaa)(N\aaa)^{-s},$$
where the sum ranges over the non-zero odd ideals $\aaa$ of $\ZZ[i]$ and 
$$\psi(\aaa):=\chi(z)\left(\frac{z}{|z|}\right)^k$$
where $z$ is the unique primary Gaussian integer which generates $\aaa$. The dependence on $\chi$ of the bound given for $S_{\chi}^k(\beta)$ is only through the modulus of $\chi$ (see \cite[Lemma 16.2, p.1012]{FI1}) and this modulus is different from $4d$ by a fixed constant. Similarly, for the sums over large moduli, the  key sum to bound from above is the character sum
\begin{equation}\label{SKBP}
S_{\chi}^k(\beta') = \sum_{z}\beta'_z\chi(z)\left(\frac{z}{|z|}\right)^k,
\end{equation}
where $\chi$ is a multiplicative character of the group $(\ZZ[i]/\xi bd\ZZ[i])^{\times}$ (where $b$ is an integer and $d$ is again bounded by a large power of $\log N$) but $\beta'_z$ is now 
$$\beta'_z = i^{\frac{r-1}{2}}\left(\frac{s}{|r|}\right)\beta_z$$
if $z = r+is$ (see \cite[(17.8), p.1014]{FI1} and \cite[(17.12), p.1015]{FI1}). Again, the restriction on the support of $\beta_z$ (and hence also $\beta'_z$) can be detected by multiplicative characters modulo $64q_1$, so that we can transform $\chi$ into a character for the group $(\ZZ[i]/64q_1 bd\ZZ[i])^{\times}$. Cancellation in the sum \eqref{SKBP} is now achieved due to the oscillation of the symbol
$$i^{\frac{r-1}{2}}\left(\frac{s}{|r|}\right)$$
as $z$ varies over primary Gaussian integers, but again the dependence on $\chi$ of the bound given for \eqref{SKBP} is only through the modulus of $\chi$ (see \cite[Proposition 17.2, p.1016]{FI1}) and this modulus is again different from $4bd$ by a fixed constant. This shows that Lemma \ref{reduct3} follows from \cite[Proposition 10.2]{FI1} and hence Proposition \ref{prop41} is proved.

\section{Acknowledgements}
I would like to give special thanks to my advisors \a'{E}tienne Fouvry and Peter Stevenhagen for their unceasing support and useful advice as well as for helping me resolve numerous issues that arose during the course of this research. I would also like to thank Christian Elscholtz, Jan-Hendrik Evertse, Florent Jouve, and Hendrik Lenstra for useful discussions.    

\bibliographystyle{plain}
\bibliography{sixteenrankreferences}
\end{document}